\providecommand\@dotsep{5}
\def\listtodoname{List of Todos}
\def\listoftodos{\@starttoc{tdo}\listtodoname}
\numberwithin{equation}{section}
\def\cal{\mathcal}
\newtheorem{theorem}{Theorem}[section]
\newtheorem{proposition}[theorem]{Proposition}
\newtheorem{lemma}[theorem]{Lemma}
\newtheorem{corollary}[theorem]{Corollary}
\newtheorem{remark}[theorem]{Remark}
\newtheorem{example}{Example}
\newcommand\restr[2]{{
  \left.\kern-\nulldelimiterspace 
  #1 
  \vphantom{\big|} 
  \right|_{#2} 
  }}
\title[Some applications of the Nehari manifold method to functionals in $C^1(X \setminus \{0\})$]
{Some applications of the Nehari manifold method to functionals in $C^1(X \setminus \{0\})$}
\author[E.J.F. Leite]{Edir Júnior Ferreira Leite}
\author[H. Ramos Quoirin]{Humberto Ramos Quoirin}
\author[K. Silva]{Kaye Silva}
\address[E.J.F. Leite]{\newline\indent
	Departamento de Matem\'atica.   
	\newline\indent 
	Universidade Federal de S\~ao Carlos,
	\newline\indent
13565-905, S\~ao Carlos, SP, Brazil}
\email{\href{mailto:edirleite@ufscar.br}{edirleite@ufscar.br}}
\address[H. Ramos Quoirin]{\newline \indent CIEM-FaMAF \newline \indent Universidad Nacional de C\'{o}rdoba, \newline\indent 
(5000) C\'{o}rdoba, Argentina}
\email{\href{mailto:humbertorq@gmail.com}{ humbertorq@gmail.com}}
\address[K. Silva]{\newline\indent
	Instituto de Matem\'atica e Estat\'istica.   
	\newline\indent 
	Universidade Federal de Goi\'as,
	\newline\indent
Rua Samambaia, 74001-970, Goi\^ania, GO, Brazil}
\email{\href{mailto:kayesilva@ufg.br}{kayesilva@ufg.br}}
\subjclass[2010]{Primary  
35A15, 
35J20,
35J66
}
\keywords{Nehari manifold, prescribed energy problem, affine $p$-Laplacian, degenerate Kirchhoff problem}
\begin{document}

\begin{abstract}
Given a real Banach space $X$, we show that the Nehari manifold method can be applied to functionals which are $C^1$ in $X \setminus \{0\}$. In particular we deal with functionals that can be unbounded near $0$, and prove the existence of a ground state and infinitely many critical points for such functionals. These results are then applied to three classes of problems: the {\it prescribed energy problem} for a family of functionals depending on a parameter, problems involving the {\it affine} $p$-Laplacian operator, and degenerate Kirchhoff type problems. \end{abstract}

\bigskip

\maketitle

\bigskip

\section{Introduction}
\medskip
Let $X$ be a uniformly convex (real) Banach space and $\Phi$ a  $C^1$ functional (in the sense of Fréchet) over $X$. It is well-known that the Nehari manifold method is one of the main tools in nonlinear analysis when looking for critical points of $\Phi$. In particular this method allows us to obtain ground states (or least energy critical points) as well as infinitely many  critical points under appropriate conditions on $\Phi$. We refer to  \cite{SW} for a description of these results (among many others) and its applications to boundary value problems. We aim at extending these results to functionals that are $C^1$ in $X \setminus \{0\}$. As a matter of fact, searching for nonzero critical points of $\Phi$ makes sense even when this functional is not defined at $0$, and dealing with the Nehari set associated to $\Phi$ seems to be a good choice in view of its definition:
 $$\mathcal{N}=\mathcal{N}(\Phi):=\{u \in X \setminus \{0\}: \Phi'(u)u=0\}$$

 Our motivation for dealing with $\Phi \in C^1(X \setminus \{0\})$ is not merely abstract, as this kind of functional arises in (at least) three classes of problems, namely: the {\it prescribed energy problem} for a family of functionals depending on a parameter, problems involving the {\it affine} $p$-Laplacian operator and degenerate Kirchhoff type problems. Let us describe them in the sequel.

\subsection{The prescribed energy problem}\label{sspep}
Let $I_1,I_2\in C^1(X)$ and  $\Phi_\lambda:=I_1-\lambda I_2$ with $\lambda$ a real parameter. We consider the problem
\begin{equation}\label{ef}
\Phi_\lambda'(u)=0, \quad \Phi_{\lambda}(u)=c,
\end{equation}
where $c$ is an arbitrary real number.
 By a (nontrivial) solution of \eqref{ef} we mean a couple $(\lambda,u) \in \mathbb{R} \times X \setminus \{0\}$ satisfying \eqref{ef}, in which case $u$ is a critical point of $\Phi_\lambda$ at the level $c$.  In this sense, a solution of \eqref{ef} yields a critical point of  $\Phi_\lambda$ with prescribed energy.  Our main goal here is to analyse the solvability of \eqref{ef} in terms of $I_1,I_2$ and $c \in \mathbb{R}$. We are mainly motivated by the study of  boundary value problems whose solutions are critical points of a given functional (the so-called {\it energy functional}).

The problem \eqref{ef} has been recently studied in \cite{I3,RSiS,RSS} by means of the {\it nonlinear generalized Rayleigh quotient } method (the NGRQ method, for short), which is based on the introduction of the functional $\lambda_c$ given through the following relation:

\begin{equation}\label{mu}
\Phi_\lambda(u)=c \quad \Longleftrightarrow \quad \lambda=\lambda_c(u):=\frac{I_1(u)-c}{I_2(u)}.
\end{equation}
Here we assume that $I_2(u) \neq 0$ for every $u \in X \setminus \{0\}$, so that $\lambda_c \in C^1(X \setminus \{0\})$.
Furthermore, a simple computation shows that   
\begin{equation}
\label{lp}\lambda_c'(u)=\frac{\Phi_{\lambda_c(u)}'(u)}{I_2(u)}, \quad \forall u \in X  \setminus \{0\}.
\end{equation}
Therefore \eqref{ef} can be reformulated as follows:
\begin{equation*}
\Phi_\lambda'(u)=0, \quad \Phi_{\lambda}(u)=c \quad \Longleftrightarrow \quad \lambda=\lambda_c(u),\quad \lambda_c'(u)=0,
\end{equation*}
i.e.  critical points (along with its associated critical values) of $\lambda_c$ yield (all) couples $(\lambda,u)$ solving \eqref{ef}. 

So far the issue of finding critical points (and critical values) of $\lambda_c$ has been tackled by means of Pohozaev's {\it fibering method}, which leads to the NGRQ functional, cf. \cite{I3,RSiS}. 
In contrast with \cite{RSiS}, we shall consider now the case where $I_1$ may contain nonhomogeneous terms. Our main model for this situation is the functional \begin{equation}\label{fm}
\Phi_\lambda(u)=\frac{1}{2}\int_{\Omega} |\nabla u|^2-\frac{\lambda}{2}\int_{\Omega} u^2-\int_{\Omega} F(u), \quad u \in H_0^1(\Omega),
\end{equation} 
which is associated to the boundary value problem
\begin{equation}\label{semi}
\begin{cases}
-\Delta u=\lambda u +f(u) &\mbox{ in } \Omega,\\
u=0 &\mbox{ on } \partial \Omega.
\end{cases}
\end{equation}
Here $\Omega \subset \mathbb{R}^N$ ($N \geq 1$) is a bounded domain, $F(s):=\int_0^s f(t) dt$, and $f\in C(\mathbb{R})$ has subcritical Sobolev growth. In this case the functional $\lambda_c$ is given by
 \[
\lambda_c(u)=\frac{\int_\Omega |\nabla u|^2-2\int_\Omega F(u)-2c}{\int_\Omega |u|^2},
\]
 for $u \in H_0^1(\Omega) \setminus \{0\}$.
 
 We shall prove the existence of a ground state and infinitely many pairs of critical points of $\lambda_c$ and relate it to the problem \eqref{ef}. These results shall be established under rather general conditions on $I_1$ and $I_2$, providing us with applications to a large variety of elliptic problems.
 
\subsection{Problems involving the  affine $p$-Laplacian}

The {\it affine} $p$-Laplacian operator is given by
\[
\Delta^{\cal A}_p u = -{\rm div} \left( H_{u}^{p-1}(\nabla u) \nabla H_{u}(\nabla u) \right)
\]
for every $u \in W^{1,p}_0(\Omega) \setminus \{0\}$. Here
 \[
H_{u}^p(\zeta) = \gamma_{N,p}^{-\frac Np}\; {\cal E}_{p, \Omega}^{N + p}(u) \int_{\mathbb{S}^{N-1}}  \| \nabla_\xi u\|_p^{-(N+p)}  |\langle \xi, \zeta \rangle|^p\, d\sigma(\xi)\ \ {\rm for}\ \zeta \in \mathbb{R}^N\, ,
\] $\gamma_{N,p} = \left( 2 \omega_{N+p-2} \right)^{-1} \left(N \omega_N \omega_{p-1}\right) \left(N \omega_N\right)^{p/N}$, $\nabla_\xi u(x)=\nabla u(x) \cdot \xi$, $\omega_k$ is the volume of the unit Euclidean ball in $\mathbb{R}^k$, and
\begin{equation} \label{dep}
{\mathcal{E}}_{p,\Omega}(u) = \gamma_{N,p} \left( \int_{\mathbb{S}^{N-1}} \| \nabla_\xi u\|_p^{-N}\, d\sigma(\xi)\right)^{-\frac{1}{N}}
\end{equation}
is the so-called {\it affine} $p$-energy. In addition, we set 
$\Delta^{\cal A}_p 0 =0$.

We say that $u \in W^{1,p}_0(\Omega) \setminus \{0\}$ is a nontrivial weak solution of \begin{equation} \label{P}
\left\{
\begin{array}{rlllr}
\Delta^{\cal A}_p u &=&  f(u) & {\rm in} & \Omega, \\
u&=&0 & {\rm on} & \partial \Omega,
\end{array}\right.
\end{equation} if 
\[
\langle \Delta^{\cal A}_p u, \varphi \rangle = \int_\Omega H_{u}^{p-1}(\nabla u) \nabla H_{u}(\nabla u) \cdot \nabla \varphi = \int_\Omega f(u) \varphi
\]
for every $\varphi \in W^{1,p}_0(\Omega)$. It follows that  nontrivial weak solutions of \eqref{P} correspond to
 critical points of the functional
\[
\Phi_{\cal A}(u) = \frac{1}{p} {\cal E}^p_{p,\Omega}(u) -  \int_\Omega F(u),  \quad u\in W^{1,p}_0(\Omega) \setminus \{0\},
\]
where $F(t) = \int_0^t f(s) ds$ for $t \in \mathbb{R}$, and $f\in C(\mathbb{R})$ satisfies some growth conditions.  By Theorem 1 of \cite{LM2}, $\Phi_{\cal A}$ is continuous in $W^{1,p}_0(\Omega)$ and $C^1$ in $W_0^{1,p}(\Omega) \setminus \{0\}$. Furthermore the geometry and compactness properties of $\Phi_{\cal A}$ are affected by the fact that ${\cal E}^p_{p,\Omega}$ is not convex and may remain bounded along unbounded sequences, so ${\cal E}^p_{p,\Omega}$ is not coercive (which strongly differs from the classical $p$-Laplacian setting). We refer the reader to \cite{LM2} for a discussion on these issues, as well as further properties of the {\it affine} $p$-Laplacian operator.

The existence of at least one nontrivial solution for  \eqref{P} has been studied recently. For a powerlike $f$ we refer to Corollary 3.8 in \cite{SchT}, Theorem 5.1 in \cite{ST}, and   Theorems 1.1-1.5 in \cite{LM1}, where direct minimization arguments are used. In \cite{LM2} the problem (\ref{P}) was treated for a $p$-superlinear $f$ satisfying the classical Ambrosetti-Rabinowitz condition (see Theorems 3 therein). To this end the authors adapt the mountain pass and minimization theorems for a functional $\Phi$ which is continuous on  $X$ and $C^1$ in $X \setminus \{0\}$. More recently the concave-convex case of the problem (\ref{P}) was treated in \cite{LM3} using the tools obtained from \cite{LM2} with an appropriate perturbation argument.

We shall observe that dealing with the Nehari manifold associated to $\Phi_{\cal A}$ yields a more direct approach (in comparison to \cite{LM2,LM3}) with regard to the Palais-Smale condition. Indeed, unlike \cite{LM2,LM3} we do not need to perturb the functional $\Phi_{\cal A}$ to obtain such property. In addition, Theorem \ref{tap} below seems to be the first result on the existence of infinitely many solutions for \eqref{P}. 

\subsection{Degenerate Kirchhoff type problems} Our third model of functional is given by \begin{eqnarray*}
\Phi(u) &=& \frac{1}{\theta} \left(\int_\Omega \vert \nabla u\vert^{p}\right)^{\frac{\theta}{p}} - \int_\Omega F(u)\\
&=& \frac{1}{\theta} \|u\|^{\theta} - \int_\Omega F(u),
 \quad u\in W^{1,p}_0(\Omega) \setminus \{0\},
\end{eqnarray*}
where $\theta\leq 1<p$ and $\theta \neq 0$ (in particular $\theta$ can be negative). Such functional is related to the degenerate Kirchhoff type problem

\begin{equation} \label{P0}
\left\{
\begin{array}{rlllr}
-\|u\|^{\theta-p} \Delta_p u &=&  f(u) & {\rm in} & \Omega, \\
u&=&0 & {\rm on} & \partial \Omega,
\end{array}\right.
\end{equation}
where $F(t) = \int_0^t f(s) ds$ for $t \in \mathbb{R}$, and $f\in C(\mathbb{R})$ is subcritical.
When $p=2$ the problem above reduces to the so-called Kirchhoff problem
\begin{equation} \label{k2}
\left\{
\begin{array}{rlllr}
-M(\|u\|^2) \Delta u &=&  f(u) & {\rm in} & \Omega, \\
u&=&0 & {\rm on} & \partial \Omega,
\end{array}\right.
\end{equation}
with $M(s^2)=s^{\theta-2}$. Since $M$ is not bounded away from zero for $\theta \leq 1$ the problem is said to be degenerate. This class of Kirchhoff type problems has been mostly studied for a nondegenerate $M$.  In the degenerate case we refer the reader to \cite{AA}, where \eqref{k2} is considered with $f(u)$ replaced by $\lambda f(x,u)$ and $M\in C^1([0,\infty))$ such that $M(t)\to 0$ as $t\to \infty$. Assuming $f(x,u)$ to be asymptotically linear in $u$ the authors use bifurcation arguments to prove the existence of a positive solution for $0<\lambda<\Lambda_1:=M(0)\lambda_1$, see Theorem 3.1 therein. In \cite{M} a positive solution is obtained for  \eqref{k2} with   $f(u)=\lambda u^{r-1}$, $1<r<2^*$, and $M$ such that $s^{2-r}M(s) \to \infty$ as $s \to 0$ and $s^{2-r}M(s) \to 0$ as $s \to \infty$, see  Example 3.1 therein.

Besides applying to \eqref{P0} in a degenerate case with respect to the left-hand side, our results hold for a wide class of right-hand sides, which allows in particular $f$ to vanish at infinity when $\theta<1$, see Example 1 in Section 6 below. More precisely, the more negative is $\theta-1$, the faster $f$ can vanish at infinity. \\
 
 
This article is organized as follows: in Section 2 we state and prove our main abstract result, Theorem \ref{tn}, and apply it to \eqref{ef}, which yields Theorem \ref{c1} and Corollary \ref{cc1}. Section 3 contains the proof of Theorem \ref{c1}, which is then applied to some boundary value problems in Section 4. We also consider some problems that do not fit in the conditions of Theorem \ref{c1} in Subsection 4.1. Finally, in Sections 5 and 6 we apply Theorem \ref{tn} to problems \eqref{P} and \eqref{P0}, respectively.
 
\subsection*{Notation} Throughout this article, we use the following notation:

\begin{itemize}
	\item Unless otherwise stated $\Omega$ denotes a bounded domain of $\mathbb{R}^N$ with $N\geq 1$.
	
	\item Given $r>1$, we denote by $\Vert\cdot\Vert_{r}$ (or $\Vert\cdot\Vert_{r,\Omega}$ in case we need to stress the dependence on $\Omega$) the usual norm in
	$L^{r}(\Omega)$, and by $r^*$ the critical Sobolev exponent, i.e. $r^*=\frac{Nr}{N-r}$ if $r<N$ and $r^*=\infty$ if $r \geq N$.
	
	\item Strong and weak convergences are denoted by $\rightarrow$ and
	$\rightharpoonup$, respectively.
	
	\item Given $g\in L^{1}(\Omega)$ we simply write $\int_{\Omega}g$ instead of $\int_{\Omega} g(x)\, dx$.

\end{itemize}

 \medskip
 \section{Main abstract results}
 \medskip
 
As mentioned above, we shall rely here on the Nehari manifold approach in the spirit of \cite{SW}, which we recall now.
Let $X$ be an infinite-dimensional uniformly convex Banach space, equipped with $\|\cdot \| \in C^1(X \setminus \{0\})$.
 We shall verify that the abstract setting of \cite{SW} applies to $\Phi$ although this functional is not defined at $u=0$.  To this end, we require the following condition:\\
 
 \begin{itemize}
 	\item[(H1)] For every $u \in X \setminus \{0\}$ the map $t \mapsto \Phi(tu)$, defined for $t>0$, has exactly one critical point $t(u)>0$, which is either a global maximum point or a global minimum point. Moreover, the map $u \mapsto t(u)$ is bounded away from zero in $\mathcal{S}$, and bounded from above in any compact subset of $\mathcal{S}$, where $\mathcal{S}$ is the unit sphere in $X$.\\
 \end{itemize}

 Let us recall that in \cite{SW} it is assumed that $\Phi \in C^1(X)$ with $\Phi(0)=0$, and $t(u)$ is a global maximum point of $t \mapsto \Phi(tu)$ for every $u \in X \setminus \{0\}$, cf. Figure \ref{fig1}. This assumption implies that $\Phi$ is positive (and therefore bounded from below) on $\mathcal{N}$. We shall deal with cases where $\Phi$ is not positive on $\mathcal{N}$, which may occur not only when $t(u)$ is a maximum point, but also when it is a minimum point. 
 
   \begin{figure}[h]
  	\centering
  	\begin{tikzpicture}[scale=0.8]
  	\draw[->] (-1,0) -- (4,0) node[below] {\scalebox{0.8}{$t$}};
  
  	\draw[->] (0,-1) -- (0,3) node[right] {\scalebox{0.8}{$\Phi(tu)$}};
  
  	\draw[blue,thick,domain=0:2.2,smooth,variable=\x] plot ({\x},{2*(\x)^2-(\x)^3});

  	\end{tikzpicture}
  		\caption{Plot of the map $t \mapsto \Phi(tu)$ under the conditions of \cite{SW}.} \label{fig1}
  \end{figure}
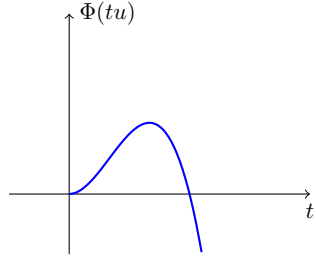

 \begin{figure}[h]
 	\centering
 	\begin{minipage}{0.2\linewidth} 
 	\begin{tikzpicture}[scale=0.8]
 \draw[->] (-1,0) -- (4,0) node[below] {\scalebox{0.8}{$t$}};
 
 \draw[->] (0,-1) -- (0,4) node[right] {\scalebox{0.8}{$\Phi(tu)$}};
 
 \draw[blue,thick,domain=0.2:3.8,smooth,variable=\x] plot ({\x},{(\x-2)^2+.5});	
 
 \end{tikzpicture}

\end{minipage}
 \hspace{.1\linewidth}
	\begin{minipage}{0.2\linewidth} 
	\begin{tikzpicture}[scale=0.8]
\draw[->] (-1,0) -- (4,0) node[below] {\scalebox{0.8}{$t$}};

\draw[->] (0,-3) -- (0,1) node[right] {\scalebox{0.8}{$\Phi(tu)$}};

\draw[blue,thick,domain=0.4:3.6,smooth,variable=\x] plot ({\x},{-(\x-2)^2-.5});	

\end{tikzpicture}

\end{minipage}
 \hspace{.1\linewidth}
\begin{minipage}{0.2\textwidth} 
	\begin{tikzpicture}[scale=0.8]
	\draw[->] (-1,0) -- (4,0) node[below] {\scalebox{0.8}{$t$}};

	\draw[->] (0,-1) -- (0,4) node[right] {\scalebox{0.8}{$\Phi(tu)$}};

	\draw[blue,thick,domain=0.2:4,smooth,variable=\x] plot ({\x},{(\x)^(-1)+4*(\x)^2*((\x)^2+1)^(-1)-2.1});	

	\draw [thick,dashed, red] (0,2) -- (4,2);

	\end{tikzpicture}
\end{minipage}
\caption{Some possible behaviors of the map $t \mapsto \Phi(tu)$ under (H1).} \label{fig2}
\end{figure}
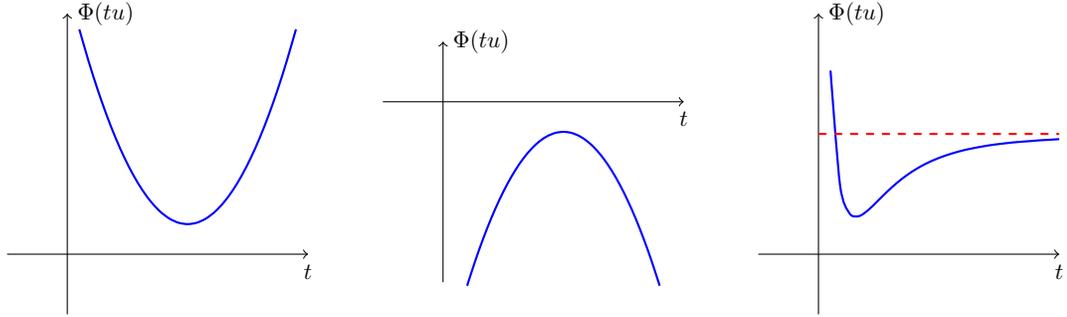


From (H1) it follows that $$\mathcal{N}=\{t(u)u \in X \setminus \{0\}: u \in \mathcal{S} \}.$$ In addition,  the uniqueness of $t(u)$ and the behavior of the map $u \mapsto t(u)$ on $\mathcal{S}$ (which is already assumed in condition $(A3)$ of \cite{SW}) entails that $\mathcal{N}$ is homeomorphic to $\mathcal{S}$, and there is a one-to-one correspondence between critical points of $\Phi$ and critical points of the map $u \mapsto \Phi(t(u)u)$, $u \in \mathcal{S}$. As a matter of fact, let $\delta>0$ be such that $t(u) \ge \delta$ for any $u \in \mathcal{S}$. Then the behavior of $t \mapsto \Phi(tu)$ for $t \in (0,\delta)$ and  $u \in \mathcal{S}$ does not affect the above one-to-one correspondence as this map has no critical points in $(0,\delta)$ for any $u \in \mathcal{S}$. So in particular we may have $\Phi(tu)\to \infty$ or $-\infty$ as $t \to 0^+$, cf. Figure \ref{fig2}.

The previous features imply that under (H1) the functional $\Phi$ has a ground state critical point whenever it achieves its infimum on $\mathcal{N}$.
The following standard condition shall then provide us with infinitely many pairs of critical points of $\Phi$:\\

 \begin{itemize}
	\item[(H2)]  $\Phi$ is even, bounded from below on $\mathcal{N}$, and satisfies the Palais-Smale condition on $\mathcal{N}$, i.e.
 	any sequence $(u_n) \subset \mathcal{N}$ such that $(\Phi(u_n))$ is bounded and $\Phi'(u_n) \to 0$ has a convergent subsequence (in $X \setminus \{0\}$).\\
 \end{itemize}

Indeed, the latter condition enables the use of the Ljusternik-Schnirelman principle.
Let $$\lambda_n:=\displaystyle
\inf_{F\in \mathcal{F}_n}\sup_{u\in F}\Phi(t(u)u),$$ where $
\mathcal{F}_n:=\{F\subset \mathcal{S}: F \mbox{ is compact, symmetric, and } \gamma(F)\ge n\}$, and $\gamma(F)$ is the Krasnoselskii genus of $F$. As stated in  \cite[Theorem 2]{SW}, every $\lambda_n$ is a critical value of $\Phi$, and if the nondecreasing sequence $(\lambda_n)$ is stationary, i.e. $\lambda_k=\lambda_{k+j}$ for some $k \in \mathbb{N}$ and every $j \in \mathbb{N}$, then $\lambda_k$ has infinitely many associated pairs of critical points.

Finally, the following condition shall provide us with infinitely many critical values of $\Phi$:\\

\begin{itemize}
	\item[(H3)] $\Phi(t(u_n)u_n) \to \infty$ for any $(u_n) \subset \mathcal{S}$ such that $u_n \rightharpoonup 0$ in $X$.\\ 
\end{itemize}

From the definition of $\lambda_n$ it is clear that $$\lambda_1=\displaystyle \inf_{u \in \mathcal{S}} \Phi(t(u)u)=\inf_{u \in \mathcal{N} } \Phi(u).$$ 

The previous discussion is summarized in the following result, which is proved by checking that the arguments of \cite{SW} still apply to our situation:

\begin{theorem}\label{tn}
Under (H1) the following assertions hold: 
\begin{enumerate}
\item If $\lambda_1$ is achieved then it is the ground state level of $\Phi$, i.e. the least critical level of $\Phi$.
\item If (H2) holds then 
$(\lambda_n)$ is a nondecreasing sequence of critical values of $\Phi$. In particular, this functional has infinitely many pairs of critical points.
\item  If (H2) and (H3) hold, then $\lambda_n \to \infty$ as $n \to \infty$, so that $\Phi$ has infinitely many critical values.
\end{enumerate}
\end{theorem}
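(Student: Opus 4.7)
The plan is to reduce everything to Ljusternik--Schnirelman theory on the unit sphere $\mathcal{S}$, following the Szulkin--Weth scheme \cite{SW}, and exploiting (H1) to stay away from the singularity of $\Phi$ at $0$. Set $m(u):=t(u)u$ and $\Psi:=\Phi\circ m$ on $\mathcal{S}$. First I would verify that $t:\mathcal{S}\to(0,\infty)$ is continuous (from the uniqueness part of (H1)) and that $m:\mathcal{S}\to\mathcal{N}$ is a homeomorphism with inverse $v\mapsto v/\|v\|$. Since (H1) gives $t(u)\geq\delta>0$ on $\mathcal{S}$, the image $m(\mathcal{S})=\mathcal{N}$ stays in $\{\|v\|\geq\delta\}$, avoiding the singularity of $\Phi$, so $\Psi\in C^1(\mathcal{S})$. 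Using $\partial_t\Phi(tu)|_{t=t(u)}=0$, a direct computation gives
$$\Psi'(u)[v] \;=\; t(u)\,\Phi'(m(u))[v], \qquad v \in T_u\mathcal{S},$$
so critical points of $\Psi$ on $\mathcal{S}$ correspond bijectively (via $m$) to critical points of $\Phi$ on $\mathcal{N}$, which are precisely the nonzero critical points of $\Phi$.

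Part (1) is then immediate: any $u_0\in\mathcal{S}$ at which $\lambda_1=\inf_\mathcal{S}\Psi$ is attained is a critical point of $\Psi$, and any nonzero critical point $v$ of $\Phi$ lies in $\mathcal{N}$, so $\Phi(v)\geq\inf_\mathcal{N}\Phi=\lambda_1$. For part (2), I would invoke the classical Ljusternik--Schnirelman theorem on the Banach manifold $\mathcal{S}$ after verifying that $\Psi$ is even (the uniqueness in (H1) combined with $\Phi$ even forces $t(-u)=t(u)$), bounded below (from (H2), since $\inf_\mathcal{S}\Psi=\inf_\mathcal{N}\Phi$), and satisfies Palais--Smale on $\mathcal{S}$: a PS sequence $(u_n)$ for $\Psi$ transfers through $m$ to a PS sequence $(m(u_n))$ for $\Phi$ on $\mathcal{N}$ (using that $t(u_n)$ is bounded above on compact subsets of $\mathcal{S}$ and bounded below by $\delta$), and (H2) then produces a convergent subsequence of $(m(u_n))$ in $X\setminus\{0\}$, which normalizes back to a convergent subsequence of $(u_n)$ in $\mathcal{S}$. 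The LS principle then yields each $\lambda_n$ as a critical value of $\Psi$, and the usual genus argument gives the multiplicity statement when the nondecreasing sequence $(\lambda_n)$ stagnates.

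Part (3) is the most delicate. The plan is to argue by contradiction: if $\lambda_n\leq c$ for all $n$, then by definition of $\lambda_n$ the sublevel set $A:=\{u\in\mathcal{S}:\Psi(u)\leq c+1\}$ contains compact symmetric sets of arbitrarily large genus, so $\gamma(A)=\infty$. One then needs to extract from $A$ a sequence $u_n\rightharpoonup0$, since (H3) would force $\Psi(u_n)\to\infty$, contradicting $u_n\in A$. This extraction is the main obstacle: infinite genus must be leveraged against the weak topology. Following \cite{SW}, I would choose closed subspaces $V_n\subset X$ of finite codimension with $\bigcap_n V_n=\{0\}$ (available since the uniformly convex $X$ is reflexive and infinite-dimensional) and invoke the classical Borsuk-type property that a set of genus exceeding $\mathrm{codim}(V_n)$ must intersect $V_n\cap\mathcal{S}$; a diagonal extraction then delivers $u_n\in A\cap V_n$, and $\bigcap V_n=\{0\}$ forces $u_n\rightharpoonup0$, closing the contradiction. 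All three parts then amount to routine translations of the arguments of \cite{SW} through the homeomorphism $m$, made possible by the $C^1$-character of $\Psi$ on $\mathcal{S}$ established in the first step.
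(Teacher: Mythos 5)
Your proposal is correct and takes essentially the same route as the paper: the paper's proof simply observes that the Szulkin--Weth machinery (\cite[Propositions 8--9, Corollary 10, Theorem 2]{SW}) and \cite[Lemma A.1]{RSS} carry over verbatim because $\Phi(0)$ is never used and (H1) keeps $\mathcal{N}$ at distance at least $\delta$ from the origin, which is exactly the observation underlying your reduction to $\Psi=\Phi\circ m$ on $\mathcal{S}$, the Palais--Smale transfer, and the genus-versus-codimension extraction for part (3). You have merely written out the details that the paper delegates to those citations, and your formula $\Psi'(u)v=t(u)\Phi'(m(u))v$ handles the maximum and minimum cases of (H1) uniformly, which is the point the paper makes about \cite[Propositions 8 and 9]{SW}.
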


\begin{proof}
It suffices to see that  \cite[Corollary 10]{SW} still holds if $\Phi \in C^1(X \setminus \{0\})$ satisfies (H1), as this result does not take into account the value $\Phi(0)$ (note that the proofs of \cite[Propositions 8 and 9]{SW}  hold in a similar way if $t(u)$ is a minimum point of $t \mapsto \Phi(tu)$). In particular, if $\lambda_1$ is achieved then it is the ground state level of $\Phi$. Moreover, Theorem 2 in \cite{SW} yields that $(\lambda_n)$ is a sequence of critical values of $\Phi$, which has infinitely many pairs of critical points. Finally, assertion (3) follows from Lemma A.1 in \cite{RSS}.
\end{proof}


\begin{remark}
Instead of assuming the `global' Palais-Smale condition on $\mathcal{N}$, one may assume that
the Palais-Smale condition on $\mathcal{N}$ holds at the levels $\lambda_k$. So $\lambda_k$ is a critical value of $\Phi$ if (H2) is replaced by:\\
 \begin{itemize}
	\item[(H2')]  $\Phi$ is even, bounded from below on $\mathcal{N}$, and satisfies the Palais-Smale condition on $\mathcal{N}$ at the level $\lambda_k$, i.e.
	any sequence $(u_n) \subset \mathcal{N}$ such that $\Phi(u_n)\to \lambda_k$ and $\Phi'(u_n) \to 0$ has a convergent subsequence (in $X \setminus \{0\}$).
\end{itemize}	

This condition is particularly useful when one looks for prescribed energy solutions of the Br\'ezis-Nirenberg problem, which is considered in Subsection 4.1 below.
\end{remark} 

\subsection{Applications to the prescribed energy problem}

Let us consider the functional $\lambda_c$ given by \eqref{mu} under the assumptions of Subsection \ref{sspep}.
We assume that $I_2$ is $\alpha$-homogeneous for some $\alpha>1$, i.e. $I_2(tu)=t^{\alpha}I_2(u)$ for any $u \in X \setminus \{0\}$ and $t>0$. Note that in  \eqref{fm} we have $I_2(u)=\frac{1}{2}\int_\Omega u^2$, which is a $2$-homogeneous functional.
 
From \eqref{lp} we see that $u \in \mathcal{N}(\lambda_c)$ if, and only if, $\Phi_{\lambda_c(u)}'(u)u=0$, i.e. $I_1'(u)u-\lambda_c(u)I_2'(u)u=0$.
Since $I_2$ is $\alpha$-homogeneous we have $I_2'(u)u=\alpha I_2(u)$, and consequently
\[
\mathcal{N}(\lambda_c)=\{u \in X  \setminus \{0\}: H(u)+\alpha c=0\},
\]
where
$$H(u):=I_1'(u)u-\alpha I_1(u), \quad u \in X.$$
We see that $H \in C^1(X)$ and $H(0)=0$ if $I_1(0)=0$, which shall be assumed in the sequel.
The behavior of the map $t \mapsto H(tu)$ is then crucial to understand $\mathcal{N}(\lambda_c)$. In this article we shall focus on the case where the  map  $t \mapsto \lambda_c(tu)$ has a unique critical point for any $u\neq 0$, which occurs if one of the following conditions is satisfied:\\

\begin{itemize}
	\item[(F1)] For any $u \in X \setminus \{0\}$ there exists $s(u) \ge 0$ such that the map $t \mapsto H(tu)$ is increasing in $(0,s(u))$ and decreasing in $(s(u),\infty)$. Moreover $\displaystyle \lim_{t \to \infty} H(tu)=-\infty$ uniformly on weakly compact subsets of $X \setminus \{0\}$.\\ 
   \item[(F2)]   For any $u \in X \setminus \{0\}$ there exists $s(u) \ge 0$ such that the map $t \mapsto H(tu)$ is decreasing in $(0,s(u))$ and increasing in $(s(u),\infty)$. Moreover $\displaystyle \lim_{t \to \infty} H(tu)=\infty$ uniformly on weakly compact subsets of $X \setminus \{0\}$.\\
\end{itemize}

Note that these conditions include the case $s(u)=0$ (i.e.  the map $t \mapsto H(tu)$ is monotone), which occurs in several problems.
Our main results on \eqref{ef} will be stated under the condition that $H$ is weakly semicontinuous, which is the case, for instance, in  \eqref{semi} with $f$  continuous and having subcritical Sobolev growth, as $H(u)=\int_\Omega \left(2F(u)-f(u)u\right)$ for $u \in H_0^1(\Omega)$, so that $H$ is weakly continuous. Note also that in this case we have $H(u)=\int_\Omega h(u)$ with $h(s)=2F(s)-f(s)s$, which appears in the litterature related to superlinear type problems without the Ambrosetti-Rabinowitz condition. We refer to \cite{FS,L} for more on this subject.

The following additional condition shall be imposed to have (H2) satisfied:\\

\begin{itemize}
	\item[(F3)] $I_1=J-K$, where $J,K \in C^1(X)$ are such that $K'$ is completely continuous, and there exist $C_1,C_2>0$, $\beta \geq\alpha$, and $\eta>1$ such that $J(u)\geq C_1\|u\|^{\beta}$ and $\left(J'(u)-J'(v)\right)(u-v)\geq C_2(\|u\|^{\eta-1}-\|v\|^{\eta-1})(\|u\|-\|v\|)$ for any $u,v \in X$. Moreover,  if $(F2)$ holds then $\displaystyle \limsup_{\|u\|\to \infty} \frac{K(u)}{\|u\|^{\alpha}}\le 0$.\\
\end{itemize}

Note that in \eqref{semi} we have $I_1=J-K$ with $J(u)=\frac{1}{2}\|u\|^2$ and $K(u)=\int_\Omega F(u)$, for $u \in H_0^1(\Omega)$.

We are now in position to state our main result on \eqref{ef}:
\begin{theorem}\label{c1}
Let $c \in \mathbb{R}$ and $I_1,I_2 \in C^1(X)$ be such that $I_1(0)=I_2(0)=0$, $I_2(u)>0$ for $u \in  X \setminus \{0\}$,  $I_2$ is $\alpha$-homogeneous for some $\alpha>1$, and $I_2'$ is completely continuous. Assume in addition  $(F1)$ (respect. $(F2)$) if $c>0$ (respect. $c<0$), $(F3)$, and $H$ is weakly lower (respect. upper) semicontinuous. Then:
\begin{enumerate}
\item $\lambda_c$  has a ground state level, given by $\lambda_{1,c}:=\displaystyle \min_{u \in \mathcal{S}} \lambda_c(t_c(u)u)=\min_{u \in \mathcal{N}(\lambda_c) } \lambda_c(u)$. 
\item If, in addition, $I_1,I_2$ are even then $\lambda_c$ has a nondecreasing unbounded sequence of critical values, given by \begin{equation*}\label{lnc}\lambda_{n,c}:=\displaystyle\inf_{F\in \mathcal{F}_n}\sup_{u\in F}\lambda_c(t_c(u)u).\end{equation*}
 In particular $\lambda_c$ has infinitely many pairs of critical points.
\end{enumerate}
\end{theorem}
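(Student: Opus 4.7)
The strategy is to apply the abstract Theorem \ref{tn} to $\Phi:=\lambda_c$, regarded as a $C^1$ functional on $X\setminus\{0\}$. The cornerstone of the verification is the fibering identity
\[
\frac{d}{dt}\lambda_c(tu)\;=\;\frac{H(tu)+\alpha c}{t\,I_2(tu)},\qquad t>0,\ u\in X\setminus\{0\},
\]
obtained from \eqref{lp}, the $\alpha$-homogeneity of $I_2$, and the definition of $H$. It shows that critical points of $t\mapsto\lambda_c(tu)$ are precisely the zeros of $t\mapsto H(tu)+\alpha c$, with the sign of the derivative controlled by that expression. The remainder of the argument consists in verifying (H1), (H2), and (H3) for $\lambda_c$; assertion (1) then follows because the Palais--Smale condition lets a minimizing sequence on $\mathcal{N}(\lambda_c)$ converge to a minimizer (identified as the ground state by Theorem \ref{tn}(1)), while assertion (2) comes from Theorem \ref{tn}(2)--(3).

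For (H1), fix $u\in\mathcal{S}$ and note $H(0)=0$. Under (F1) with $c>0$, the unimodal map $t\mapsto H(tu)$ rises from $0$ to its (positive) maximum and then decreases to $-\infty$, so it meets $-\alpha c<0$ exactly once at some $t_c(u)>s(u)$, which a sign analysis identifies as a global maximum of $t\mapsto\lambda_c(tu)$. Under (F2) with $c<0$ the symmetric argument yields a unique global minimum. The bound $\inf_{u\in\mathcal{S}}t_c(u)>0$ follows from the continuity of $H$ at the origin (small $t>0$ keeps $H(tu)$ close to $0\neq -\alpha c$), and the upper bound on any norm-compact (hence weakly compact) subset of $\mathcal{S}$ is immediate from the uniform limit clause of (F1)/(F2).

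Condition (H2) has three parts. Evenness of $\lambda_c$ is inherited from $I_1,I_2$. Boundedness below on $\mathcal{N}(\lambda_c)$ combines the Nehari identity $H(u)+\alpha c=0$ with $J(u)\ge C_1\|u\|^\beta$ ($\beta\ge\alpha$), the $\alpha$-homogeneity of $I_2$, and---under (F2)---the growth control $\limsup_{\|u\|\to\infty}K(u)/\|u\|^\alpha\le 0$ from (F3). For the Palais--Smale condition, given $(u_n)\subset\mathcal{N}(\lambda_c)$ with bounded energy and $\lambda_c'(u_n)\to 0$, (F3) yields boundedness of $(u_n)$; the uniform lower bound on $t_c$ prevents the weak limit $u$ from being zero; the complete continuity of $K'$ and $I_2'$ transfers those terms to their limits; and the monotonicity inequality in (F3) applied to $(J'(u_n)-J'(u))(u_n-u)$ forces $\|u_n\|\to\|u\|$, so that uniform convexity of $X$ upgrades weak to strong convergence.

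Finally, for (H3), take $u_n\in\mathcal{S}$ with $u_n\rightharpoonup 0$ and set $v_n:=t_c(u_n)u_n$. If $(t_c(u_n))$ were bounded along a subsequence, then $v_n\rightharpoonup 0$, and the weak lower (resp.\ upper) semicontinuity of $H$ under (F1) (resp.\ (F2)) would give $\liminf H(v_n)\ge 0$ (resp.\ $\limsup H(v_n)\le 0$), contradicting $H(v_n)=-\alpha c\neq 0$; hence $t_c(u_n)\to\infty$. A comparison of the growth rates of $J(v_n)\ge C_1 t_c(u_n)^\beta$, $I_2(v_n)=t_c(u_n)^\alpha I_2(u_n)$, and $K(v_n)$ then delivers $\lambda_c(v_n)\to\infty$. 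The main obstacle, as is typical for Nehari-type arguments, is the Palais--Smale step: specifically, showing that the weak limit of a PS sequence lies on $\mathcal{N}(\lambda_c)$, which is where the joint use of (F3) and the completely continuous perturbations $K',I_2'$ becomes essential.
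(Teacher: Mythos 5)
Your proposal follows the same route as the paper: establish the fibering identity $\frac{d}{dt}\lambda_c(tu)=\bigl(H(tu)+\alpha c\bigr)/\bigl(t^{\alpha+1}I_2(u)\bigr)$, verify (H1), (H2), (H3) for $\lambda_c$, and invoke Theorem \ref{tn}. The (H1) and (H3) verifications and the comparison-of-growth-rates argument for coercivity are exactly the paper's Lemmas \ref{l0} and \ref{l1}, and the Palais--Smale step via complete continuity of $K'$, $I_2'$ and the monotonicity inequality for $J'$ is also the paper's.

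There is, however, one step whose stated justification is wrong. In the Palais--Smale verification you claim that ``the uniform lower bound on $t_c$ prevents the weak limit $u$ from being zero.'' The lower bound $t_c(v)\geq\delta$ on $\mathcal{S}$ only says that $\mathcal{N}(\lambda_c)$ is bounded away from $0$ \emph{in norm}, and in an infinite-dimensional space that does not preclude a sequence in $\mathcal{N}(\lambda_c)$ from converging weakly to $0$ (think of an orthonormal sequence in a Hilbert space). The correct argument --- which is the paper's Lemma \ref{l1}(1), and which you yourself use two sentences later in the (H3) step --- is to exploit the constraint $H(u_n)=-\alpha c$ together with the weak lower (respectively upper) semicontinuity of $H$: if $u_n\rightharpoonup u$ then $H(u)\leq\liminf H(u_n)=-\alpha c<0$ for $c>0$ (and the reverse inequality for $c<0$), so $u\neq 0$ since $H(0)=0$. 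With that substitution your argument is complete and coincides with the paper's proof.
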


\begin{remark}
Instead of assuming in $(F3)$ that $\left(J'(u)-J'(v)\right)(u-v)\geq C_2(\|u\|^{\eta-1}-\|v\|^{\eta-1})(\|u\|-\|v\|)$ for any $u,v \in X$, one may assume that $J$ is weakly lower semicontinuous and $H$ is weakly continuous to show that the ground state level of $\lambda_c$ is achieved. Indeed, in this case $\mathcal{N}(\lambda_c)$ is weakly sequentially closed and $\lambda_c$ is weakly lower semicontinuous in $X \setminus \{0\}$, and since $0$ does not belong to the weak sequential closure of $\mathcal{N}(\lambda_c)$, we deduce that $\lambda_c$ achieves it infimum over $\mathcal{N}(\lambda_c)$.
\end{remark}

Theorem \ref{c1} takes the following form when stated in terms of the functional $\Phi_\lambda$:
\begin{corollary}\label{cc1}
Under the assumptions of Theorem \ref{c1}, the following assertions hold:
\begin{enumerate}
\item $\Phi_\lambda$ has a critical point $u_{1,c}$ at the level $c$ for $\lambda=\lambda_{1,c}$, and has no such critical point for $\lambda<\lambda_{1,c}$.
\item If, in addition, $I_1,I_2$ are even, then for every $n\in \mathbb{N}$ the functional $\Phi_\lambda$ has a pair of critical points $\pm u_{n,c}$ at the level $c$ for $\lambda=\lambda_{n,c}$.
Moreover $\lambda_{n,c} \nearrow \infty$ as $n \to \infty$, so there exist infinitely many couples $(\lambda, \pm u)$ such that $\Phi_\lambda'(\pm u)=0$ and $\Phi_\lambda(\pm u)=c$.
\end{enumerate}
\end{corollary}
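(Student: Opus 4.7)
The plan is to deduce the corollary directly from Theorem \ref{c1} by translating critical points of $\lambda_c$ into solutions of \eqref{ef}. The dictionary is precisely \eqref{mu}--\eqref{lp}: the identity $\lambda=\lambda_c(u)$ is equivalent to $\Phi_\lambda(u)=c$, and since $I_2(u)>0$ on $X\setminus\{0\}$, equation \eqref{lp} shows that $\lambda_c'(u)=0$ if and only if $\Phi_{\lambda_c(u)}'(u)=0$. Thus critical points of $\lambda_c$ at a critical level $\lambda$ correspond bijectively to solutions $(\lambda,u)$ of \eqref{ef}.

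For assertion (1), I would take $u_{1,c}$ to be a minimizer of $\lambda_c$ over $\mathcal{N}(\lambda_c)$, which exists by Theorem \ref{c1}(1); it satisfies $\lambda_c(u_{1,c})=\lambda_{1,c}$ and $\lambda_c'(u_{1,c})=0$, so the dictionary immediately yields $\Phi_{\lambda_{1,c}}'(u_{1,c})=0$ and $\Phi_{\lambda_{1,c}}(u_{1,c})=c$. For the nonexistence statement, I would reverse the correspondence: if $u\in X\setminus\{0\}$ satisfies $\Phi_\lambda'(u)=0$ and $\Phi_\lambda(u)=c$, then $\lambda=\lambda_c(u)$ and, by \eqref{lp}, $\lambda_c'(u)=0$; in particular $\lambda_c'(u)u=0$, so $u\in\mathcal{N}(\lambda_c)$ and therefore $\lambda=\lambda_c(u)\ge \lambda_{1,c}$, excluding the case $\lambda<\lambda_{1,c}$.

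For assertion (2), the evenness of $I_1$ and $I_2$ makes $H$ and $I_2$ even, hence $\lambda_c$ is even. Theorem \ref{c1}(2) then produces an unbounded nondecreasing sequence of critical values $\lambda_{n,c}$, each realized at a pair $\pm u_{n,c}$. The same dictionary converts these into pairs of critical points of $\Phi_{\lambda_{n,c}}$ at the level $c$, and the unboundedness of $(\lambda_{n,c})$ delivers infinitely many distinct couples $(\lambda,\pm u)$ solving \eqref{ef}. Since the substantive analytic content is already encapsulated in Theorem \ref{c1}, there is no genuine obstacle at this stage; the proof is essentially bookkeeping through \eqref{mu}--\eqref{lp}. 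The only step that merits slightly more care than a direct read-off is the nonexistence part in (1), because one must verify that every critical point of $\Phi_\lambda$ at the level $c$ automatically lies in the Nehari set $\mathcal{N}(\lambda_c)$, which is where the hypothesis $I_2\ne 0$ on $X\setminus\{0\}$ is used.
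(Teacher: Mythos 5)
Your proposal is correct and follows exactly the route the paper intends: the corollary is a direct translation of Theorem \ref{c1} through the equivalence $\Phi_\lambda'(u)=0,\ \Phi_\lambda(u)=c \Leftrightarrow \lambda=\lambda_c(u),\ \lambda_c'(u)=0$ established via \eqref{mu}--\eqref{lp} in Subsection 1.1, which is why the paper states it without a separate proof. Your extra care on the nonexistence claim in (1) — checking that any critical point of $\Phi_\lambda$ at level $c$ lands in $\mathcal{N}(\lambda_c)$ so that $\lambda\ge\lambda_{1,c}$ — is exactly the right bookkeeping.
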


\begin{remark}
When $X$ is a function space (which is the case in our applications) and $I_1,I_2$ are even, one can choose $u_{1,c}$ to be nonnegative in Corollary \ref{cc1}.
\end{remark}

\medskip
\section{Proof of Theorem \ref{c1}}
\medskip

Let us show now that Theorem \ref{c1} follows from Theorem \ref{tn}. To this end we shall prove in the next lemmas that $\lambda_c$ satisfies (H1), (H2), and (H3). 

\begin{lemma}\label{l0}
Let $c \neq 0$. Assume that $I_2$ is $\alpha$-homogeneous and $I_1$ satisfies $(F1)$ (respect. $(F2)$) if $c>0$ (respect. $c<0$). Then:
\begin{enumerate}
\item  For every $u \in X \setminus \{0\}$ the map $t \mapsto \lambda_c(tu)$ has exactly one critical point $t_c(u)>0$, which is a global maximum point (respect. minimum point). 
\item  If $(u_n) \subset \mathcal{S}$ and $t_c(u_n) \to \infty$ then $u_n \rightharpoonup 0$ in $X$. In particular, $t_c$ is bounded from above in any compact subset of $\mathcal{S}$.

\item $\lambda_c$ satisfies (H1).
\end{enumerate}
\end{lemma}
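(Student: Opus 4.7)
The plan is to reduce everything to an analysis of the real-valued map $t\mapsto H(tu)+\alpha c$, which I identify as (up to a positive factor) the numerator of $\frac{d}{dt}\lambda_c(tu)$. First I use the $\alpha$-homogeneity of $I_2$ to write
$$\lambda_c(tu)=\frac{I_1(tu)-c}{t^{\alpha}I_2(u)}, \qquad t>0,$$
and a direct differentiation gives
$$\frac{d}{dt}\lambda_c(tu)=\frac{H(tu)+\alpha c}{t^{\alpha+1}I_2(u)}.$$
Since $I_2(u)>0$, the sign of this derivative coincides with that of $H(tu)+\alpha c$, so the location and the max/min character of the critical points of $t\mapsto \lambda_c(tu)$ are governed entirely by the behavior of $t\mapsto H(tu)$, which is precisely what (F1) and (F2) describe.

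For assertion (1) I would treat the two cases separately. When $c>0$ and (F1) holds, I use $H(0)=0$ (which follows from $I_1(0)=0$) together with the rise/fall pattern: $t\mapsto H(tu)$ starts at $0$, climbs to a nonnegative maximum at $s(u)$, then decreases monotonically to $-\infty$. Hence $H(tu)=-\alpha c<0$ admits a unique solution $t_c(u)>s(u)$, and the single sign change of $H(tu)+\alpha c$ from positive to negative at $t_c(u)$ makes this the strict global maximum point of $\lambda_c(\cdot u)$. The case $c<0$ under (F2) is entirely symmetric and yields a strict global minimum point. The degenerate subcase $s(u)=0$ is handled in the same way, since continuity still gives $\lim_{t\to 0^+}H(tu)=0$.

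For assertion (2) I argue by contradiction: suppose $(u_n)\subset\mathcal{S}$ satisfies $t_c(u_n)\to\infty$ while $u_n\not\rightharpoonup 0$. By reflexivity I extract a subsequence with $u_{n_k}\rightharpoonup u\neq 0$. Then $K:=\{u_{n_k}\}\cup\{u\}$ is a weakly compact subset of $X\setminus\{0\}$, so the uniform limit in (F1) or (F2) supplies, for any $M>0$, some $T>0$ with $|H(tv)|>M$ whenever $t>T$ and $v\in K$. Evaluating at $t=t_c(u_{n_k})$ and $v=u_{n_k}$ for $k$ large contradicts $H(t_c(u_{n_k})u_{n_k})=-\alpha c$. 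The \emph{in particular} clause follows at once, because any compact $K\subset\mathcal{S}$ is sequentially compact with nonzero limits, and the contrapositive of what we just proved forbids $t_c$ from being unbounded there.

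For assertion (3) it remains only to verify within (H1) that $t_c$ is bounded away from zero on $\mathcal{S}$, the rest having been supplied by (1) and (2). If, conversely, some $(u_n)\subset\mathcal{S}$ had $t_c(u_n)\to 0$, then $t_c(u_n)u_n\to 0$ in norm, and continuity of $H$ with $H(0)=0$ would force $H(t_c(u_n)u_n)\to 0$, contradicting the defining identity $H(t_c(u_n)u_n)=-\alpha c\neq 0$. I expect the main subtlety to lie in the application of the uniform limit hypothesis in step (2); once one sees that $\{u_{n_k}\}\cup\{u\}$ is the correct weakly compact set to feed into (F1)/(F2), the rest of the argument is mechanical.
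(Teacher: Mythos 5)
Your proposal is correct and follows essentially the same route as the paper: differentiate $\lambda_c(tu)$ using the $\alpha$-homogeneity of $I_2$ to reduce critical points to the equation $H(tu)=-\alpha c$, invoke $H(0)=0$ together with the monotonicity pattern of (F1)/(F2) for uniqueness and the max/min character, derive (2) by contradiction from the uniform divergence of $H$ on weakly compact subsets, and obtain the lower bound on $t_c$ over $\mathcal{S}$ from the continuity of $H$ at $0$. Your treatment of (2) merely makes explicit the subsequence extraction and the choice of weakly compact set that the paper leaves implicit.
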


\begin{proof}\strut
\begin{enumerate}
\item Let $u \in X \setminus \{0\}$. By homogeneity we have $\lambda_c(tu):=\frac{I_1(tu)-c}{t^{\alpha}I_2(u)}$, so that 
\begin{eqnarray*}
\frac{d}{dt} \lambda_c(tu)&=&\frac{t^{\alpha}I_2(u)I_1'(tu)u-\alpha t^{\alpha-1}I_2(u)(I_1(tu)-c)}{t^{2\alpha}I_2(u)^2}=\frac{I_1'(tu)tu-\alpha(I_1(tu)-c)}{t^{\alpha+1}I_2(u)}\\
&=&\frac{H(tu)+\alpha c}{t^{\alpha+1}I_2(u)},
\end{eqnarray*}
i.e.  the map $t \mapsto \lambda(tu)$ has a critical point if, and only if, 
\begin{equation*}\label{et}
H(tu)=-\alpha c.
\end{equation*}
Since $H(0)=0\gtrless -\alpha c$, we see that under $(F1)$ (respect. $(F2)$) the map $t \mapsto \lambda(tu)$ has a unique critical point $t_c(u)>0$, which is a global maximum (respect. minimum) point if $c>0$ (respect. $c<0$). \\

\item Indeed, if $t_c(u_n) \to \infty$  and $u_n \rightharpoonup u \neq 0$ in $X$, then by $(F1)$ (respect. $(F2)$) we have $|H(t_c(u_n) u_n)| \to \infty$, which contradicts $H(t_c(u_n) u_n)=-\alpha c$. \\

\item It remains to show that $t_c$ is away from zero on $\mathcal{S}$, which follows from the fact that $H$ is continuous and $H(0)=0$. 
\end{enumerate}	
\end{proof}

\subsection{On the Palais-Smale condition for $\lambda_c$}
Let us first show that \eqref{lp} provides an easy way to characterize the Palais-Smale condition for $\lambda_c$ in terms of the functional $\Phi_\lambda$. Recall that $(u_n) \subset X  \setminus \{0\}$ is a Palais-Smale sequence for $\lambda_c$ at the level $d$ if $\lambda_c(u_n) \to d$ and $\lambda_c'(u_n) \to 0$ in $X^*$.

\begin{lemma}\label{l00}
Let $(u_n) \subset X  \setminus \{0\}$ be a Palais-Smale sequence for $\lambda_c$ at the level $d$, i.e. $\lambda_c(u_n) \to d$ and $\lambda_c'(u_n) \to 0$ in $X^*$.
 If $I_2'(u_n)$ and $I_2(u_n)$ are bounded then $(u_n)$ is a Palais-Smale sequence for $\Phi_d$ at the level $c$, i.e. $\Phi_d(u_n) \to c$ and $\Phi_d'(u_n) \to 0$ in $X^*$.
\end{lemma}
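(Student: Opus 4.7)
The plan is to unwind the definitions of $\lambda_c$ and $\Phi_\lambda$, using the identity \eqref{lp} that relates $\lambda_c'$ to $\Phi_{\lambda_c(u)}'$, and then pass from $\Phi_{\lambda_c(u_n)}'(u_n)$ to $\Phi_d'(u_n)$ by exploiting that $\lambda_c(u_n) \to d$ together with boundedness of $I_2'(u_n)$.

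First I would handle the energy level. From $\lambda_c(u_n)=\frac{I_1(u_n)-c}{I_2(u_n)} \to d$, writing $\lambda_c(u_n)=d+\varepsilon_n$ with $\varepsilon_n\to 0$, we get
\[
I_1(u_n)-d\,I_2(u_n)-c = \varepsilon_n\, I_2(u_n).
\]
Since the right-hand side tends to zero by the boundedness of $I_2(u_n)$, this yields $\Phi_d(u_n)=I_1(u_n)-d\,I_2(u_n) \to c$.

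Next I would treat the derivative. The identity \eqref{lp} can be rewritten as
\[
\Phi_{\lambda_c(u_n)}'(u_n) = I_2(u_n)\,\lambda_c'(u_n),
\]
so boundedness of $I_2(u_n)$ together with $\lambda_c'(u_n)\to 0$ in $X^*$ gives $\Phi_{\lambda_c(u_n)}'(u_n) \to 0$ in $X^*$. To switch the parameter from $\lambda_c(u_n)$ to $d$, I would use the elementary identity
\[
\Phi_d'(u_n) - \Phi_{\lambda_c(u_n)}'(u_n) = (\lambda_c(u_n)-d)\,I_2'(u_n),
\]
coming from the linearity in $\lambda$ of $\Phi_\lambda$. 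Since $\lambda_c(u_n)-d \to 0$ in $\mathbb{R}$ and $\{I_2'(u_n)\}$ is bounded in $X^*$, the right-hand side tends to $0$ in $X^*$, which combined with the previous step yields $\Phi_d'(u_n)\to 0$.

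There is no real obstacle here: the lemma is essentially a tautology unpacking \eqref{lp} and the definition of $\lambda_c$, and the only points that genuinely use the hypotheses are the two places where boundedness of $I_2(u_n)$ (for the energy level) and boundedness of $I_2'(u_n)$ (for the derivative) are invoked to absorb the vanishing factors $\varepsilon_n$ and $\lambda_c(u_n)-d$.
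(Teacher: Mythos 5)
Your proof is correct and follows essentially the same route as the paper: both arguments use the decomposition $\Phi_d(u_n)=c+(\lambda_c(u_n)-d)I_2(u_n)$ for the energy level and $\Phi_d'(u_n)=I_2(u_n)\lambda_c'(u_n)+(\lambda_c(u_n)-d)I_2'(u_n)$ (via \eqref{lp} and linearity in $\lambda$) for the derivative, absorbing the vanishing factors by the boundedness hypotheses. No gaps.
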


\begin{proof}
	From the definition of $\lambda_c$ we have that $\Phi_{\lambda_c(u_n)}(u_n)=c$, whereas
	\eqref{lp} yields that $\Phi_{\lambda_c(u_n)}'(u_n)=I_2(u_n) \lambda_c'(u_n)$. Thus
	$$\Phi_d(u_n)=\Phi_{\lambda_c(u_n)}(u_n) +(\lambda_c(u_n)-d)I_2(u_n)=c+(\lambda_c(u_n)-d)I_2(u_n)$$
	and $$\Phi_d'(u_n)=\Phi_{\lambda_c(u_n)}'(u_n) +(\lambda_c(u_n)-d)I_2'(u_n)=I_2(u_n) \lambda_c'(u_n)+(\lambda_c(u_n)-d)I_2'(u_n).$$
	The boundedness of $I_2'(u_n)$ and $I_2(u_n)$ yields the desired conclusion.
\end{proof}

The boundedness condition on $I_2'(u_n)$ and $I_2(u_n)$ assumed in Lemma \ref{l00} is clearly satisfied if $(u_n)$ is bounded and $I_2'$ is completely continuous, which provides the following result:

\begin{corollary}
Assume that $I_2'$ is completely continuous. Then 
any bounded Palais-Smale sequence for $\lambda_c$ at the level $d$ is a Palais-Smale sequence for $\Phi_d$ at the level $c$. In particular, if any Palais-Smale sequence for $\lambda_c$ at the level $d$ is bounded, and $\Phi_d$ satisfies the Palais-Smale condition at the level $c$, then $\lambda_c$ satisfies the Palais-Smale condition at the level $d$.
\end{corollary}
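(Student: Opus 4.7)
The plan is to reduce the Corollary entirely to Lemma \ref{l00}, whose hypotheses are the boundedness of $(I_2'(u_n))$ and $(I_2(u_n))$ along any bounded Palais-Smale sequence $(u_n)\subset X\setminus\{0\}$ for $\lambda_c$ at level $d$. So the whole task collapses to verifying these two boundedness statements, after which Lemma \ref{l00} delivers the first assertion directly.

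The first hypothesis is essentially the content of complete continuity. Since $X$ is uniformly convex and therefore reflexive, the bounded sequence $(u_n)$ is weakly relatively compact, and by complete continuity $I_2'$ sends any weakly convergent subsequence to a norm-convergent one. A standard subsequence-extraction argument then upgrades this to the statement that $I_2'$ maps bounded sets of $X$ into relatively compact, hence norm-bounded, subsets of $X^*$. In particular $(I_2'(u_n))$ is bounded.

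For the second hypothesis I would use a mean value argument:
\[
I_2(u_n) = I_2(0) + \int_0^1 I_2'(tu_n)\,u_n\,dt.
\]
The set $\{tu_n : t\in[0,1],\ n\in\mathbb{N}\}$ is bounded in $X$, so by the previous step its image under $I_2'$ is bounded in $X^*$, and pairing with the bounded $(u_n)$ yields a uniform bound on the integrand. Hence $(I_2(u_n))$ is bounded, and Lemma \ref{l00} applies, proving the first assertion.

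The ``in particular'' clause is then formal: any Palais-Smale sequence for $\lambda_c$ at $d$ is bounded by assumption, hence by the first part is Palais-Smale for $\Phi_d$ at $c$, so the Palais-Smale condition on $\Phi_d$ produces a subsequence converging in $X$. The only mildly subtle point, which I would flag but expect to be handled by the ambient framework (typically $I_1(0)=I_2(0)=0$ combined with $c\neq 0$ forces $|\lambda_c(u_n)|\to \infty$ whenever $u_n\to 0$, contradicting $\lambda_c(u_n)\to d$), is to exclude the limit $u=0$ so that the convergence takes place in $X\setminus\{0\}$. This is really the only step that requires thought; the rest is bookkeeping around Lemma \ref{l00}.
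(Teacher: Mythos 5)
Your proposal is correct and follows essentially the same route as the paper, which derives the corollary directly from Lemma \ref{l00} by observing that complete continuity of $I_2'$ (together with reflexivity of $X$) makes $(I_2'(u_n))$ and $(I_2(u_n))$ bounded along any bounded sequence; your mean-value argument for $I_2(u_n)$ and your remark on excluding the limit $u=0$ simply fill in details the paper leaves implicit (the latter being handled elsewhere, e.g. in Lemma \ref{l1}(1), and also following from $\alpha$-homogeneity of $I_2$ in the ambient setting).
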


The proof of Lemma \ref{l00} shows that this result can be extended as follows:

\begin{lemma}
Let $(u_n) \subset X  \setminus \{0\}$ and $(c_n) \subset \mathbb{R}$ be such that $\lambda_n=\lambda_{c_n}(u_n)\to d$, $\lambda_{c_n}'(u_n) \to 0$ in $X^*$, and $c_n \to c$. If in addition $I_2'(u_n)$ and $I_2(u_n)$ are bounded then $(u_n)$ is a Palais-Smale sequence for $\Phi_d$ at the level $c$.
\end{lemma}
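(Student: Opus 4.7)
The plan is to mimic, almost verbatim, the proof of the preceding Lemma \ref{l00}, the only new ingredient being that the ``$c$'' appearing in the identity $\Phi_{\lambda_c(u_n)}(u_n)=c$ is now replaced by a varying sequence $c_n\to c$. Accordingly, the strategy is: first, rewrite $\Phi_{\lambda_n}(u_n)$ and $\Phi_{\lambda_n}'(u_n)$ directly from the definition of $\lambda_{c_n}$ and from formula \eqref{lp}; second, pass from $\lambda_n$ to $d$ by adding and subtracting the appropriate linear correction involving $I_2$ and $I_2'$; third, exploit the hypothesized boundedness together with the convergences $\lambda_n\to d$, $c_n\to c$, $\lambda_{c_n}'(u_n)\to 0$.

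Concretely, from $\lambda_n=\lambda_{c_n}(u_n)=\bigl(I_1(u_n)-c_n\bigr)/I_2(u_n)$ we immediately obtain
\[
\Phi_{\lambda_n}(u_n)=I_1(u_n)-\lambda_n I_2(u_n)=c_n,
\]
and from \eqref{lp} applied to $\lambda_c=\lambda_{c_n}$ we get
\[
\Phi_{\lambda_n}'(u_n)=I_2(u_n)\,\lambda_{c_n}'(u_n).
\]
Then, by linearity of $\lambda\mapsto \Phi_\lambda$ in the parameter $\lambda$,
\[
\Phi_d(u_n)=\Phi_{\lambda_n}(u_n)+(\lambda_n-d)I_2(u_n)=c_n+(\lambda_n-d)I_2(u_n),
\]
\[
\Phi_d'(u_n)=\Phi_{\lambda_n}'(u_n)+(\lambda_n-d)I_2'(u_n)=I_2(u_n)\lambda_{c_n}'(u_n)+(\lambda_n-d)I_2'(u_n).
\]

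To conclude, I use that $\lambda_n-d\to 0$ and $c_n-c\to 0$, combined with the boundedness of $I_2(u_n)$ (to kill the first error term in $\Phi_d(u_n)$ and to turn $\lambda_{c_n}'(u_n)\to 0$ into $I_2(u_n)\lambda_{c_n}'(u_n)\to 0$ in $X^*$) and of $I_2'(u_n)$ in $X^*$ (to kill the correction $(\lambda_n-d)I_2'(u_n)$). This gives $\Phi_d(u_n)\to c$ and $\Phi_d'(u_n)\to 0$ in $X^*$, which is the Palais--Smale condition at level $c$ for $\Phi_d$.

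There is no real obstacle here: the statement is a perturbative upgrade of Lemma \ref{l00}, and the only point demanding care is the bookkeeping that ensures the ``parameter error'' $\lambda_n-d$ is absorbed by the bounded factors $I_2(u_n)$ and $I_2'(u_n)$ rather than appearing in a product with something blowing up. This is exactly why the boundedness of $I_2(u_n)$ and $I_2'(u_n)$ is assumed in the hypothesis.
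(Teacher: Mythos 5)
Your proof is correct and is exactly the extension the paper has in mind: the paper gives no separate argument for this lemma, merely remarking that the proof of Lemma \ref{l00} carries over, and your computation is that proof with $c$ replaced by $c_n$ and the extra term $c_n-c\to 0$ absorbed at the end. Nothing further is needed.
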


Let us show now that $\lambda_c$ satisfies (H2) and (H3):
\begin{lemma}\label{l1}

 Let  $c \neq 0$. Assume that $I_2$ is $\alpha$-homogeneous and weakly continuous,  $I_1$ satisfies $(F1)$ (respect. $(F2)$), and $H$ is weakly lower (respect. upper) semicontinuous  if $c>0$ (respect. $c<0$).	Assume in addition that $(F3)$ holds.
\begin{enumerate}
\item If $(u_n) \subset \mathcal{N}(\lambda_c)$ and $u_n \rightharpoonup u$ then $u \neq 0$, i.e. $0$ does not belong to the weak sequential closure of $\mathcal{N}(\lambda_c)$.
\item $\lambda_c$ is coercive and bounded from below on $\mathcal{N}(\lambda_c)$.
\item  Let  $(u_n) \subset \mathcal{S}$. Then
$$u_n \rightharpoonup 0 \mbox{ in } X \quad \Leftrightarrow \quad t_c(u_n) \to \infty \quad \Leftrightarrow \quad \lambda_c(t_c(u_n)u_n) \to \infty.$$
In particular, $\lambda_c$ satisfies (H3).
\item $\lambda_c$ satisfies the Palais-Smale condition on $\mathcal{N}$.

\end{enumerate}	
\end{lemma}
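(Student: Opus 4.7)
The plan is to establish (1)--(4) by exploiting the Nehari characterization $\mathcal{N}(\lambda_c) = \{u \in X \setminus \{0\}: H(u) = -\alpha c\}$ obtained in Subsection~2.1. For (1), if $u_n \in \mathcal{N}(\lambda_c)$ and $u_n \rightharpoonup 0$, passing the identity $H(u_n) = -\alpha c$ to the weak limit via the weak lower (respectively upper) semicontinuity of $H$ gives $0 = H(0) \leq -\alpha c < 0$ when $c>0$ (resp.\ $0 \geq -\alpha c > 0$ when $c<0$), a contradiction.

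For (2), consider $(u_n) \subset \mathcal{N}(\lambda_c)$ with $\|u_n\| \to \infty$ and set $v_n = u_n/\|u_n\| \in \mathcal{S}$. I would first rule out any weak accumulation point $v \neq 0$ of $(v_n)$: such a subsequence would give a weakly compact subset of $X \setminus \{0\}$, and the uniform divergence $H(\|u_n\| v_n) \to \mp\infty$ imposed by $(F1)$ or $(F2)$ would contradict $H(u_n) = -\alpha c$. Hence $v_n \rightharpoonup 0$, and weak continuity of $I_2$ (a consequence of complete continuity of $I_2'$ together with $I_2'(0) = 0$, which follows from $\alpha$-homogeneity since $\alpha>1$) yields $I_2(u_n) = \|u_n\|^\alpha I_2(v_n) = o(\|u_n\|^\alpha)$. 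The lower bound $J(u_n) \geq C_1\|u_n\|^\beta$ with $\beta \geq \alpha$ from $(F3)$, combined with the control on $K$ coming either from $\limsup K(u)/\|u\|^\alpha \leq 0$ in case $(F2)$ or from rearranging Nehari as $\alpha K(u_n) - K'(u_n)u_n = \alpha J(u_n) - J'(u_n)u_n - \alpha c$ in case $(F1)$, then forces $\lambda_c(u_n) = (I_1(u_n)-c)/I_2(u_n) \to \infty$. Boundedness from below is a corollary: a minimizing sequence is bounded by coercivity, weakly subconverges to some $u \neq 0$ by (1), and weak lower semicontinuity of $\lambda_c$ on $X \setminus \{0\}$ (from the $J$-monotonicity of $(F3)$ and weak continuity of $K, I_2$) forces the infimum to be finite.

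For (3), Lemma~\ref{l0}(2) supplies the implication $t_c(u_n) \to \infty \Rightarrow u_n \rightharpoonup 0$; conversely, if $u_n \rightharpoonup 0$ but $t_c(u_n)$ is bounded along a subsequence, then $w_n := t_c(u_n)u_n \in \mathcal{N}(\lambda_c)$ satisfies $w_n \rightharpoonup 0$, contradicting (1). The equivalence $t_c(u_n) \to \infty \Leftrightarrow \lambda_c(t_c(u_n)u_n) \to \infty$ then follows by applying (2) to $w_n$, together with the fact that $\lambda_c$ is continuous and bounded on subsets of $X$ that are bounded and bounded away from $0$.

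The Palais--Smale statement (4) is the main obstacle. Given $(u_n) \subset \mathcal{N}(\lambda_c)$ with $\lambda_c(u_n) \to d$ and $\lambda_c'(u_n) \to 0$, coercivity delivers boundedness of $(u_n)$ and (1) ensures any weak limit $u$ is nonzero. Lemma~\ref{l00} transfers this into a Palais--Smale sequence for $\Phi_d$ at level $c$, so $J'(u_n) = \Phi_d'(u_n) + K'(u_n) + d I_2'(u_n)$ converges strongly in $X^*$ to $K'(u) + d I_2'(u)$ by complete continuity of $K'$ and $I_2'$. Plugging this into the $(F3)$ monotonicity inequality, first on pairs $(u_n, u_m)$ to get $(\|u_n\|^{\eta-1} - \|u_m\|^{\eta-1})(\|u_n\| - \|u_m\|) \to 0$, and then on $(u_n, u)$ using $(J'(u_n) - J'(u))(u_n - u) \to 0$, shows that $(\|u_n\|)$ is Cauchy with limit equal to $\|u\|$. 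Uniform convexity of $X$ then upgrades $u_n \rightharpoonup u$ together with $\|u_n\| \to \|u\|$ to strong convergence $u_n \to u$, finishing the proof.
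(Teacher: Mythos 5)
Your parts (1), (3), and (4) track the paper's proof closely and are fine: (1) is the same semicontinuity argument on the constraint $H(u_n)=-\alpha c$, (3) is the same chain of implications via Lemma \ref{l0} and part (1), and (4) is the paper's argument (transfer to a Palais--Smale sequence for $\Phi_d$ via \eqref{lp}, complete continuity of $K'$ and $I_2'$, the monotonicity inequality for $J'$, and uniform convexity); your detour through pairs $(u_n,u_m)$ is unnecessary but harmless, and your alternative route to boundedness from below (weak lower semicontinuity of $\lambda_c$ from convexity of $J$) is a legitimate variant of the paper's contradiction argument, essentially the one recorded in Remark 3.4.

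The genuine gap is in the coercivity argument of part (2) in the case $(F1)$, $c>0$. You claim that the rearranged Nehari identity $\alpha K(u_n)-K'(u_n)u_n=\alpha J(u_n)-J'(u_n)u_n-\alpha c$ provides ``control on $K$'' sufficient to force $\lambda_c(u_n)=(J(u_n)-K(u_n)-c)/I_2(u_n)\to\infty$. It does not: the identity only constrains the combination $\alpha K(u_n)-K'(u_n)u_n$, and with no Ambrosetti--Rabinowitz-type relation between $K(u)$ and $K'(u)u$ assumed in $(F3)$, both $K(u_n)$ and $K'(u_n)u_n$ may blow up (even faster than $J(u_n)\ge C_1\|u_n\|^{\beta}$) while their combination stays equal to $-\alpha c$ (in the model, where $J$ is $\alpha$-homogeneous, the identity degenerates to the constraint $\int(2F(u_n)-f(u_n)u_n)=-2c$, which bounds neither $\int F(u_n)$ nor $\int f(u_n)u_n$). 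So the numerator $I_1(u_n)-c$ is not bounded below along your sequence by this route. The missing idea is the one the paper uses: since under $(F1)$ with $c>0$ the point $t_c(u)$ is a \emph{global maximum} of $t\mapsto\lambda_c(tu)$, membership $u_n\in\mathcal{N}(\lambda_c)$ means $t=1$ maximizes $t\mapsto\lambda_c(tu_n)$, whence $\lambda_c(u_n)\ge\lambda_c(tv_n)$ for every fixed $t>0$ with $v_n=u_n/\|u_n\|$. This reduces the estimate to the fixed scale $tv_n$ with $v_n\rightharpoonup 0$, where weak continuity of $K$ and $I_2$ gives $K(tv_n)\to 0$ and $I_2(v_n)\to 0$, and choosing $t$ with $C_1t^{\beta-\alpha}-ct^{-\alpha}>0$ yields $\lambda_c(u_n)\to\infty$. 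You should replace your $(F1)$ estimate by this extremality argument (your $(F2)$, $c<0$ case, using $\limsup K(u)/\|u\|^{\alpha}\le 0$, is correct and matches the paper).
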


\begin{proof}\strut
\begin{enumerate}
\item Let $c>0$. 
If $(u_n) \subset \mathcal{N}(\lambda_c)$ and $u_n \rightharpoonup u$ then the weak lower semicontinuity of $H$ implies that $H(u) \le \liminf H(u_n)=-\alpha c < 0$, i.e. $u \neq 0$. A similar argument applies if $c<0$.\\

\item Let $(u_n) \subset \mathcal{N}(\lambda_c)$ be such that $\|u_n\| \to \infty$. Then  $v_n=\frac{u_n}{\|u_n\|}\rightharpoonup 0$ in $X$. Indeed, otherwise we have (up to a subsequence) $v_n \rightharpoonup v \not \equiv 0$ in $X$, and consequently by $(F1)$ or $(F2)$ we have  $\alpha |c|=|H(u_n)|=|H(\|u_n\|v_n)| \to \infty$,  which yields a contradiction. Therefore $v_n \rightharpoonup 0$ in $X$.

If $(F1)$ holds and $c>0$ then, since $t=1$ is now a global maximum point of $t \mapsto \lambda_c(tu_n)$, we have
$$\lambda_c(u_n) \ge \lambda_c(tv_n) \geq \frac{C_1t^{\beta-\alpha}-\frac{K(tv_n)+c}{t^{\alpha}}}{I_2(v_n)}, \quad \forall t>0.$$
Chosing $t>0$ such that $C_1t^{\beta-\alpha}-ct^{-\alpha}>0$ and using the fact that $K(tv_n) \to 0$ and $I_2(v_n) \to 0$, we find that $\lambda_c(u_n) \to \infty$. 
If now $(F2)$ holds and $c<0$ then, by $(F3)$, we have $\liminf \frac{I_1(u_n)}{\|u_n\|^{\alpha}}>0$, so that
$$\lambda_c(u_n) \ge \frac{I_1(u_n)}{I_2(u_n)}=\frac{I_1(u_n)}{\|u_n\|^{\alpha}I_2(v_n)}\to \infty.$$
Thus $\lambda_c(u_n) \to \infty$, so $\lambda_c$ is coercive  on $\mathcal{N}(\lambda_c)$. Let us show that it is also bounded from below therein.
 Assume by contradiction that $(u_n) \subset \mathcal{N}(\lambda_c)$ and $\lambda_c(u_n)\to -\infty$. We know that $(u_n)$ is bounded, so that up to a subsequence $u_n \rightharpoonup u$ in $X$. Since $(F3)$ implies that $(I_1(u_n))$ is bounded from below, we must have $I_2(u_n) \to 0$, i.e. $u_n \rightharpoonup 0$ in $X$, which contradicts the previous item. Therefore $\lambda_c$ is bounded from below on $\mathcal{N}(\lambda_c)$.\\

\item If $u_n \rightharpoonup 0$ in $X$ with $(t_c(u_n))$ bounded, then $t_c(u_n)u_n \rightharpoonup 0$ in $X$, which contradicts (1). Thus $t_c(u_n) \to \infty$, and arguing as in the previous item we see that $\lambda_c(t_c(u_n)u_n) \to \infty$.
Now, if $\lambda_c(t_c(u_n)u_n) \to \infty$ then, by (1) we can not have $t_c(u_n)u_n \rightharpoonup 0$ in $X$. Since $(I_1(t_c(u_n)u_n))$ is bounded from below if $(t_c(u_n)u_n)$ is bounded, we infer that $(t_c(u_n)u_n)$ is unbounded, i.e. $t_c(u_n) \to \infty$. Lemma \ref{l0} yields that $u_n \rightharpoonup 0$ in $X$.\\

\item First of all observe that since $I_2$ is weakly continuous, it is bounded on bounded sets of $X$. Let $(u_n) \subset \mathcal{N}(\lambda_c)$ be a Palais-Smale sequence for $\lambda_c$. By (2) we know that $(u_n)$ is bounded, so that up to a subsequence $u_n \rightharpoonup u \neq 0$ in $X$. From \eqref{lp} we know that $\Phi_{\lambda_c(u_n)}'(u_n)=I_2(u_n)\lambda_c'(u_n)$, and since $(I_2(u_n))$ is bounded we deduce that $\Phi_{\lambda_c(u_n)}'(u_n) \to 0$. Thus, from the boundedness of $(\lambda_c(u_n))$ and the complete continuity of $I_2'$ and $K'$,  we obtain  that $J'(u_n)(u_n-u) \to 0$, and consequently 
$\left(J'(u_n)-J'(u)\right)(u_n-u) \to 0$. Using the inequality $\left(J'(u)-J'(v)\right)(u-v)\geq C_2(\|u\|^{\eta-1}-\|v\|^{\eta-1})(\|u\|-\|v\|)$ we find that $\|u_n\| \to \|u\|$, and the uniform convexity of $X$ provides the desired conclusion.
\end{enumerate}	
\end{proof}

\begin{remark}\label{r1}
Under $(F3)$ assume in addition that the map $u \mapsto J'(u)u-\alpha J(u)$ is weakly continuous. Since $K'$ is completely continuous, it follows that $K$ and $u \mapsto K'(u)u$ are weakly continuous, and so is $H$. Note that if $J$ is $\alpha$-homogeneous then $J'(u)u-\alpha J(u)=0$ for every $u$, so in this case $H$ is weakly continuous under $(F3)$.
\end{remark}


\medskip
\section{Applications of Theorem \ref{c1} to some boundary value problems}
\medskip

Next we apply Theorem \ref{c1} (more precisely Corollary \ref{cc1}) to the prescribed energy problem associated to some boundary value problems. First we deal with some cases where 
$H$ is weakly semicontinuous.

\begin{corollary}\label{c2}
Let  $c>0$ (respect. $c<0$) and $f\in C(\mathbb{R})$ satisfy
\begin{itemize}
\item[(f1)] 	$|f(s)| \leq C(1+|s|^{r-1})$ for some $C>0$ and $1<r<2^*$, and every $s \in \mathbb{R}$.
\item[(f2)]  $s \mapsto G(s):= sf(s)-2F(s)$ is increasing (respect. decreasing) in $(0,\infty)$, and decreasing (respect. increasing) in $(-\infty,0)$, and $\displaystyle \lim_{|s| \to \infty}G(s)=\infty$ (respect. $-\infty$). 
\end{itemize}
 Then:
\begin{enumerate}
\item The problem \eqref{semi} has a  nontrivial solution $ u_{1,c}$ having energy $c$ for $\lambda=\lambda_{1,c}$, and has no such solution for $\lambda<\lambda_{1,c}$.\\ 
\item If in addition $f$ is odd, then we can choose $u_{1,c}$ to be nonnegative, and for every $n\in \mathbb{N}$ the problem \eqref{semi} has a pair of nontrivial solutions $\pm u_{n,c}$ having energy $c$ for $\lambda=\lambda_{n,c}$. Moreover $\lambda_{n,c} \nearrow \infty$ as $n \to \infty$.
\end{enumerate}
\end{corollary}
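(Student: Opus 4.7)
My plan is to read Corollary~\ref{c2} as a verification of the hypotheses of Theorem~\ref{c1} (equivalently, Corollary~\ref{cc1}) on the Hilbert space $X=H_0^1(\Omega)$ with
\[
I_1(u)=\tfrac12\|u\|^2-\int_\Omega F(u),\qquad I_2(u)=\tfrac12\int_\Omega u^2,
\]
so that $\alpha=2$. One checks immediately $I_1(0)=I_2(0)=0$, $I_2>0$ on $X\setminus\{0\}$, $I_2$ is $2$-homogeneous, and $I_2'$ is completely continuous via the compact embedding $H_0^1(\Omega)\hookrightarrow L^2(\Omega)$. A short calculation gives
\[
H(u)=I_1'(u)u-2I_1(u)=\int_\Omega\bigl(2F(u)-uf(u)\bigr)=-\int_\Omega G(u),
\]
and the subcritical bound $|G(s)|\le C(1+|s|^r)$ inherited from (f1), combined with the compact Sobolev embedding $H_0^1\hookrightarrow L^r$, makes $u\mapsto\int_\Omega G(u)$ weakly continuous, so $H$ is in particular weakly lower and upper semicontinuous.

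To verify (F1) for $c>0$ and (F2) for $c<0$ I would exploit the pointwise monotonicity of $t\mapsto G(tu(x))$. Because $G$ is monotone on each of $(-\infty,0)$ and $(0,\infty)$ with $G(0)=0$, we have $G\ge 0$ everywhere when $c>0$ and $G\le 0$ everywhere when $c<0$; on each ray $t\mapsto G(tu(x))$ is monotone of the corresponding sign, so $t\mapsto H(tu)$ is globally monotone on $(0,\infty)$ and (F1) (resp.\ (F2)) holds with $s(u)=0$. The uniform-in-$u$ blow-up of $|H(tu)|$ on weakly compact subsets $W\subset X\setminus\{0\}$ I would prove by contradiction: given $u_n\in W$ and $t_n\to\infty$ with $|H(t_nu_n)|$ bounded, extract $u_n\rightharpoonup u\in W$ (so $u\ne 0$) with $u_n\to u$ a.e.; then $|G(t_nu_n(x))|\to\infty$ on $\{u\ne 0\}$, and the definite sign of $G$ allows Fatou's lemma to force $|H(t_nu_n)|\to\infty$.

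Next I would verify (F3) with $J(u)=\tfrac12\|u\|^2$ and $K(u)=\int_\Omega F(u)$: $K'$ is completely continuous by (f1) + Rellich, $J$ gives $\beta=\alpha=2$, and the Hilbert structure yields
\[
\bigl(J'(u)-J'(v)\bigr)(u-v)=\|u-v\|^2\ge\bigl(\|u\|-\|v\|\bigr)^2,
\]
i.e.\ the monotonicity of (F3) with $\eta=2$. The delicate point is $\limsup_{\|u\|\to\infty}K(u)/\|u\|^2\le 0$ when $c<0$; I would check it only along the sequences actually used in the proof of Lemma~\ref{l1}(2). For an unbounded $u_n\in\mathcal{N}(\lambda_c)$ one first shows, as in that proof, $v_n:=u_n/\|u_n\|\rightharpoonup 0$ in $H_0^1$. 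The identity $(F(s)/s^2)'=G(s)/s^3$, together with $G\le 0$ from (f2) for $c<0$, makes $F(s)/s^2$ nonincreasing on $(0,\infty)$ and nondecreasing on $(-\infty,0)$, so $F(s)/s^2\le A$ for $|s|\ge s_0$ and $|F(s)|\le C_0$ for $|s|\le s_0$, whence
\[
\frac{K(u_n)}{\|u_n\|^2}\le A\|v_n\|_2^2+\frac{C_0|\Omega|}{\|u_n\|^2}\longrightarrow 0
\]
by Rellich, providing the coercivity needed in the Nehari analysis.

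With the abstract hypotheses in hand, Corollary~\ref{cc1}(1) yields assertion~(1) together with the non-existence for $\lambda<\lambda_{1,c}$. When $f$ is odd, $F$ is even, hence $I_1$ and $I_2$ are even, and Corollary~\ref{cc1}(2) gives the pairs $\pm u_{n,c}$ with $\lambda_{n,c}\nearrow\infty$. To choose $u_{1,c}$ nonnegative I would note that $\lambda_c(\abs{u})=\lambda_c(u)$ for every $u\in X\setminus\{0\}$, since $\int_\Omega\abs{\nabla\abs{u}}^2=\int_\Omega\abs{\nabla u}^2$, $\int_\Omega\abs{u}^2=\int_\Omega u^2$, and $F$ is even; hence $\abs{u_{1,c}}$ is also a minimizer of $\lambda_c$ on $\mathcal{N}(\lambda_c)$ and thus a nonnegative solution of \eqref{semi} at $\lambda=\lambda_{1,c}$. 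The main technical obstacle is precisely the $c<0$ form of (F3): the literal statement over all of $X$ is not implied by (f1)+(f2) alone, and one must instead exploit the Nehari restriction as above.
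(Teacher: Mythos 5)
Your proposal follows the same route as the paper: both proofs consist of verifying the hypotheses of Theorem~\ref{c1} (hence Corollary~\ref{cc1}) with $I_1(u)=\frac12\|u\|^2-\int_\Omega F(u)$, $I_2(u)=\frac12\int_\Omega u^2$, $\alpha=2$, $J(u)=\frac12\|u\|^2$, $K(u)=\int_\Omega F(u)$ and $H(u)=-\int_\Omega G(u)$; the paper's version is a four-line check that invokes Remark~\ref{r1} for the weak continuity of $H$, while you unpack each verification. The one substantive difference is your treatment of the last clause of $(F3)$ in the case $c<0$, and there you are right to be suspicious: the global condition $\limsup_{\|u\|\to\infty}K(u)/\|u\|^{2}\le 0$ is \emph{not} a consequence of (f1)--(f2). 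For instance $F(s)=Ls^{2}+|s|^{3/2}$ with $L>0$ gives $G(s)=-\tfrac12|s|^{3/2}$, so (f1) and the $c<0$ form of (f2) hold, yet $K(t\varphi_1)/\|t\varphi_1\|^{2}\to L/\lambda_1>0$ along the first Dirichlet eigenfunction. The paper's proof simply asserts that ``$I_1$ satisfies $(F3)$'' and glosses over this point. Your repair --- observing that $(F(s)/s^{2})'=G(s)/s^{3}\le 0$ on $(0,\infty)$ forces $F(s)\le As^{2}+C_0$, and that in the only place the $\limsup$ is actually used (the coercivity step of Lemma~\ref{l1}(2)) one already knows $v_n=u_n/\|u_n\|\rightharpoonup 0$, hence $\|v_n\|_2\to 0$ and $\limsup K(u_n)/\|u_n\|^2\le 0$ along the relevant sequences --- is exactly what that lemma needs and restores the conclusion. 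The remaining verifications in your write-up ((F1)/(F2) via the sign and pointwise monotonicity of $G$ together with Fatou's lemma, the inequality $(J'(u)-J'(v))(u-v)=\|u-v\|^2\ge(\|u\|-\|v\|)^2$ giving $\eta=2$, and the choice of a nonnegative ground state via $\lambda_c(|u|)=\lambda_c(u)$) coincide with what the paper leaves implicit and are correct.
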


\begin{proof}
Let $I_1,I_2$ be given by $I_1(u)=\frac{1}{2}\int_\Omega |\nabla u|^2-\int_\Omega F(u)$ and $I_2(u)=\frac{1}{2}\int_\Omega u^2$ for $u \in H_0^1(\Omega)$. Then $I_2$ is $2$-homogeneous and $I_2'$ clearly is completely continuous. Moreover $I_1$ satisfies $(F3)$ with $J(u)=\frac{1}{2}\int_\Omega |\nabla u|^2$ and $K(u)=\int_\Omega F(u)$. Note in particular that $H(u)=-\int_\Omega G(u)$ satisfies $(F1)$ (respect. $(F2)$) if $c>0$ (respect. $c<0$), and it is weakly continuous by Remark \ref{r1}.
\end{proof}

The condition $(f2)$ above can be replaced by the following one, which is standard when applying the Nehari manifold method to \eqref{semi}, cf. \cite[Theorem 16]{SW}:
\begin{itemize}
	\item[(f2')]  $s \mapsto \frac{f(s)}{|s|}$ is increasing (respect. decreasing) in $\mathbb{R} \setminus \{0\}$, and $\displaystyle \lim_{|s| \to \infty} \frac{f(s)}{s}=\infty$ (respect. $-\infty$).
\end{itemize}
Indeed, this condition is slightly stronger than $(f2)$, see e.g. \cite{FS,L}.
We deduce then the following result:

\begin{corollary}\label{c3}
Let  $c>0$ (respect. $c<0$) and $f\in C(\mathbb{R})$ satisfy (f1) and (f2'). Then the conclusions of Corollary \ref{c2} remain valid.
\end{corollary}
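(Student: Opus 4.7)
The plan is to deduce Corollary \ref{c3} directly from Corollary \ref{c2} by verifying that (f2') implies (f2); once that implication is in hand the conclusion follows immediately by invoking Corollary \ref{c2} on the same $f$. Setting $\varphi(s) := f(s)/|s|$ for $s\neq 0$ and $G(s):=sf(s)-2F(s)$ as in (f2), one would, if $f$ were of class $C^1$, simply observe that $\tfrac{d}{ds}\varphi(s) = \varepsilon\, G'(s)/s^2$ for a sign $\varepsilon$ depending on $\mathrm{sgn}(s)$, which would give the monotonicity claim for $G$ at once. Since $f$ is only continuous, I would instead rely on a direct integral inequality.

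Concretely, for $c>0$ and $0<s<t$, the monotonicity of $\varphi$ gives $f(\tau)=\tau\varphi(\tau)\le \tau\varphi(t)$ on $[s,t]$, so that $2\int_s^t f(\tau)\,d\tau\le\varphi(t)(t^2-s^2)$. Substituting into
\[
G(t)-G(s)=tf(t)-sf(s)-2\int_s^t f(\tau)\,d\tau
\]
and simplifying produces the key inequality
\[
G(t)-G(s)\ \ge\ s^2\bigl(\varphi(t)-\varphi(s)\bigr)\ \ge\ 0,
\]
so $G$ is increasing on $(0,\infty)$. The parallel computation on $(-\infty,0)$, using the identity $f(\tau)=-\tau\varphi(\tau)$ together with $\varphi(\tau)\ge \varphi(s)$ for $\tau\in[s,t]\subset(-\infty,0)$, yields $G(s)-G(t)\ge t^2(\varphi(t)-\varphi(s))\ge 0$, i.e.\ $G$ is decreasing on $(-\infty,0)$, as required by (f2).

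The same inequalities immediately deliver the required unboundedness: fixing $s>0$ and letting $t\to+\infty$, since $\varphi(t)=f(t)/t\to+\infty$ we obtain $G(t)\to+\infty$; fixing $t<0$ and letting $s\to-\infty$, the relation $\varphi(s)=-f(s)/s\to-\infty$ similarly forces $G(s)\to+\infty$. The case $c<0$ is treated identically, with all inequalities and limits reversed. I do not foresee a serious obstacle; the entire argument is a short calculus-style estimate, and the only technical point is the absence of a $C^1$ hypothesis on $f$, which is cleanly handled by the integral inequality above in the spirit of the references \cite{FS,L} cited by the authors.
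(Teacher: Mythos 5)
Your proposal is correct and follows exactly the route the paper takes: the paper's entire ``proof'' of Corollary \ref{c3} is the remark that (f2') is slightly stronger than (f2) (with the verification delegated to the references \cite{FS,L}), after which Corollary \ref{c2} applies verbatim. Your integral inequality $G(t)-G(s)\ge s^2\bigl(\varphi(t)-\varphi(s)\bigr)$ and its counterpart on $(-\infty,0)$ supply a clean, self-contained justification of that implication, including the limit $G(s)\to\infty$ as $|s|\to\infty$, so nothing is missing.
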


\begin{remark}
Note that $(f2')$ implies that $\displaystyle \lim_{s \to 0^{\pm}}  \frac{f(s)}{s}<\infty$ (respect. $>-\infty$). So at zero $f$ may be superlinear ($\displaystyle \lim_{s \to 0}  \frac{f(s)}{s}=0$), asymptotically linear ($\displaystyle \lim_{s \to 0}  \frac{f(s)}{s}=l \in \mathbb{R} \setminus \{0\}$), or satisfy $\displaystyle \lim_{s \to 0}  \frac{f(s)}{s}=-\infty$ (respect. $\infty$).

\end{remark}

\begin{remark}
Corollary \ref{c2} has a natural extension to the $p$-Laplacian problem
\[
\begin{cases}
-\Delta_p u=\lambda |u|^{p-2}u +f(u) &\mbox{ in } \Omega,\\
u=0 &\mbox{ on } \partial \Omega,
\end{cases}
\]
if we assume that $f$ satisfies (f1) and (f2) with $2$ replaced by $p$, and 
 \begin{equation*}
\Phi_\lambda(u)=\frac{1}{p}\int_{\Omega} |\nabla u|^p-\frac{\lambda}{p}\int_{\Omega} |u|^p-\int_{\Omega} F(u), \quad u \in W_0^{1,p}(\Omega).
\end{equation*}
We may also deduce the analogue of Corollary \ref{c3}, assuming now (f2') with 
$\frac{f(s)}{|s|}$ replaced by $\frac{f(s)}{|s|^{p-1}}$ and $\frac{f(s)}{s}$ by $ \frac{f(s)}{|s|^{p-2}s}$.
\end{remark}

Let us consider now the problem \begin{equation}\label{cc}
\begin{cases}
-\Delta u=\lambda |u|^{q-2}u +f(u) &\mbox{ in } \Omega,\\
u=0 &\mbox{ on } \partial \Omega,
\end{cases}
\end{equation}
with $1<q<2$ and $f\in C^1(\mathbb{R})$ being subcritical and superlinear, in which case the problem has a concave-convex nature.

We have now  $I_1(u)=\frac{1}{2}\int_\Omega |\nabla u|^2-\int_\Omega F(u)$ and $I_2(u)=\frac{1}{q}\int_\Omega |u|^q$ for $u \in H_0^1(\Omega)$, so that
\begin{equation}\label{lex}
\lambda_c(u)=\frac{\frac{q}{2}\int_\Omega |\nabla u|^2-q\int_\Omega F(u)-qc}{\int_\Omega |u|^q},
\end{equation}
for $u \in H_0^1(\Omega) \setminus \{0\}$.
Thus $I_2$ is now $q$-homogeneous, and
$$H(u)=\frac{2-q}{2}\int_\Omega |\nabla u|^2+\int_\Omega \left(qF(u)-f(u)u\right).$$
Thus $H$ is weakly lower semicontinuous.

We shall assume that 

	\begin{enumerate}
		\item [(f3)] $t\mapsto \frac{(q-1)f(t)}{t}-f'(t)$ is decreasing in $(0,\infty)$ and increasing in $(-\infty,0)$, and $$\displaystyle \lim_{|t|\to \infty} \left[\frac{(q-1)f(t)}{t}-f'(t)\right]=-\infty.$$\\
	\end{enumerate} 

This condition is clearly satisfied by $f(t)=|t|^{p-2}t$, as well as $f(t)=|t|^{p-2}t \ln (|t|+1)$ and $f(t)=\frac{|t|^{p-1}t}{|t|+1} $ with $p \in (2,2^*)$.
\begin{corollary}
	Let  $c>0$, $1<q<2$, and $f\in C^1(\mathbb{R})$ with $f'(0)=f(0)=0$. Assume in addition (f1) and (f3). Then the conclusions of Corollary \ref{c2} remain valid for \eqref{cc}.
\end{corollary}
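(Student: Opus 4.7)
The plan is to verify the hypotheses of Theorem~\ref{c1} (and invoke Corollary~\ref{cc1}) in the setting $X = H_0^1(\Omega)$ with $I_1(u) = \frac{1}{2}\int_\Omega|\nabla u|^2 - \int_\Omega F(u)$ and $I_2(u) = \frac{1}{q}\int_\Omega |u|^q$, so that $\lambda_c$ is exactly \eqref{lex}. The conditions $I_1(0) = I_2(0) = 0$, $I_2 > 0$ on $X\setminus\{0\}$, the $q$-homogeneity of $I_2$ (with $\alpha = q > 1$), and the complete continuity of $I_2'$ (from the compact Sobolev embedding $H_0^1 \hookrightarrow L^q$) are standard. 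Taking $J(u) = \frac{1}{2}\|u\|^2$ and $K(u) = \int_\Omega F(u)$ validates $(F3)$: $\beta = 2 \geq q = \alpha$, the required inequality holds with $\eta = 2$ and $C_2 = 1$ since $\|u - v\|^2 \geq (\|u\| - \|v\|)^2$, and $K'$ is completely continuous by $(f1)$ combined with subcritical Sobolev compactness. The weak lower semicontinuity of $H(u) = \frac{2-q}{2}\int_\Omega|\nabla u|^2 + \int_\Omega(qF(u) - f(u)u)$ is immediate, since $q < 2$ makes the first term weakly lsc and the second term is weakly continuous under $(f1)$.

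It remains to check condition $(F1)$. I first compute
$$\frac{d}{dt}H(tu) = t\left\{(2-q)\int_\Omega|\nabla u|^2 + \int_\Omega u^2\, g(tu)\right\},$$
where $g(s) := \frac{(q-1)f(s)}{s} - f'(s)$ is extended continuously by $g(0) = 0$, using $f(0) = f'(0) = 0$ together with $f \in C^1$. Hypothesis $(f3)$ says that $g$ attains its maximum $0$ at $s = 0$, is strictly decreasing on $(0,\infty)$, strictly increasing on $(-\infty,0)$, and $g(s) \to -\infty$ as $|s| \to \infty$. Hence for each $x$ with $u(x) \neq 0$, the map $t \mapsto g(tu(x))$ is strictly decreasing from $0$ toward $-\infty$ (the sign of $u(x)$ being handled separately). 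The bracketed expression therefore decreases strictly from $(2-q)\|\nabla u\|^2 > 0$ at $t = 0^+$ to $-\infty$ by monotone convergence, producing a unique $s(u) > 0$ at which $t \mapsto H(tu)$ switches from increasing to decreasing.

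The main obstacle is the uniformity clause in $(F1)$: $H(tu) \to -\infty$ uniformly on weakly compact $K \subset X \setminus \{0\}$. Since $K$ is bounded in $H_0^1$, compactly embedded into $L^2$, and weakly closed with $0 \notin K$, one obtains $c_0 := \inf_{u \in K}\|u\|_2 > 0$. Integrating the identity $\frac{d}{ds}(qF(s) - sf(s)) = s\,g(s)$ and using $g(s) \to -\infty$ produces, for any prescribed $M > 0$, constants $s_M, C_M > 0$ with $qF(s) - sf(s) \leq C_M - \frac{M}{2}s^2$ for all $s \in \mathbb{R}$. Splitting the integral over $\{|u| \leq s_M/t\}$ and its complement and using $\|u\|_2 \geq c_0$ leads to
$$H(tu) \leq \left[\frac{(2-q)\sup_{v \in K}\|v\|^2}{2} - \frac{Mc_0^2}{4}\right]t^2 + C_M'$$
for $t$ sufficiently large uniformly in $u \in K$. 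Choosing $M$ large renders the bracket negative, yielding the required uniform divergence. With $(F1)$, $(F3)$ and the weak lower semicontinuity of $H$ in hand, Corollary~\ref{cc1}(1) yields assertion~(1) of Corollary~\ref{c2} for \eqref{cc}; when $f$ is odd, $I_1$ and $I_2$ are even and Corollary~\ref{cc1}(2) yields assertion~(2), nonnegativity of $u_{1,c}$ following from $H(|u|) = H(u)$ and $\lambda_c(|u|) = \lambda_c(u)$, so that $|u_{1,c}|$ is also a ground state.
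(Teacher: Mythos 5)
Your proof is correct and follows essentially the same route as the paper: verify $(F1)$ by computing $\frac{d}{dt}H(tu)=t\bigl[(2-q)\int_\Omega|\nabla u|^2+\int_{u\neq 0}u^2\bigl(\tfrac{(q-1)f(tu)}{tu}-f'(tu)\bigr)\bigr]$ and using (f3) together with $f(0)=f'(0)=0$, verify $(F3)$ with $J(u)=\frac12\|u\|^2$ and $K(u)=\int_\Omega F(u)$, note the weak lower semicontinuity of $H$, and invoke Theorem \ref{c1}/Corollary \ref{cc1}. The only difference is that you spell out the uniform divergence $H(tu)\to-\infty$ on weakly compact sets via the quantitative bound $qF(s)-sf(s)\le C_M-\frac{M}{2}s^2$, a detail the paper states more briefly; this is a welcome addition, not a deviation.
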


\begin{proof}
Let us start showing that $(F1)$ is satisfied. We have
$$H(tu)=\frac{2-q}{q}t^2\int_\Omega |\nabla u|^2+\int_\Omega \left(qF(tu)-f(tu)tu\right),$$
so $$\frac{d}{dt} H(tu)=t\left[ \frac{(2-q)2}{q}\int_\Omega |\nabla u|^2+\int_\Omega \left(\frac{(q-1)f(tu)tu-f'(tu)(tu)^2}{t^2}\right)\right].$$
Note also that $$\int_\Omega \left(\frac{(q-1)f(tu)tu-f'(tu)(tu)^2}{t^2}\right)= \int_{u \ne 0} \left((q-1)\frac{f(tu)}{tu}-f'(tu)\right)u^2.$$
Moreover, from $f'(0)=f(0)=0$ we have $\frac{f(s)}{s}\to 0$ as $s \to 0$. 
By (f3) we infer that the map $t \mapsto \int_\Omega \left(\frac{(q-1)f(tu)tu-f'(tu)(tu)^2}{t^2}\right)$ is decreasing and goes to $0$ (respect $-\infty$) as $t \to 0$ (respect. $t \to \infty$). Finally, since (f3) yields that $\frac{qF(s)-f(s)s}{s^2} \to -\infty$ as $|s| \to \infty$, we deduce that $$\int_\Omega \frac{qF(tu)-f(tu)tu}{t^2} \to -\infty \quad \mbox{as} \quad t \to \infty$$ uniformly on weaky compact subsets of $X \setminus \{0\}$, i.e. $(F_1)$ is satisfied. Note also that $(F3)$ holds with $J(u)=\frac{1}{2}\int_\Omega |\nabla u|^2$ and $K(u)=\int_\Omega F(u)$.
\end{proof}

\subsection{Some examples with $H$ not weakly lower semicontinuous}

Instead of $(F3)$ we shall assume now that\\

\begin{itemize}
	\item[(F3')] $I_1$ is weakly lower semicontinuous, bounded on bounded sets, and $I_1(u)\geq C_1\|u\|^{\beta}$  for some $C_1>0$, $\beta>1$, and any $u \in X$.\\
\end{itemize}

\begin{lemma}\label{l3}
Let $c>0$ and assume that $I_2$ is $\alpha$-homogeneous and weakly continuous,  and $I_1$ satisfies $(F1)$ and $(F3')$.
\begin{enumerate}
\item  If $(u_n) \subset \mathcal{N}(\lambda_c)$ is such that $\|u_n\| \to \infty$ or $u_n \rightharpoonup 0$ in $X$, then $\lambda_c(u_n) \to \infty$. In particular, 
$\lambda_c$ is coercive and bounded from below on $\mathcal{N}(\lambda_c)$.
\item  Let $(u_n) \subset \mathcal{S}$. Then $u_n \rightharpoonup 0$ in $X$ if and only if $\lambda_c(t_c(u_n)u_n) \to \infty$. 
In particular $\lambda_c$ satisfies (H3).
\end{enumerate}

\end{lemma}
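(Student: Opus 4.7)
The plan is to model the proof on Lemma~\ref{l1}(2)--(3), replacing the monotonicity-type estimate from $(F3)$ (no longer available) by the fibering identity on $\mathcal{N}(\lambda_c)$ together with the pointwise growth bound $I_1(u)\ge C_1\|u\|^{\beta}$ furnished by $(F3')$. The key observation is that for every $u \in \mathcal{N}(\lambda_c)$ one has $u = t_c(v)v$ with $v := u/\|u\| \in \mathcal{S}$: indeed $t=1$ is a critical point of $t\mapsto \lambda_c(tu)$ and Lemma~\ref{l0}(1) gives uniqueness on the ray, so $t_c(v)=\|u\|$; since $c>0$ this $t_c(v)$ is a global maximum, whence
$$\lambda_c(u) \;=\; \max_{t>0}\lambda_c(tv) \;\ge\; \frac{I_1(tv)-c}{t^{\alpha}I_2(v)} \;\ge\; \frac{C_1 t^{\beta}-c}{t^{\alpha}I_2(v)} \qquad \text{for every } t>0.$$
The main obstacle I anticipate is precisely extracting coercivity and (H3) from this single inequality, with no a priori control on where $t_c(v_n)$ actually lies.

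For part (1), I would first show that $v_n := u_n/\|u_n\| \rightharpoonup 0$. If $\|u_n\|\to\infty$, the argument of Lemma~\ref{l1}(2) applies unchanged: otherwise a subsequence would give $v_n\rightharpoonup v\neq 0$, the set $\{v_n\}\cup\{v\}$ would be weakly compact in $X\setminus\{0\}$, and $(F1)$ would force $H(\|u_n\|v_n)\to-\infty$, contradicting $H(u_n)=-\alpha c$. If instead $u_n\rightharpoonup 0$, then $(u_n)$ is bounded and the continuity of $H$ with $H(0)=0$ and $H(u_n)=-\alpha c\neq 0$ rules out $u_n\to 0$ in norm, so up to a subsequence $\|u_n\|\to r>0$ and again $v_n\rightharpoonup 0$. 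Weak continuity of $I_2$ then gives $I_2(v_n)\to I_2(0)=0$, and choosing any fixed $t$ with $C_1 t^{\beta}>c$ in the displayed estimate yields $\lambda_c(u_n)\to\infty$. Boundedness from below follows by contradiction: if $\lambda_c(u_n)\to-\infty$ then the coercivity just proved forces $(u_n)$ to be bounded, up to a subsequence $u_n\rightharpoonup u$, the case $u=0$ is excluded by what was just shown, and the weak lower semicontinuity in $(F3')$ together with weak continuity of $I_2$ delivers $\liminf\lambda_c(u_n)\ge (I_1(u)-c)/I_2(u) > -\infty$, a contradiction.

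For part (2), the forward direction follows by applying (1) to $w_n := t_c(u_n)u_n \in \mathcal{N}(\lambda_c)$: along any subsequence one has either $\|w_n\|=t_c(u_n)\to\infty$, or $(t_c(u_n))$ is bounded and then $w_n\rightharpoonup 0$ because $u_n\rightharpoonup 0$. For the converse, suppose some subsequence satisfies $u_{n_k}\rightharpoonup u\neq 0$; the contrapositive of Lemma~\ref{l0}(2) provides a further subsequence along which $(t_c(u_{n_k}))$ is bounded, whence $(w_{n_k})$ is bounded, $I_1(w_{n_k})$ is bounded by the second clause of $(F3')$, and $I_2(w_{n_k})=t_c(u_{n_k})^{\alpha}I_2(u_{n_k})$ remains both bounded and bounded away from zero (using weak continuity of $I_2$, $I_2(u)>0$, and the positive lower bound on $t_c$ on $\mathcal{S}$ from (H1)), so $\lambda_c(w_{n_k})$ stays bounded, contradicting $\lambda_c(w_n)\to\infty$.
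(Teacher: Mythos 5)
Your proposal is correct and follows essentially the same route as the paper: the key inequality $\lambda_c(u_n)=\max_{t>0}\lambda_c(tv_n)\ge \frac{C_1t^{\beta}-c}{t^{\alpha}I_2(v_n)}$ combined with the weak continuity of $I_2$ is exactly the paper's argument for coercivity and (H3), and your treatment of the converse in part (2) (boundedness of $t_c(u_{n_k})$ from Lemma~\ref{l0}, then boundedness of $I_1$ on bounded sets and $I_2(u)>0$) matches the paper's. The only cosmetic differences are that you normalize to $v_n$ in both subcases of part (1) where the paper scales $u_n$ directly in the second subcase, and that your subsequence bookkeeping in part (2) should be phrased as ``every subsequence admits a further subsequence along which either $t_c\to\infty$ or $t_c$ is bounded''; neither affects correctness.
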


\begin{proof}\strut
\begin{enumerate}
\item Indeed, repeating the proof of Lemma \ref{l1} we see that if $(u_n) \subset \mathcal{N}(\lambda_c)$ and $\|u_n\| \to \infty$ then  $v_n=\frac{u_n}{\|u_n\|}\rightharpoonup 0$ in $X$. Moreover, by Lemma \ref{l0} and $(F3')$, we have
$$\lambda_c(u_n) \ge \lambda_c(tv_n) \geq \frac{C_1 t^{\beta} -c}{t^{\alpha}I_2(v_n)}, \quad \forall t>0.$$
Chosing $t>0$ such that $C_1 t^{\beta} -c>0$ and letting $n \to \infty$ we obtain $\lambda_c(u_n)\to \infty$. If now $u_n \rightharpoonup 0$ in $X$ then $u_n \not \to 0$ in $X$, and thus $$\lambda_c(u_n) \ge \lambda_c(tu_n) \geq \frac{C_1 t^{\beta}\|u_n\|^{\beta} -c}{t^{\alpha}I_2(u_n)} \ge \frac{C_2 t^{\beta} -c}{t^{\alpha}I_2(u_n)}, \quad \forall t>0,$$
which yields again $\lambda_c(u_n)\to \infty$.\\ 

\item The latter argument in the previous item shows that if $(u_n) \subset \mathcal{S}$ and $u_n \rightharpoonup 0$ in $X$ then $\lambda_c(t_c(u_n)u_n) \to \infty$. Finally, if $u_n \rightharpoonup u \neq 0$ in $X$ then $(t_c(u_n))$ is bounded (by Lemma \ref{l0}) and away from zero, so we can assume that  $t_c(u_n)u_n \rightharpoonup t_0u \neq 0$ in $X$. Since $I_1$ is bounded on bounded sets and $I_2$ is weakly continuous, we infer that 
$$\lambda_c(t_c(u_n)u_n)=\frac{I_1(t_c(u_n)u_n)-c}{I_2(t_c(u_n)u_n)}$$ remains bounded.
\end{enumerate}	
\end{proof}

Let us consider two problems where $c>0$ and  $H$ is not weakly lower semicontinuous, so that Theorem \ref{c1} does not apply.

\subsection*{A generalized $(p,q)$-Laplacian problem}

Our first application deals with the functional
\[
\Phi_\lambda(u)= \frac{1}{p}\displaystyle\int_{\Omega}A(|\nabla u|^p)-\frac{\lambda}{r}\int_\Omega |u|^r, \quad u \in W_0^{1,p}(\Omega),
\]
where $1<r<p^*$. Here $A(t):=\displaystyle\int_{0}^{t}a(s) ds$ for $t \ge 0$, and $a:[0,\infty)\rightarrow
[0,\infty)$ is a $C^{1}$ function satisfying the following conditions:\\
\begin{itemize}
	\item[(A1)] $k_0\left( 1+t^{\frac{q-p}{p}}\right) \leq a(t) \leq k_1\left( 1+t^{\frac{q-p}{p}}\right)$ for every  $t>0$, and some constants $k_0,k_1>0$ and $1<q\le p$ such that $k_1<\frac{r}{p} k_0$.
	\item[(A2)]  $t \mapsto a(t^p)t^p-\frac{r}{p}A(t^p)$ is decreasing in $(0,\infty)$.\\
	\item[(A3)]  $t \mapsto A(t^p)$ is convex in  $(0,\infty)$.\\
\end{itemize}
This choice of $\Phi_\lambda$ corresponds to the quasilinear problem
\begin{equation}\label{quasig}
-div \left(a(|\nabla u|^p)|\nabla u|^{p-2}\nabla u\right) = \lambda |u|^{r-2}u, \quad u \in W_0^{1,p}(\Omega),
\end{equation}
which is a generalized $(p,q)$-Laplacian problem, as it becomes
\begin{equation*}
-\Delta_p u -\Delta_q u = \lambda |u|^{r-2}u, \quad u \in W_0^{1,p}(\Omega),
\end{equation*}
 if we choose $a(t)=1+t^{\frac{q-p}{p}}$. In this case (A1) and (A2) hold with $k_0=k_1=1$, and $r>p$.

\begin{corollary}
Under the previous conditions, let $c>0$.
Then for every $n\in \mathbb{N}$ the problem \eqref{quasig} has a pair of nontrivial solutions $\pm u_{n,c}$ having energy $c$ for $\lambda=\lambda_{n,c}$. Moreover, for $\lambda=\lambda_{1,c}$ the solution $u_{1,c}$ can be chosen nonnegative. Finally, \eqref{quasig} has no solution having energy $c$ for $\lambda<\lambda_{1,c}$.
\end{corollary}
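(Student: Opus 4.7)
The plan is to check the hypotheses of Theorem \ref{tn} through the refinement provided by Lemma \ref{l3} (which substitutes $(F3')$ for $(F3)$, appropriate here because $H$ is not weakly lower semicontinuous), and then to pass to the conclusions via Corollary \ref{cc1}. With $I_1(u)=\tfrac{1}{p}\int_\Omega A(|\nabla u|^p)$ and $I_2(u)=\tfrac{1}{r}\int_\Omega |u|^r$ one has $\Phi_\lambda=I_1-\lambda I_2$, $I_2$ is $r$-homogeneous with $I_2'$ completely continuous (by Rellich--Kondrachov, since $r<p^*$), both functionals are even, and $I_1,I_2\in C^1(W^{1,p}_0(\Omega))$.

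To verify $(F1)$, a direct computation gives
\[
H(u)=I_1'(u)u-rI_1(u)=\int_\Omega\!\Big[a(|\nabla u|^p)|\nabla u|^p-\tfrac{r}{p}A(|\nabla u|^p)\Big].
\]
The pointwise map $t\mapsto a(t^p)t^p-(r/p)A(t^p)$ vanishes at $0$ and is decreasing on $(0,\infty)$ by $(A2)$, so $t\mapsto H(tu)$ is decreasing on $(0,\infty)$, i.e.\ $(F1)$ holds with $s(u)=0$. For the uniform $-\infty$ limit, $(A1)$ together with $q\le p$ yields
\[
a(s)s-\tfrac{r}{p}A(s)\ \le\ (k_1-\tfrac{r}{p}k_0)\,s+(k_1-\tfrac{r}{q}k_0)\,s^{q/p},
\]
where both coefficients are strictly negative by $k_1<(r/p)k_0$; any weakly compact subset of $X\setminus\{0\}$ is norm-bounded and bounded away from $0$, which gives $H(tu)\to-\infty$ uniformly as $t\to\infty$. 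Condition $(F3')$ is then immediate: by $(A3)$, $\xi\mapsto A(|\xi|^p)$ is convex on $\mathbb{R}^N$, so $I_1$ is convex and continuous, hence weakly lower semicontinuous; $(A1)$ gives $I_1(u)\ge (k_0/p)\|u\|^p$ and boundedness of $I_1$ on bounded sets.

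The main obstacle is the Palais--Smale condition on $\mathcal{N}(\lambda_c)$. Given a PS sequence $(u_n)$, Lemma \ref{l3}(1) provides boundedness, so up to a subsequence $u_n\rightharpoonup u$; Lemma \ref{l3}(2) and the assumption $\lambda_c(u_n)\to d$ force $u\neq 0$. Lemma \ref{l00} turns $(u_n)$ into a PS sequence for $\Phi_d$ at level $c$, and the compactness of the embedding $W^{1,p}_0(\Omega)\hookrightarrow L^r(\Omega)$ together with $\Phi_d'(u_n)\to 0$ yields $I_1'(u_n)(u_n-u)\to 0$ and (via convexity of $I_1$) $I_1(u_n)\to I_1(u)$. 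The delicate step is to promote weak to strong convergence in $W^{1,p}_0(\Omega)$: the pointwise monotonicity of $\xi\mapsto a(|\xi|^p)|\xi|^{p-2}\xi$ (coming from $(A3)$) together with the uniform ellipticity $a\ge k_0>0$ from $(A1)$ and $p>1$ gives an $(S_+)$-type property, so that $(I_1'(u_n)-I_1'(u))(u_n-u)\to 0$ implies $u_n\to u$ strongly. This is where the specific form of the operator and the strict convexity of $\xi\mapsto |\xi|^p$ must be exploited carefully (e.g.\ via a Brezis--Lieb/Visintin argument on $|\nabla u_n|$ combined with uniform convexity of $L^p$).

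With $(H1)$, $(H2)$, $(H3)$ established, Theorem \ref{tn} yields an unbounded nondecreasing sequence $(\lambda_{n,c})$ of critical values of $\lambda_c$, each with an associated pair of critical points, as well as a ground state at level $\lambda_{1,c}$. Corollary \ref{cc1} translates these into pairs of nontrivial solutions $\pm u_{n,c}$ of \eqref{quasig} at energy $c$ with $\lambda=\lambda_{n,c}$, and provides the nonexistence statement for $\lambda<\lambda_{1,c}$. Finally, $u_{1,c}$ can be chosen nonnegative because $|\nabla|u|\,|=|\nabla u|$ a.e., so both $I_1$ and $I_2$ are invariant under $u\mapsto|u|$, and $\lambda_c(|u_{1,c}|)=\lambda_c(u_{1,c})=\lambda_{1,c}$ together with $|u_{1,c}|\in\mathcal{N}(\lambda_c)$ preserves all the relevant identities.
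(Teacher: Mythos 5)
Your proposal is correct and follows essentially the same route as the paper: the same choice of $I_1,I_2$, the same verification of $(F1)$ via $(A2)$ and the $(A1)$ estimate on $a(s)s-\tfrac{r}{p}A(s)$, the same check of $(F3')$ through Lemma \ref{l3}, and the same resolution of the Palais--Smale condition via an $(S_+)$ property of $I_1'$ (which the paper settles by citing Lemma 2.7 of \cite{DU}, precisely the step you flag as delicate). The only addition is your explicit argument for the nonnegativity of $u_{1,c}$ via $|\nabla |u||=|\nabla u|$, which the paper delegates to the remark following Corollary \ref{cc1}.
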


\begin{proof}
We take now $I_1(u)=\frac{1}{p}\int_\Omega A(|\nabla u|^p)$ and $I_2(u)=\frac{1}{r}\int_\Omega |u|^r$ for $u \in X=W_0^{1,p}(\Omega)$, so that $\alpha=r$. Then $H(u)=\int_\Omega \left(a(|\nabla u|^p)|\nabla u|^p -\frac{r}{p}A(|\nabla u|^p)\right)$, and by (A2)
the map $t \mapsto H(tu)$ is decreasing in $(0,\infty)$. Moreover $\displaystyle \lim_{t \to \infty} H(tu)=-\infty$ uniformly on weakly compact subsets of $X \setminus \{0\}$. Indeed, 
note that (A1) yields that $$a(s^p)s^p-\frac{r}{p}A(s^p)\leq k_1(1+s^{q-p})s^p-\frac{r}{p}k_0\left(s^p+\frac{p}{q}s^q\right)=\left(k_1-\frac{r}{p}k_0\right) s^p +\left(k_1-\frac{r}{q}k_0\right) s^q.$$
Thus $$H(tu) \leq \left(k_1-\frac{r}{p}k_0\right) t^p \|u\|^p +\left(k_1-\frac{r}{q}k_0\right)t^q \int_\Omega |\nabla u|^q.$$
Since $k_1-\frac{r}{p}k_0<0$ and $q\le p$, we obtain the desired conclusion. Lemma \ref{l0} yields that $\lambda_c$ satisfies (H1).
Note also that $I_1$ is weakly lower semicontinuous by $(A3)$, and  by $(A1)$ it is bounded on bounded sets, and satisfies
$I_1(u)\ge k_0 \|u\|^p$ for any $u \in X$. Thus (F3') is satisfied, and by Lemma \ref{l3} we infer that $\lambda_c$ is bounded from below on $\mathcal{N}(\lambda_c)$ and satisfies (H3).

Finally, if $(u_n) \subset \mathcal{N}(\lambda_c)$ is a Palais-Smale sequence for $\lambda_c$, then arguing as in Lemma \ref{l1}(4) we find that $(u_n)$ is bounded and $\Phi_{\lambda_c(u_n)}'(u_n) \to 0$. By Lemma \ref{l3} we know that $u_n \not \rightharpoonup 0$. The complete continuity of $I_2$ yields that $I_1'(u_n) \to 0$. The strict convexity of $t \mapsto A(t^p)$ implies that $I_1'$ belongs to the class $(S_+)$ cf. \cite[Lemma 2.7]{DU}, and consequently $(u_n)$ has a convergent subsequence in $X \setminus \{0\}$. Therefore $\lambda_c$ satisfies the Palais-Smale condition on $\mathcal{N}(\lambda_c)$, and Theorem \ref{tn} yields the conclusion.
\end{proof}

\subsection*{The Brezis-Nirenberg problem}

We consider now the problem \eqref{semi} with $f(s)=|s|^{2^*-1}s$, so that $F(s)=\frac{1}{2^*}|s|^{2^*}$ for $s \in \mathbb{R}$. Let $\lambda_c$ be given by \eqref{lex}, i.e.
 \[
\lambda_c(u)=\frac{\int_\Omega |\nabla u|^2-\frac{2}{2^*}\int_\Omega |u|^{2^*} -2c}{\int_\Omega |u|^2},
\]
for $u \in H_0^1(\Omega) \setminus \{0\}$.
Then $$H(u)=\int_\Omega \left(2F(u)-f(u)u\right) = \frac{2-2^*}{2^*} \int_\Omega |u|^{2^*}=-\frac{2}{N}\|u\|_{2^*}^{2^*},$$ so that
\begin{equation}\label{ncc}
\mathcal{N}(\lambda_c)=\{u \in H_0^1(\Omega): \|u\|_{2^*}^{2^*}=Nc\}.
\end{equation}
Thus for any $u \in X \setminus \{0\}$ the map $t \mapsto H(tu)$ is decreasing in $(0,\infty)$. Moreover, since $H$ is weakly upper semicontinuous, we see that $\displaystyle \lim_{t \to \infty} H(tu)=-\infty$ uniformly on weakly compact subsets of $X \setminus \{0\}$. Therefore  $(F1)$ holds, and Lemma \ref{l0} implies that $\lambda_c$ satisfies (H1).

\begin{lemma}
	Let $c>0$. Then:
\begin{enumerate}
\item $\lambda_c$ is coercive on  $\mathcal{N}(\lambda_c)$.
\item Assume in addition $c<\frac{S^{\frac{N}{2}}}{N}$, where $S$ is the best constant of the embedding $H_0^1(\Omega) \subset L^{2^*}(\Omega)$. If $(u_n) \subset \mathcal{N}(\lambda_c)$ with $u_n \rightharpoonup 0$ in $X$, then  $\lambda_c(u_n) \to \infty$. In particular $\lambda_c$ is bounded from below on  $\mathcal{N}(\lambda_c)$. Moreover, for any $(u_n) \subset \mathcal{S}$ we have $u_n \rightharpoonup 0$ in $X$ if and only if $\lambda_c(t_c(u_n)u_n) \to \infty$.
\end{enumerate}	
\end{lemma}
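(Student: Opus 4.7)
My plan is to reduce every estimate to a single clean expression for $\lambda_c$ on the Nehari set. Using \eqref{ncc} and the identity $\frac{2}{2^*}=\frac{N-2}{N}$, I would first observe that $\frac{2}{2^*}\|u\|_{2^*}^{2^*}+2c=Nc$ for every $u\in\mathcal{N}(\lambda_c)$, so that
\[
\lambda_c(u)=\frac{\|u\|^2-Nc}{\|u\|_2^2},\qquad u\in\mathcal{N}(\lambda_c).
\]
This formula will drive all subsequent estimates. For (1), the constraint $\|u\|_{2^*}^{2^*}=Nc$ combined with Hölder's inequality gives the uniform bound $\|u\|_2^2\leq|\Omega|^{2/N}(Nc)^{2/2^*}$ on $\mathcal{N}(\lambda_c)$; coercivity is then immediate, since $\|u_n\|\to\infty$ sends the numerator to $+\infty$ while the denominator stays bounded.

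For the heart of (2), I would show that $(u_n)\subset\mathcal{N}(\lambda_c)$ with $u_n\rightharpoonup 0$ in $H_0^1(\Omega)$ forces $\lambda_c(u_n)\to\infty$. The compact embedding $H_0^1(\Omega)\hookrightarrow L^2(\Omega)$ gives $\|u_n\|_2\to 0$, while Sobolev's inequality together with the Nehari constraint gives $\|u_n\|^2\geq S\|u_n\|_{2^*}^2=S(Nc)^{2/2^*}$. The hypothesis $c<S^{N/2}/N$ is exactly equivalent to $S(Nc)^{2/2^*}-Nc>0$, so the numerator stays bounded below by a positive constant and the ratio blows up. The boundedness from below of $\lambda_c$ on $\mathcal{N}(\lambda_c)$ is then obtained by contradiction: any sequence driving $\lambda_c$ to $-\infty$ is bounded by (1) and has a weak limit $u$, but $u=0$ is ruled out by the claim just proved while $u\neq 0$ keeps $\|u_n\|_2\to\|u\|_2>0$ and $\|u_n\|^2$ bounded, so $\lambda_c(u_n)$ remains bounded in that case too.

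For the equivalence on $\mathcal{S}$, substituting $v_n=t_c(u_n)u_n$ into the formula above and using $\|u_n\|=1$ yields
\[
\lambda_c(v_n)=\frac{1-Nc\,t_c(u_n)^{-2}}{\|u_n\|_2^2}.
\]
If $u_n\rightharpoonup 0$, Sobolev's inequality applied to $v_n\in\mathcal{N}(\lambda_c)$ gives $t_c(u_n)^2\geq S(Nc)^{2/2^*}$, hence $Nc\,t_c(u_n)^{-2}\leq(Nc)^{2/N}/S<1$ by the hypothesis, keeping the numerator above a positive constant, while $\|u_n\|_2^2\to 0$ by compactness. The converse I would prove by contraposition: if $u_n\rightharpoonup u\neq 0$ along a subsequence, Lemma \ref{l0} pins $t_c(u_n)$ into a compact subinterval of $(0,\infty)$, so $v_n\to t_0 u$ in $L^2$ with $\|v_n\|_2\to t_0\|u\|_2>0$ and $\|v_n\|^2=t_c(u_n)^2$ bounded, preventing $\lambda_c(v_n)\to\infty$. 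The main subtlety I anticipate is this forward direction: when $t_c(u_n)\to\infty$ the sequence $v_n$ is unbounded in $H_0^1(\Omega)$, so the claim of (2) cannot be transplanted to $v_n$; the closed-form expression for $\lambda_c(v_n)$ together with the subcriticality condition $c<S^{N/2}/N$ is what circumvents this difficulty.
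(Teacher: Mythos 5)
Your proof is correct, and its skeleton (closed form of $\lambda_c$ on $\mathcal{N}(\lambda_c)$ via $\frac{2}{2^*}Nc+2c=Nc$, Hölder for the $L^2$-bound, Sobolev plus compactness of $H_0^1\hookrightarrow L^2$ for the blow-up, Lemma \ref{l0} for the converse on $\mathcal{S}$) coincides with the paper's. The one genuine divergence is in the key quantitative step of (2): the paper exploits that $t=1$ is a global maximum of $t\mapsto\lambda_c(tu_n)$ to write $\lambda_c(u_n)\ge t^{-2}j(t)/\|u_n\|_2^2$ with $j(t)=S(Nc)^{2/2^*}t^2-\frac{2}{2^*}Nc\,t^{2^*}-2c$, and then computes $\max_{t>0}j(t)=\frac{2}{N}S^{N/2}-2c$, whereas you simply evaluate at the point itself and bound the numerator by $\|u_n\|^2-Nc\ge S(Nc)^{2/2^*}-Nc$, whose positivity is \emph{equivalent} to $c<S^{N/2}/N$ (indeed $S(Nc)^{2/2^*}>Nc\iff S^{N/2}>Nc$, and at the threshold the paper's optimal $t$ is exactly $1$, which is why nothing is lost). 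Your version is slightly more elementary — it uses only the constraint $\|u\|_{2^*}^{2^*}=Nc$ and Sobolev's inequality, not the fiber-maximality property — and it has the further advantage of making the forward implication of the $\mathcal{S}$-equivalence transparent: since $\lambda_c(t_c(u_n)u_n)=\bigl(1-Nc\,t_c(u_n)^{-2}\bigr)/\|u_n\|_2^2$ and $t_c(u_n)^2\ge S(Nc)^{2/2^*}$ uniformly, the possible unboundedness of $t_c(u_n)$ (which prevents one from literally reapplying the statement proved for weakly null sequences in $\mathcal{N}(\lambda_c)$) causes no harm, a point the paper passes over with ``the latter argument shows.''
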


\begin{proof}\strut
\begin{enumerate}
\item From \eqref{ncc} we have
$$\lambda_c(u)=\frac{\|u\|^2-\frac{2}{2^*} \|u\|_{2^*}^{2^*} -2c}{\|u\|_2^2}=\frac{\|u\|^2-\frac{2}{2^*}Nc -2c}{\|u\|_2^2}, \quad \forall u \in \mathcal{N}(\lambda_c).$$
Since $\mathcal{N}(\lambda_c)$ is bounded in $L^2(\Omega)$ we deduce that $\lambda_c$ is coercive therein.\\

 \item  Let $(u_n) \subset \mathcal{N}(\lambda_c)$ with $u_n \rightharpoonup 0$ in $X$. Then
 \begin{eqnarray*}
 \lambda_c(u_n)\ge \lambda_c(tu_n)&=&\frac{\|t u_n\|^2-\frac{2}{2^*} \|t u_n\|_{2^*}^{2^*} -2c}{\|t u_n\|_2^2}=\frac{\|u_n\|^2-\frac{2}{2^*}Nct^{2^*-2} -2ct^{-2}}{\|u_n\|_2^2}\\
 &\ge & \frac{S^{-1}(Nc)^{\frac{2}{2^*}}-\frac{2}{2^*}Nct^{2^*-2} -2ct^{-2}}{\|u_n\|_2^2}=t^{-2} \frac{j(t)}{\|u_n\|_2^2},
 \end{eqnarray*}
where $j(t):=S^{-1}(Nc)^{\frac{2}{2^*}}t^2-\frac{2}{2^*}Nct^{2^*} -2c$. Since $u_n \to 0$ in $L^2(\Omega)$ it suffices to have $\displaystyle \max_{t>0} j(t)>0$ to obtain the desired conclusion. A simple computation shows that $$\max_{t>0} j(t)=\frac{2^*-2}{2^*}S^{\frac{2^*}{2^*-2}}-2c,$$ so that $\displaystyle \max_{t>0} j(t)>0$ if and only if $c<\frac{S^{\frac{N}{2}}}{N}$.

Let now $(u_n) \subset \mathcal{S}$. The latter argument shows that $\lambda_c(t_c(u_n)u_n) \to \infty$ if $u_n \rightharpoonup 0$ in $X$. Finally, if $u_n \rightharpoonup u \neq 0$ then Lemma \ref{l0} yields that $(t_c(u_n))$ is bounded. Moreover, we know that $(t_c(u_n))$ is away from zero, so
$$\lambda_c(t_c(u_n)u_n)=\frac{1-\frac{2}{2^*}Nct_c(u_n)^{2^*-2} -2ct_c(u_n)^{-2}}{\|u_n\|_2^2} \leq C,$$ 
for some $C>0$.
\end{enumerate}	
\end{proof}

\begin{lemma}
Let $c<\frac{S^{\frac{N}{2}}}{N}$. Then $\lambda_c$ satisfies the Palais-Smale condition on $ \mathcal{N}(\lambda_c)$ at any level $d \in \mathbb{R}$. As a consequence, the conclusions of Corollary \ref{c2} remain valid.
\end{lemma}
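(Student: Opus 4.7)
The plan is to combine the coercivity from the preceding lemma with a Brezis--Lieb decomposition, the subcritical threshold $c<S^{N/2}/N$ being precisely what rules out concentration. Let $(u_n)\subset\mathcal{N}(\lambda_c)$ satisfy $\lambda_c(u_n)\to d$ and $\lambda_c'(u_n)\to 0$. Coercivity gives boundedness in $H^1_0(\Omega)$, hence up to a subsequence $u_n\rightharpoonup u$; the preceding lemma forces $u\neq 0$, since otherwise $\lambda_c(u_n)\to\infty$. As $I_2'$ is completely continuous, Lemma \ref{l00} promotes $(u_n)$ to a Palais--Smale sequence for $\Phi_d$ at level $c$.

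Next I would split energies via $v_n:=u_n-u\rightharpoonup 0$, using $v_n\to 0$ in $L^2(\Omega)$, the identity $\|u_n\|^2=\|v_n\|^2+\|u\|^2+o(1)$, and the Brezis--Lieb lemma $\|u_n\|_{2^*}^{2^*}=\|v_n\|_{2^*}^{2^*}+\|u\|_{2^*}^{2^*}+o(1)$. Passing to the weak limit in $\Phi_d'(u_n)\to 0$ yields $\Phi_d'(u)=0$, hence $\|u\|^2-d\|u\|_2^2=\|u\|_{2^*}^{2^*}$ and $\Phi_d(u)=\tfrac{1}{N}\|u\|_{2^*}^{2^*}\ge 0$. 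Pairing $\Phi_d'(u_n)\to 0$ with $u_n$ and inserting the splittings gives $\|v_n\|^2-\|v_n\|_{2^*}^{2^*}=o(1)$, so both sides share a common limit $\ell\ge 0$; the Sobolev inequality $\|v_n\|_{2^*}^{2^*}\le S^{-2^*/2}\|v_n\|^{2^*}$ then enforces the dichotomy $\ell=0$ or $\ell\ge S^{N/2}$. The decisive step is that plugging the same splittings into $\Phi_d(u_n)\to c$ yields $c=\Phi_d(u)+\ell/N$, so $\ell=N\bigl(c-\Phi_d(u)\bigr)\le Nc<S^{N/2}$, ruling out the bubbling alternative. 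Therefore $\ell=0$, $v_n\to 0$ in $H^1_0(\Omega)$, and $u_n\to u$ in $X\setminus\{0\}$.

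For the concluding assertion, the hypotheses of Theorem \ref{tn} are now all in place: (H1) follows from Lemma \ref{l0}, since $H(tu)=-\tfrac{2}{N}t^{2^*}\|u\|_{2^*}^{2^*}$ satisfies $(F1)$ with $s(u)=0$; (H3) and the lower bound in (H2) were established in the preceding lemma; $\lambda_c$ is even because $F(s)=|s|^{2^*}/2^*$ is even; and the Palais--Smale condition has just been proved. Theorem \ref{tn} thus delivers a ground state and an unbounded nondecreasing sequence of critical values $\lambda_{n,c}$, each carrying a pair of critical points, which through Corollary \ref{cc1} translates into the conclusions of Corollary \ref{c2}. The main obstacle throughout is the critical Sobolev growth, which disables standard compactness; the bound $c<S^{N/2}/N$ is tuned precisely so that the nonnegativity $\Phi_d(u)\ge 0$ is enough to keep $\ell$ below the bubbling threshold, uniformly in $d$.
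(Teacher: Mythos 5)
Your proof is correct and follows essentially the same route as the paper: reduce the Palais--Smale sequence for $\lambda_c$ to a Palais--Smale sequence for $\Phi_d$ at level $c$ via Lemma \ref{l00} (using boundedness of $I_2(u_n)$ and $I_2'(u_n)$), and then invoke compactness of $\Phi_d$ below the threshold $\frac{S^{N/2}}{N}$. The only difference is that the paper simply cites Brezis--Nirenberg for this last compactness property, whereas you reprove it in full with the Brezis--Lieb splitting and the estimate $\ell=N\bigl(c-\Phi_d(u)\bigr)\le Nc<S^{N/2}$; that argument is carried out correctly.
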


\begin{proof}
Let $(u_n) \subset \mathcal{N}(\lambda_c)$ be a Palais-Smale sequence for $\lambda_c$. Since $(u_n)$ is bounded in $L^{2^*}(\Omega)$ we see that $I_2(u_n)$ and $I_2'(u_n)$ are bounded if $I_2(u)=\frac{1}{2}\|u\|_2^2$. Hence by Lemma \ref{l00} we deduce that $(u_n)$ is a Palais-Smale sequence for $\Phi_d$ at the level $c$. Since $\Phi_d$ satisfies the Palais-Smale condition for $c<\frac{S^{\frac{N}{2}}}{N}$ (cf. \cite{BN}) we infer that $(u_n)$ has a convergent subsequence in $X \setminus \{0\}$. Thus we can apply Theorem \ref{tn} to get the desired conclusion.

\end{proof}

\medskip
\section{A problem involving the affine $p$-Laplacian}
\medskip

Let us now deal with the problem \eqref{P}. We shall apply Theorem \ref{tn} to the functional
\[
\Phi_{\cal A}(u) = \frac{1}{p} {\cal E}^p_{p,\Omega}(u) -  \int_\Omega F(u),  \quad u\in W^{1,p}_0(\Omega) \setminus \{0\},
\]
where ${\cal E}_{p,\Omega}$ is given by \eqref{dep}.
We set 
$$\mathcal{N}_{\cal A}:=\{u \in W^{1,p}_0(\Omega)  \setminus \{0\}:\, \Phi_{\cal A}'(u)u=0\}$$
and
\[
\mathcal{S}^{{\cal A}} := \{u \in W^{1,p}_0(\Omega):\, {\cal E}_{p,\Omega}(u) = 1\}.
\]

\begin{theorem}\label{tap}
Let $f \in C(\mathbb{R})$ satisfy the following conditions:
\begin{enumerate}
\item $|f(s)| \leq C(1+|s|^{r-1})$ for some $C>0$ and $1<r<p^*$, and every $s \in \mathbb{R}$. 
\item $f(s)=o(|s|^{p-1})$ as $s \to 0$.
\item $\frac{f(s)}{|s|^{p-1}}$ is strictly increasing in $\mathbb{R} \setminus \{0\}$.
\item $\frac{F(s)}{|s|^p} \to \infty$ as $|s| \to \infty$.
\end{enumerate}
Then the functional $\Phi_{\cal A}$ has a ground state level, which is achieved by a ground state solution $u_0$ of \eqref{P}. If, in addition, $f$ is odd, then we can choose $u_0 \geq0$ and $\Phi_{\cal A}$ has infinitely many pairs of critical points, which are solutions of \eqref{P}.
\end{theorem}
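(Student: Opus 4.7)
My plan is to apply Theorem \ref{tn} to $\Phi_{\cal A}$ on the uniformly convex Banach space $X = W^{1,p}_0(\Omega)$ by verifying the three abstract conditions (H1), (H2) and (H3); the ground state then corresponds to items (1)-(2) of that theorem, and the infinitely many pairs of critical points (in the odd case) come from item (2). That a ground state can be chosen nonnegative follows from the invariances ${\cal E}_{p,\Omega}(\abs{u}) = {\cal E}_{p,\Omega}(u)$ (since $\abs{\nabla \abs{u}} = \abs{\nabla u}$ a.e.\ in $\Omega$) and $\int_\Omega F(\abs{u}) = \int_\Omega F(u)$ when $F$ is even, so that $u \mapsto \abs{u}$ leaves both $\Phi_{\cal A}$ and $\mathcal{N}_{\cal A}$ invariant.

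To check (H1), I would use the $p$-homogeneity of ${\cal E}^p_{p,\Omega}$: for $u \neq 0$ and $t>0$,
$$
\Phi_{\cal A}(tu) = \frac{t^p}{p}{\cal E}^p_{p,\Omega}(u) - \int_\Omega F(tu),
$$
so $\frac{d}{dt}\Phi_{\cal A}(tu) = 0$ is equivalent, after dividing by $t^{p-1}$, to ${\cal E}^p_{p,\Omega}(u) = \int_{\{u\ne 0\}} \frac{f(tu)}{\abs{tu}^{p-2}(tu)}\,\abs{u}^p$. Condition (3) makes the right-hand side strictly increasing in $t>0$, (2) drives it to $0$ as $t \to 0^+$, and (4) to $+\infty$ as $t \to +\infty$; hence a unique $t(u)>0$ exists, necessarily a global maximum. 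The bounds on $u \mapsto t(u)$ required by (H1) (bounded away from $0$ on $\mathcal{S}$, bounded above on compact subsets) follow from the same monotone identity together with the growth (1), the vanishing (2), and Rellich-Kondrachov.

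For (H2), evenness is immediate in the odd case. Using $\Phi'_{\cal A}(u)u=0$ on $\mathcal{N}_{\cal A}$ gives $\Phi_{\cal A}(u) = \int_\Omega\left(\tfrac{1}{p} f(u)u - F(u)\right)$, and condition (3) yields the pointwise inequality $\tfrac{1}{p}f(s)s - F(s) \geq 0$ on $\mathbb{R}$, hence $\Phi_{\cal A}\geq 0$ on $\mathcal{N}_{\cal A}$. The \emph{main obstacle} is establishing the Palais-Smale condition on $\mathcal{N}_{\cal A}$: as emphasised in \cite{LM2}, ${\cal E}^p_{p,\Omega}$ fails to be coercive on $W^{1,p}_0(\Omega)$, so boundedness of a PS sequence cannot be read off the energy alone. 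My strategy proceeds in three steps. \emph{Step 1 (boundedness in $W^{1,p}_0$).} For $(u_n) \subset \mathcal{N}_{\cal A}$ with $\Phi_{\cal A}(u_n) \to d$ and $\Phi'_{\cal A}(u_n) \to 0$, I would assume by contradiction $\|u_n\|\to \infty$, set $v_n = u_n/\|u_n\|$, extract a weak limit $v$, and split into the cases $v \neq 0$ (handled by the $p$-superlinearity (4) applied to $\Phi_{\cal A}(u_n)/\|u_n\|^p$ via Fatou, together with the bound ${\cal E}_{p,\Omega}(u) \leq C\|u\|$ from \cite{LM2}) and $v = 0$ (handled via the fibering inequality $\Phi_{\cal A}(u_n) \geq \Phi_{\cal A}(tv_n)$ valid for every $t>0$, the affine Sobolev inequality controlling $\|v_n\|_{p^*}$ by ${\cal E}_{p,\Omega}(v_n)$, and subcritical compactness). \emph{Step 2 (nonzero weak limit).} Once $(u_n)$ is bounded in $W^{1,p}_0(\Omega)$ and $u_n \rightharpoonup u$, an argument analogous to Lemma \ref{l1}(1) combined with (3)-(4) excludes $u=0$. \emph{Step 3 (strong convergence).} Finally, the complete continuity of the superposition $u \mapsto \int_\Omega f(u)\varphi$ provided by (1) via Rellich-Kondrachov, together with the $(S_+)$-type property of $\Delta^{\cal A}_p$ from \cite{LM2}, promotes weak to strong convergence in $W^{1,p}_0(\Omega) \setminus \{0\}$.

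For (H3), let $(u_n)\subset \mathcal{S}$ with $u_n \rightharpoonup 0$ in $W^{1,p}_0(\Omega)$. By Rellich-Kondrachov and (1), $\int_\Omega f(tu_n)u_n \to 0$ and $\int_\Omega F(tu_n)\to 0$ for every fixed $t>0$; combining this with the defining relation $t(u_n)^{p-1}{\cal E}^p_{p,\Omega}(u_n) = \int_\Omega f(t(u_n)u_n)u_n$ and reasoning in the spirit of Lemma A.1 of \cite{RSS} forces $t(u_n)\to \infty$, after which the $p$-superlinearity (4) yields $\Phi_{\cal A}(t(u_n)u_n) \to \infty$. With (H1), (H2) and (H3) in hand, Theorem \ref{tn} delivers all the conclusions of Theorem \ref{tap}.
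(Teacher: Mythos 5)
Your overall architecture (verify (H1)--(H2) and invoke Theorem \ref{tn}) matches the paper, and your treatment of (H1) for a single fiber and of the sign of $u_0$ is fine. But Step 1 of your Palais--Smale argument has a genuine gap, and it sits exactly at the point the affine setting is hard. You normalize by the norm, $v_n=u_n/\|u_n\|$, and in the case $v=0$ you invoke $\Phi_{\cal A}(u_n)\ge\Phi_{\cal A}(tv_n)=\frac{t^p}{p}{\cal E}^p_{p,\Omega}(v_n)-\int_\Omega F(tv_n)$ to reach a contradiction. This fails: since ${\cal E}_{p,\Omega}$ is \emph{not} coercive, ${\cal E}^p_{p,\Omega}(v_n)$ may tend to $0$ along a normalized sequence with $v_n\rightharpoonup 0$, so the right-hand side tends to $0$ rather than to $\frac{t^p}{p}$, and no choice of $t$ produces a contradiction with $\Phi_{\cal A}(u_n)\to d$. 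The affine Sobolev inequality you cite goes the wrong way for this purpose (it bounds $\|v_n\|_{p^*}$ \emph{above} by ${\cal E}_{p,\Omega}(v_n)$, so it cannot prevent the affine energy from vanishing). The paper avoids this by normalizing with the affine energy itself: it first shows that $({\cal E}_{p,\Omega}(u_k))$ is bounded, using $v_k=u_k/{\cal E}_{p,\Omega}(u_k)$, for which ${\cal E}_{p,\Omega}(v_k)=1$ and the fibering inequality does give $\Phi_{\cal A}(tv_k)\ge\frac{t^p}{p}-\int_\Omega F(tv_k)$; the dichotomy $v=0$ versus $v\neq 0$ is then resolved by exactly the two contradictions you describe. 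Only afterwards does it obtain $W^{1,p}_0$-boundedness, via the affine Rellich--Kondrachov theorem (Theorem 6.5.3 in \cite{T}) giving $u_k\to u\neq 0$ in $L^q$, followed by Corollary 2.1 of \cite{LM1}, which converts boundedness of the affine energy plus a nonzero $L^q$-limit into boundedness in $W^{1,p}_0(\Omega)$. Without these two external inputs there is no route from ``affine energy bounded'' to ``norm bounded,'' so your Step 1 cannot be repaired by rearranging the cases.

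A smaller but related omission: the bounds on $u\mapsto t(u)$ over the unit sphere $\mathcal{S}$ required by (H1) do not follow directly from the fiber identity plus Rellich--Kondrachov, again because ${\cal E}_{p,\Omega}$ may degenerate on $\mathcal{S}$. The paper first proves the bounds on the affine sphere $\mathcal{S}^{\cal A}=\{{\cal E}_{p,\Omega}(u)=1\}$ and transfers them to $\mathcal{S}$ using the $(-1)$-homogeneity $t(su)=s^{-1}t(u)$ together with ${\cal E}_{p,\Omega}(u)\le 1$ on $\mathcal{S}$ and the positivity of $\inf_{W_1}{\cal E}_{p,\Omega}$ on compact $W_1\subset\mathcal{S}$. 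Finally, your verification of (H3) is not needed for Theorem \ref{tap} (the statement does not claim $\lambda_n\to\infty$, and the paper does not prove (H3) here), so no harm done, but the same non-coercivity caveat would apply to that argument as well.
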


The proof of Theorem \ref{tap} follows from the next lemmas:

\begin{lemma}\label{la}
Under the assumptions of Theorem \ref{tap} the following assertions hold:
\begin{enumerate}
\item For any $u \in W^{1,p}_0(\Omega) \setminus \{0\}$ the map $t \mapsto \Phi_{\cal A}(tu)$ has exactly one critical point $t(u)$, which is a global maximum point. Moreover, the map $u\mapsto t(u)$ is $(-1)$-homogeneous, bounded from below by a positive constant in $\mathcal{S}^{{\cal A}}$, and bounded from above in any compact set $W \subset \mathcal{S}^{{\cal A}}$. 
\item The {\it affine} $p$-energy ${\cal E}^p_{p,\Omega}$ is bounded from below by a positive constant in $\mathcal{N}_{\cal A}$. In particular $\mathcal{N}_{\cal A}$ is away from zero in $L^q(\Omega)$ for any $1<q<p^*$.
\end{enumerate}
\end{lemma}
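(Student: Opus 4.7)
Since $\mathcal{E}^p_{p,\Omega}$ is $p$-homogeneous, for any $u\in W^{1,p}_0(\Omega)\setminus\{0\}$ set
\[
\varphi_u(t):=\Phi_{\cal A}(tu)=\tfrac{t^p}{p}\mathcal{E}^p_{p,\Omega}(u)-\int_\Omega F(tu),\qquad
\varphi_u'(t)=t^{p-1}\bigl(\mathcal{E}^p_{p,\Omega}(u)-\psi_u(t)\bigr),
\]
where $\psi_u(t):=t^{1-p}\int_\Omega f(tu)u$ for $t>0$. Splitting the integrand along $\{u>0\}$ and $\{u<0\}$ and writing the integrand in both cases in terms of $f(tu)/|tu|^{p-1}$ multiplied by $|u|^p$ (up to a fixed sign), condition (3) yields that $\psi_u$ is strictly increasing on $(0,\infty)$. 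The usual consequence $|f(s)|\le\varepsilon|s|^{p-1}+C_\varepsilon|s|^{r-1}$ of (1)-(2) together with dominated convergence gives $\psi_u(0^+)=0$; moreover, combining (3) with (4) forces $f(s)/|s|^{p-1}\to\infty$ as $|s|\to\infty$ (and implicitly $r>p$ in (1)), so Fatou on $\{u\neq 0\}$ gives $\psi_u(t)\to\infty$ as $t\to\infty$. Hence $\varphi_u'$ has a unique zero $t(u)>0$, which is a global maximum; $(-1)$-homogeneity of $u\mapsto t(u)$ follows at once from $\Phi_{\cal A}(t\lambda u)=\Phi_{\cal A}((t\lambda)u)$.

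The two bounds on $t$ over $\mathcal{S}^{\cal A}$ both flow from the Nehari identity $t(u)^{p-1}=\int_\Omega f(t(u)u)u$. For the lower bound, combining the growth estimate with Zhang's affine Sobolev inequality $\|u\|_{p^*}\leq a_{N,p}\mathcal{E}_{p,\Omega}(u)$, which on $\mathcal{S}^{\cal A}$ reads $\|u\|_q\le C_q$ for every $q\in[1,p^*]$ (using Hölder on the bounded $\Omega$), yields
\[
t(u)^{p-1}\leq \varepsilon C\, t(u)^{p-1}+C_\varepsilon C'\, t(u)^{r-1};
\]
choosing $\varepsilon$ small and exploiting $r>p$ gives $t(u)\geq c_0>0$. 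For the upper bound on a compact $W\subset\mathcal{S}^{\cal A}$, suppose $t(u_n)\to\infty$ for $u_n\in W$; by compactness, up to a subsequence, $u_n\to u$ strongly in $W^{1,p}_0(\Omega)$ and a.e., with $u\not\equiv 0$ (since $\mathcal{E}_{p,\Omega}(u)=1$). On $\{u\neq 0\}$, $|t(u_n)u_n|\to\infty$ a.e., so by (4) and Fatou (with $F\ge0$ following from (2)-(3)) one obtains $t(u_n)^{-p}\int_\Omega F(t(u_n)u_n)\to\infty$, forcing $\Phi_{\cal A}(t(u_n)u_n)\to-\infty$. This contradicts $\Phi_{\cal A}(t(u_n)u_n)\geq\Phi_{\cal A}(u_n)\to\Phi_{\cal A}(u)\in\mathbb{R}$, using the continuity of $\Phi_{\cal A}$ on $W^{1,p}_0(\Omega)$ recalled from \cite{LM2}.

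For part (2), on $\mathcal{N}_{\cal A}$ the Nehari identity reads $\mathcal{E}^p_{p,\Omega}(u)=\int_\Omega f(u)u$. Bounding the right-hand side by $\varepsilon\|u\|_p^p+C_\varepsilon\|u\|_r^r$ and using Hölder together with affine Sobolev to get $\|u\|_p,\|u\|_r\leq C\mathcal{E}_{p,\Omega}(u)$, one finds
\[
\mathcal{E}^p_{p,\Omega}(u)\leq \varepsilon C^p\mathcal{E}^p_{p,\Omega}(u)+C_\varepsilon C^r\mathcal{E}^r_{p,\Omega}(u),
\]
so $\mathcal{E}_{p,\Omega}(u)\geq c_0>0$ after choosing $\varepsilon$ small and using $r>p$. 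The $L^q$ bound follows the same scheme: picking $q_0\in(\max(p,r),p^*)$ one has $|f(s)s|\leq\varepsilon|s|^p+C_\varepsilon|s|^{q_0}$, and combining this with Hölder ($\|u\|_p\le C\|u\|_{q_0}$) and affine Sobolev ($\|u\|_{q_0}\le C'\mathcal{E}_{p,\Omega}(u)$) yields $\|u\|_{q_0}\geq c>0$; for $q\geq q_0$ Hölder propagates this downward, while for $q<q_0$ a contradiction argument based on the interpolation $\|u\|_{q_0}\leq \|u\|_q^\theta\|u\|_{p^*}^{1-\theta}$ — together with the subcritical bound on $\int f(u)u$, which prevents $\|u\|_{p^*}$ from growing along a would-be sequence with $\|u\|_q\to 0$ without violating the Nehari identity — closes the argument.

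The chief technical hurdle I anticipate is the careful handling of the sign of $u$ in verifying the strict monotonicity of $\psi_u$ under (3), and propagating the $L^{q_0}$ bound in part (2) down to small $q$: here one must replace direct Hölder (which fails) by an interpolation with $L^{p^*}$, using the Nehari balance to preclude $\|u\|_{p^*}\to\infty$ along the contradicting sequence.
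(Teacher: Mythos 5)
Your treatment of part (1) and of the first assertion of part (2) is correct and is essentially the paper's argument with the details filled in: the paper likewise reduces everything to the fibering map, the $(-1)$-homogeneity identity $t(su)=s^{-1}t(u)$, the Nehari relation $t(u)^{p-1}=\int_\Omega f(t(u)u)u$ on $\mathcal{S}^{\cal A}$, and the affine Sobolev inequality. Your observation that (3)--(4) force $f(s)/|s|^{p-1}\to\infty$ (hence effectively $r>p$ in (1)) and that $F\ge 0$ are both correct, and your contradiction argument for the upper bound of $t$ on compact subsets (via $\Phi_{\cal A}(t(u_n)u_n)\ge \Phi_{\cal A}(u_n)$ and Fatou) is a legitimate variant of what the paper leaves implicit.

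The genuine gap is in the last assertion of part (2), for exponents $q$ below $r$. Your closing claim --- that the Nehari identity ``prevents $\|u\|_{p^*}$ from growing along a would-be sequence with $\|u\|_q\to 0$'' --- is not justified and cannot be salvaged by the estimates you have in hand. From $\mathcal{E}^p_{p,\Omega}(u)=\int_\Omega f(u)u\le \varepsilon\|u\|_p^p+C_\varepsilon\|u\|_r^r$ and the interpolation $\|u\|_r\le\|u\|_q^{\theta}\|u\|_{p^*}^{1-\theta}$ one gets $\|u\|_{p^*}^p\le o(1)+o(1)\|u\|_{p^*}^{(1-\theta)r}$ along a sequence with $\|u\|_q\to 0$; this yields a contradiction only when $(1-\theta)r<p$, and a computation of $\theta$ shows this fails when $q$ is close to $1$ and $r$ is close to $p^*$. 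Worse, the statement itself is false at this level of generality: for $f(s)=|s|^{r-2}s$ with $r$ close to $p^*$, a concentrating sequence $u_n=A_n n^{\alpha}\phi(nx)$ rescaled onto the Nehari manifold satisfies $\|u_n\|_q^q\sim n^{\,pq/(r-p)-N}\to 0$ for all $q<N(r-p)/p$, and the affine energy obeys exactly the same scaling as the Dirichlet energy, so the same construction works for $\mathcal{N}_{\cal A}$. Hence no argument based only on (1)--(4) and the Nehari identity can give ``away from zero in $L^q$ for \emph{every} $1<q<p^*$.''

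To be fair, the paper's own proof of this sentence is a one-line assertion and suffers from the same over-reach; what is actually provable (and all that is used later, in the Palais--Smale argument, to conclude that the weak $L^q$-limit is nonzero) is that $\mathcal{N}_{\cal A}$ is bounded away from zero in $L^{q}$ for $q\in[r,p^*)$, which your estimate $\tfrac12\mathcal{E}^p_{p,\Omega}(u)\le C_\varepsilon\|u\|_{r}^{r}$ together with $\mathcal{E}_{p,\Omega}(u)\ge c_0$ and H\"older on the bounded domain already delivers. I would recommend stating and proving exactly that, and dropping the interpolation paragraph.
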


\begin{proof}\strut
\begin{enumerate}
\item The first assertion follows from the $p$-homogeinety of ${\cal E}^p_{p,\Omega}(u)$ and the conditions on $f$. Let us show that $t(su)=s^{-1}t(u)$ for any $u \neq 0$ and $s>0$. Indeed, writing $\phi_u(t)=\Phi_{\cal A}(tu)$ we see that $\phi_{su}'(t)=s\phi_u'(st)$, which yields that $st(su)=t(u)$, as desired. Finally, if $u \in \mathcal{S}^{{\cal A}}$ then
$t(u)u \in \mathcal{N}_{\cal A}$ provides us with $$t(u)^{p-1}=\int_\Omega f(t(u)u)u,$$ which combined with the behavior of $f$ at zero and infinity yields the final assertions.\\

\item By the previous item we know that
$$t(u)=\frac{1}{{\cal E}_{p,\Omega}(u)}t\left(\frac{u}{{\cal E}_{p,\Omega}(u)}\right).$$
Thus, for $u \in \mathcal{N}_{\cal A}$ there holds
$1=t(u)=\frac{1}{{\cal E}_{p,\Omega}(u)}t\left(\frac{u}{{\cal E}_{p,\Omega}(u)}\right)$, i.e. ${\cal E}_{p,\Omega}(u)=t\left(\frac{u}{{\cal E}_{p,\Omega}(u)}\right)$ and since $\frac{u}{{\cal E}_{p,\Omega}(u)} \in \mathcal{S}^{{\cal A}}$ the previous item yields the first assertion. The second assertion follows from the previous one and the equality
$${\cal E}^p_{p,\Omega}(u)=\int_\Omega f(u)u$$ for $u \in \mathcal{N}_{\cal A}$.
\end{enumerate}
\end{proof}

\begin{lemma}
Under the assumptions of Theorem \ref{tap} the functional $\Phi_{\cal A}$ satisfies (H1) and (H2).
\end{lemma}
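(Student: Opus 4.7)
The plan is to verify (H1) and (H2) with $X=W^{1,p}_0(\Omega)$. For (H1), Lemma \ref{la}(1) already supplies uniqueness of $t(u)$ and its character as a global maximum, together with two-sided control of $t$ on $\mathcal{S}^{{\cal A}}$. I would transfer these bounds to the Sobolev unit sphere $\mathcal{S}$ via the $(-1)$-homogeneity $t(u)={\cal E}_{p,\Omega}(u)^{-1}\,t(u/{\cal E}_{p,\Omega}(u))$, using the continuity of ${\cal E}_{p,\Omega}$ on $W^{1,p}_0(\Omega)\setminus\{0\}$. Boundedness of $t$ away from zero on $\mathcal{S}$ follows by inserting the growth bound $|f(s)|\le\varepsilon|s|^{p-1}+C_\varepsilon|s|^{r-1}$ coming from (1)--(2) into the Nehari identity $t(u)^p\,{\cal E}^p_{p,\Omega}(u)=\int_\Omega f(t(u)u)(t(u)u)$. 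Boundedness from above on a compact $K\subset\mathcal{S}$ is a Fatou-type argument: if $t(u_n)\to\infty$ along $u_n\to u\in K$ (so $u\ne 0$), dividing the Nehari identity by $t(u_n)^p$ and using that $f(s)s/|s|^p\to\infty$ as $|s|\to\infty$ (a consequence of (3)--(4)) while $f(s)s\ge 0$ (from (2)--(3)), Fatou's lemma forces the right-hand side to diverge, whereas the left-hand side tends to the finite ${\cal E}^p_{p,\Omega}(u)$ by continuity.

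For (H2), evenness of $\Phi_{\cal A}$ is immediate from $f$ being odd and from ${\cal E}_{p,\Omega}(-u)={\cal E}_{p,\Omega}(u)$ (since $\|\nabla_\xi(-u)\|_p=\|\nabla_\xi u\|_p$). The lower bound on $\mathcal{N}_{\cal A}$ comes from writing $\Phi_{\cal A}(u)=\frac{1}{p}\int_\Omega[f(u)u-pF(u)]$ for $u\in\mathcal{N}_{\cal A}$; integrating the pointwise inequality $f(t)\le (f(s)/s^{p-1})\,t^{p-1}$ (valid for $0<t<s$ by the monotonicity in (3), and its analog for $s<0$) from $0$ to $s$ yields $pF(s)\le f(s)s$ on all of $\mathbb{R}$, so $\Phi_{\cal A}\ge 0$ on $\mathcal{N}_{\cal A}$. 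The Palais-Smale property is the main obstacle. Given $(u_n)\subset\mathcal{N}_{\cal A}$ with $\Phi_{\cal A}(u_n)$ bounded and $\Phi_{\cal A}'(u_n)\to 0$, I would first prove boundedness in $W^{1,p}_0(\Omega)$ by contradiction: setting $v_n=u_n/\|u_n\|$ and extracting a weak limit $v$, the case $v\ne 0$ is excluded by combining Fatou with (4) to force $\int_\Omega F(u_n)/\|u_n\|^p\to\infty$ against the boundedness of ${\cal E}^p_{p,\Omega}(v_n)/p$ (controlled by $c\|v_n\|^p=c$ thanks to ${\cal E}_{p,\Omega}(u)\le c\|u\|$). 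The case $v=0$ is more delicate because ${\cal E}^p_{p,\Omega}$ is not coercive and ${\cal E}^p_{p,\Omega}(v_n)$ may vanish; I would invoke the Nehari maximality $\Phi_{\cal A}(u_n)\ge\Phi_{\cal A}(\tau v_n)$ for every $\tau>0$, use that $v_n\to 0$ in $L^r$ ($r<p^*$) by compact embedding so $\int_\Omega F(\tau v_n)\to 0$, and combine the resulting forced vanishing ${\cal E}^p_{p,\Omega}(v_n)\to 0$ with the affine $L^p$-Sobolev inequality $\|u\|_{p^*}\le c\,{\cal E}_{p,\Omega}(u)$ applied to the Nehari identity ${\cal E}^p_{p,\Omega}(u_n)=\int_\Omega f(u_n)u_n$ to contradict $\|u_n\|\to\infty$.

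Once $(u_n)$ is bounded, extract $u_n\rightharpoonup u$ in $W^{1,p}_0(\Omega)$; $u\ne 0$ because Lemma \ref{la}(2) keeps $\mathcal{N}_{\cal A}$ away from zero in $L^r$ for $1<r<p^*$, and the compact embedding passes this bound to the strong $L^r$-limit. Testing $\Phi_{\cal A}'(u_n)\to 0$ against $u_n-u$ and using the subcritical growth of $f$ together with $u_n\to u$ in $L^r$ (to kill $\int_\Omega f(u_n)(u_n-u)$), one reduces to $\langle\Delta^{\cal A}_p u_n,u_n-u\rangle\to 0$, whence strong convergence follows from an $(S_+)$-type property of $\Delta^{\cal A}_p$ as developed in \cite{LM2}. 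This final $(S_+)$-style statement is arguably the hardest point: since ${\cal E}^p_{p,\Omega}$ is not convex, the usual monotonicity proof of $(S_+)$ does not apply and one has to rely on the integral-geometric structure of ${\cal E}_{p,\Omega}$; this, together with the non-coercive case $v=0$ above, constitutes the main technical obstruction to verifying (H2).
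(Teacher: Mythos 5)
Your treatment of (H1) matches the paper's: transfer the bounds on $t$ from $\mathcal{S}^{\cal A}$ to $\mathcal{S}$ via the $(-1)$-homogeneity $t(u)={\cal E}_{p,\Omega}(u)^{-1}t\bigl(u/{\cal E}_{p,\Omega}(u)\bigr)$, using ${\cal E}_{p,\Omega}\le 1$ on $\mathcal{S}$ for the lower bound and ${\cal E}_{p,\Omega}\ge\delta_1$ on compact subsets for the upper bound. The evenness and nonnegativity of $\Phi_{\cal A}$ on $\mathcal{N}_{\cal A}$ are also fine.

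The gap is in your boundedness argument for the Palais--Smale sequence, precisely in the case you flag as delicate. You normalize by the Sobolev norm, set $v_n=u_n/\|u_n\|$, and in the case $v_n\rightharpoonup 0$ you correctly deduce ${\cal E}_{p,\Omega}(v_n)\to 0$ from the Nehari maximality; but the proposed contradiction does not close. The affine Sobolev inequality $\|u\|_{p^*}\le c\,{\cal E}_{p,\Omega}(u)$ together with the Nehari identity ${\cal E}^p_{p,\Omega}(u_n)=\int_\Omega f(u_n)u_n$ only produces relations among ${\cal E}_{p,\Omega}(u_n)$ and $L^q$-norms of $u_n$; since ${\cal E}_{p,\Omega}$ is \emph{not} coercive, none of these quantities controls $\|u_n\|$, so nothing contradicts $\|u_n\|\to\infty$. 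Indeed the dangerous scenario --- $\|u_n\|\to\infty$ while ${\cal E}_{p,\Omega}(u_n)$, $\Phi_{\cal A}(u_n)$ and $\int_\Omega F(u_n)$ all stay bounded --- is untouched by every estimate in your plan. The paper avoids this by normalizing with the affine energy instead: it assumes ${\cal E}_{p,\Omega}(u_k)\to\infty$, sets $v_k=u_k/{\cal E}_{p,\Omega}(u_k)\in\mathcal{S}^{\cal A}$, uses the \emph{affine} Rellich--Kondrachov theorem (Theorem 6.5.3 in \cite{T}) to extract $v_k\to v$ in $L^q$, and runs your two-case dichotomy on $v$ to conclude that $({\cal E}_{p,\Omega}(u_k))$ is bounded. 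It then gets $u_k\to u$ in $L^q$ with $u\neq 0$ (Lemma \ref{la}(2)) and invokes Corollary 2.1 of \cite{LM1}, which states exactly that bounded affine energy \emph{plus} a nonzero strong $L^q$ limit forces boundedness in $W^{1,p}_0(\Omega)$. That structural fact about ${\cal E}_{p,\Omega}$ is the ingredient your argument is missing and cannot be replaced by the elementary manipulations you describe. (Your final step, reducing to $\langle\Delta^{\cal A}_p u_k,u_k-u\rangle\to 0$ and citing the $(S_+)$-type result of \cite{LM2}, is what the paper does as well.)
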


\begin{proof}
Let us first deal with (H1). From the $p$-homogeinety of ${\cal E}^p_{p,\Omega}(u)$ and the conditions on $f$ it is clear that for any $u \in W^{1,p}_0(\Omega) \setminus \{0\}$ the map $t \mapsto \Phi_{\cal A}(tu)$ has exactly one critical point $t(u)$, which is a global maximum point. It remains to show that the map $u\mapsto t(u)$ is bounded from below by a positive constant in $\mathcal{S}$, and bounded from above in any compact set $W_1 \subset \mathcal{S}$. Indeed, by Lemma \ref{la} there exists $\delta > 0$ such that $t(u)\geq \delta$ for all $u\in \mathcal{S}^{{\cal A}}$. Since ${\cal E}_{p,\Omega}(u)\leq 1$ for any  $u \in \mathcal{S}$ we find that 
\begin{equation} \label{etu}
t(u)=\frac{1}{{\cal E}_{p,\Omega}(u)}t\left(\frac{u}{{\cal E}_{p,\Omega}(u)}\right)\geq t\left(\frac{u}{{\cal E}_{p,\Omega}(u)}\right) \geq\delta
\end{equation} for any $u\in \mathcal{S}$. Now, let $W_1 \subset \mathcal{S}$ be a compact set. We set $W_2:=\left\{\frac{u}{{\cal E}_{p,\Omega}(u)}:u\in W_1\right\}$, so that $W_2\subset \mathcal{S}^{{\cal A}}$ is a compact set as well, and by Lemma \ref{la} we know that $u \mapsto t(u)$ is bounded from above in $W_2$. Since $W_1 \subset \mathcal{S}$ is a compact set, there exists $\delta_1>0$ such that ${\cal E}_{p,\Omega}(u)\geq\delta_1$ in $W_{1}$. Thus \eqref{etu} shows that there exists $\delta_2>0$ such that $t(u)\leq\delta_2$ for all $u\in W_{1}$. 

Next we prove (H2). It is clear that $\Phi_{\cal A}$ is even and positive on $\mathcal{N}_{\cal A}$.
Let $(u_k)$ be a sequence in $\mathcal{N}_{\cal A}$ such that $(\Phi_{\cal A}(u_k))$ is bounded and $\Vert \Phi'_{\cal A}(u_k) \Vert_{W^{-1, p'}(\Omega)} \rightarrow 0$. 
We claim that $({\cal E}_{p,\Omega}(u_k))$ is bounded. Otherwise ${\cal E}_{p,\Omega}(u_k)\rightarrow\infty$, and we set $v_k := \frac{u_k}{{\cal E}_{p,\Omega}(u_k)}$. Thus, up to a subsequence, we have $v_k \rightarrow v$ strongly in $L^q(\Omega)$, for $1\leq q<p^*$. Suppose $v = 0$. Since $u_k\in\mathcal{N}_{\cal A}$  we have $u_k =t(v_k)v_k$. Then for each $t > 0$, we obtain
\[
d \geq \Phi_{\cal A}(u_k) = \Phi_{\cal A}(t(v_k)v_k)\geq \Phi_{\cal A}(tv_k)\geq \frac{t^p}{p} - \int_\Omega F(tv_k) \rightarrow \frac{t^p}{p}.
\]
This yields a contradiction upon choosing $t > (dp)^{1/p}$. So $v\neq 0$ and hence
\[
0 \leq\frac{\Phi_{\cal A}(u_k)}{{\cal E}_{p,\Omega}^p(u_k)}\leq\frac{1}{p}-\int_\Omega \frac{ F({\cal E}_{p,\Omega}(u_k)v_k)}{{\cal E}_{p,\Omega}^p(u_k)}\rightarrow -\infty
\]
as $k\rightarrow\infty$ by assumption (4), and we reach a contradiction again. It follows that $({\cal E}_{p,\Omega}(u_k))$ is
bounded, so that by the affine Rellich-Kondrachov compactness theorem (see Theorem 6.5.3 in \cite{T}) we can assume that $u_k \rightarrow u$ in $L^q(\Omega)$, for $1\leq q<p^*$.
By Lemma \ref{la}(2) we know that $u \neq 0$. Hence, by Corollary 2.1 of \cite{LM1}, $(u_k)$ is bounded in $W_0^{1,p}(\Omega)$, so that $u_k \rightharpoonup u \neq 0$ in $W_0^{1,p}(\Omega)$, up to a subsequence. Moreover, $\Vert \Phi'_{\cal A}(u_k) \Vert_{W^{-1, p'}(\Omega)} \rightarrow 0$ clearly implies

\[
\lim_{k \rightarrow \infty}\, \Phi'_{\cal A}(u_k)(u_k - u) = 0,
\]
which by standard arguments yields
\[
\lim_{k \rightarrow \infty}\, \langle \Delta^{\cal A}_p u_k, u_k - u \rangle = 0.
\]
Finally, by Theorem 2 of \cite{LM2}, $u_k \to u$ in $W_0^{1,p}(\Omega)$, and the proof is complete.
\end{proof}

\begin{remark}\label{raf}
Arguing as in Remark 17 of \cite{SW} we can show that $u_0 \geq 0$ or $u_0 \leq 0$. Thus if $f$ is odd then we can choose $u_0 \geq 0$. If, in addition, $\Omega$ has a smooth boundary then $u_0>0$ in $\Omega$, by Prop. 3.2 in \cite{LM1}. 
\end{remark}

\medskip
\section{A degenerate Kirchhoff type problem}
\medskip

Finally we deal with the functional
\[
\Phi(u) = \frac{1}{\theta} \|u\|^{\theta}- \int_\Omega F(u), \quad u\in W^{1,p}_0(\Omega) \setminus \{0\},
\]
which is associated to the problem \eqref{P0}.
We consider two cases with respect to $\theta$, namely, $0<\theta\leq 1$ and $\theta<0$.
\begin{theorem}\label{tap1}\strut
Let $0<\theta\leq 1$ and $f \in C(\mathbb{R})$ satisfy:
\begin{enumerate}
\item $|f(s)| \leq C(1+|s|^{r-1})$ for some $C>0$ and $1<r<p^*$, and every $s \in \mathbb{R}$. 
\item $f(s)=o(|s|^{\theta-1})$ as $s \to 0$.
\item $\frac{f(s)}{|s|^{\theta-1}}$ is strictly increasing in $\mathbb{R} \setminus \{0\}$.
\item $\frac{F(s)}{|s|^\theta} \to \infty$ as $|s| \to \infty$.

\end{enumerate}
Then the functional $\Phi$ has a ground state level,  which is achieved by a ground state solution $u_0$ of \eqref{P}. If, in addition, $f$ is odd, then we can choose $u_0 >0$ and $\Phi$ has infinitely many pairs of critical points, which are solutions of \eqref{P0}. Moreover, $\lambda_n\rightarrow\infty$.
\end{theorem}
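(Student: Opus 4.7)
The plan is to apply Theorem \ref{tn} with $X = W^{1,p}_0(\Omega)$, by verifying (H1), (H2), (H3) for $\Phi$ and then reading off the three assertions of Theorem \ref{tn}. Critical points of $\Phi$ in $W^{1,p}_0(\Omega)\setminus\{0\}$ yield solutions of \eqref{P0} since $\Phi'(u) = \|u\|^{\theta-p}(-\Delta_p u) - f(u)$. The overall strategy mirrors the proof of Theorem \ref{tap}, but the Nehari geometry and Palais--Smale analysis must be redone with the exponent $\theta \leq 1 < p$ in place of $p$.

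For (H1), I would fix $u \neq 0$ and study $\psi_u(t) := \Phi(tu) = \frac{t^\theta}{\theta}\|u\|^\theta - \int_\Omega F(tu)$. Condition (3) makes $t \mapsto f(tu(x))u(x)/t^{\theta-1}$ strictly increasing pointwise in $x$, while (2) and (4) make its integral run monotonically from $0$ to $+\infty$ in $t$, so $\psi_u'(t) = t^{\theta-1}\bigl(\|u\|^\theta - \int_\Omega f(tu)u/t^{\theta-1}\bigr)$ has exactly one zero $t(u) > 0$, a global maximum. The bound $t(u) \geq c_0 > 0$ on $\mathcal{S}$ follows from $|f(s)| \leq \varepsilon|s|^{\theta-1} + C_\varepsilon|s|^{r-1}$ (a consequence of (1) and (2)) combined with Sobolev and the inequality $r > \theta$, inserted in the Nehari identity; the upper bound of $t$ on a compact subset $W \subset \mathcal{S}$ comes from the fact that $t(u_n) \to \infty$ would force the Nehari integral to blow up by monotone convergence, contradicting $\|u_n\|^\theta = 1$. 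Condition (H3) follows from the maximality of $t(\cdot)$: for $u_n \in \mathcal{S}$ with $u_n \rightharpoonup 0$, Rellich gives $u_n \to 0$ in $L^r$, and for any fixed $t > 0$,
\[
\Phi(t(u_n)u_n) \geq \Phi(tu_n) = \frac{t^\theta}{\theta} - \int_\Omega F(tu_n) \to \frac{t^\theta}{\theta},
\]
so letting $t \to \infty$ yields (H3).

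The hard part is (H2). The lower bound on $\Phi|_{\mathcal{N}}$ is immediate: on $\mathcal{N}$, $\Phi(u) = \int_\Omega(\frac{1}{\theta}f(u)u - F(u))$, and (3) implies $\frac{1}{\theta}f(s)s - F(s) \geq 0$ for every $s \in \mathbb{R}$. For the Palais--Smale condition, let $(u_k) \subset \mathcal{N}$ with $\Phi(u_k) \to d$ and $\Phi'(u_k) \to 0$, and I would argue in three steps. First, $(u_k)$ is bounded: otherwise $v_k := u_k/\|u_k\|$ satisfies $t(v_k) = \|u_k\| \to \infty$ by homogeneity of the fibration; extracting $v_k \rightharpoonup v$, the case $v = 0$ is excluded by maximality of $t(v_k)$ combined with $\Phi(tv_k) \to t^\theta/\theta$ (Rellich) for every $t > 0$, and the case $v \neq 0$ by $\Phi(u_k)/\|u_k\|^\theta \to -\infty$ from (4) and Fatou. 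Second, the weak limit $u$ of a subsequence of $(u_k)$ is nonzero: the Nehari estimate $\|u_k\|^\theta \leq \varepsilon C\|u_k\|^\theta + C_\varepsilon\|u_k\|_r^r$ gives $\|u_k\| \geq c_0 > 0$, so $u_k \to 0$ in $L^r$ would force $\|u_k\| \to 0$. Third, strong convergence: $\|u_k\|$ is bounded above and below by positive constants, so $\|u_k\|^{\theta-p}$ is comparable to a positive constant; testing $\Phi'(u_k) \to 0$ against $u_k - u$ and using subcritical compactness of $f(u_k)$ in $L^{r'}$ gives $\int |\nabla u_k|^{p-2}\nabla u_k \cdot \nabla(u_k - u) \to 0$, whence the $(S_+)$ property of $-\Delta_p$ yields strong convergence $u_k \to u$ in $W^{1,p}_0(\Omega)$.

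With (H1), (H2), (H3) in hand, Theorem \ref{tn} produces the ground state $u_0$ at $\lambda_1$, infinitely many pairs $\pm u_n$ of critical points at levels $\lambda_n$ when $\Phi$ is even (i.e. $f$ odd), and $\lambda_n \to \infty$. For positivity when $f$ is odd: $F$ is even and $\||u_0|\| = \|u_0\|$, so $|u_0| \in \mathcal{N}$ with $\Phi(|u_0|) = \Phi(u_0) = \lambda_1$, allowing us to replace $u_0$ by $|u_0| \geq 0$; since (2)--(3) imply $f \geq 0$ on $[0,\infty)$, one has $-\Delta_p u_0 = \|u_0\|^{p-\theta}f(u_0) \geq 0$, and the strong maximum principle for the $p$-Laplacian yields $u_0 > 0$ in $\Omega$.
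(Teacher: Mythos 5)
Your proposal is correct and follows essentially the same route as the paper: verify (H1)--(H3) for $\Phi$ by redoing the fibering and Palais--Smale analysis of the affine case with the exponent $\theta$ in place of $p$ (in particular the blow-up alternative $v=0$ versus $v\neq 0$ for $v_k=u_k/\|u_k\|$, the lower bound of $\|\cdot\|$ on $\mathcal{N}$, and the $(S_+)$ property of $-\Delta_p$), and then apply Theorem \ref{tn}. The only differences are cosmetic: you make explicit the nonnegativity of $\Phi$ on $\mathcal{N}$ via $\frac{1}{\theta}f(s)s-F(s)\geq 0$ and obtain the lower bound $\|u_k\|\geq c_0$ from the $\varepsilon$-splitting of $f$ rather than from the homogeneity identity $\|u\|=t(u/\|u\|)$ used in the paper.
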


The proof of Theorem \ref{tap1} follows from the next lemmas:

\begin{lemma}\label{lf}
Under the assumptions of Theorem \ref{tap1} the following assertions hold:
\begin{enumerate}
\item $\Phi$ satisfies (H1). 
\item The function $\Vert\cdot\Vert$ is bounded from below by a positive constant in $\mathcal{N}$. In particular $\mathcal{N}$ is away from zero in $L^q(\Omega)$ for any $1<q<p^*$.
\end{enumerate}
\end{lemma}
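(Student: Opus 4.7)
The plan is to follow the template of Lemma \ref{la}, adapting it to the non-$p$-homogeneous leading term $\frac{1}{\theta}\|u\|^\theta$ with $0<\theta\le 1$. For part (1), I would fix $u\in X\setminus\{0\}$ and set $g(t):=\Phi(tu)=\frac{t^\theta}{\theta}\|u\|^\theta-\int_\Omega F(tu)$, so that $g'(t)=t^{\theta-1}\|u\|^\theta-\int_\Omega f(tu)u$. Conditions (1)--(2) yield the standard bound $|f(s)s|\le \varepsilon |s|^\theta+C_\varepsilon |s|^r$ for every $\varepsilon>0$, which implies $g'(t)=t^{\theta-1}(\|u\|^\theta+o(1))$ as $t\to 0^+$, so $g'>0$ near zero; condition (4) gives $g(t)\to-\infty$ as $t\to\infty$, hence $g'<0$ eventually. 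For uniqueness I would rewrite
\[
\frac{g'(t)}{t^{\theta-1}}=\|u\|^\theta-\int_\Omega \frac{f(tu)}{|tu|^{\theta-1}}\,|u|^{\theta-1}u\,dx,
\]
and use (3) together with a case analysis on the sign of $u(x)$ to show that the integrand is strictly increasing in $t>0$ for every $x$ with $u(x)\ne 0$; hence $g'(t)/t^{\theta-1}$ is strictly decreasing on $(0,\infty)$, so $g$ admits a unique critical point $t(u)>0$ which is a global maximum.

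Next, I would derive the behavior of $t(\cdot)$ on $\mathcal{S}$. A uniform lower bound $t(u)\ge \delta>0$ for $u\in\mathcal{S}$ follows from the small-$t$ estimate $g'(t)\ge t^{\theta-1}(1-\varepsilon C_1)-C_\varepsilon C_2\, t^{r-1}$, where $C_1,C_2$ are uniform Sobolev constants on $\mathcal{S}$ (using $\int|u|^\theta\le |\Omega|^{1-\theta/p}\|u\|_p^\theta\le C\|u\|^\theta$ via Jensen). For the upper bound on a compact set $W\subset\mathcal{S}$, I would argue by contradiction: if $(u_n)\subset W$ satisfies $t(u_n)\to\infty$, I extract a convergent subsequence $u_n\to u\in W$ in $X$, so $u\ne 0$; Fatou's lemma combined with (4) yields $\int_\Omega F(t(u_n)u_n)/t(u_n)^\theta\to\infty$, whence $\Phi(t(u_n)u_n)/t(u_n)^\theta \to -\infty$. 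This contradicts $\Phi(t(u_n)u_n)\ge \Phi(u_n)$ being bounded below.

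For part (2), the Nehari identity for $u\in\mathcal{N}$ reads $\|u\|^\theta=\int_\Omega f(u)u$. Combining $|f(s)s|\le\varepsilon|s|^\theta+C_\varepsilon|s|^r$ with $\int_\Omega|u|^\theta\le C\|u\|^\theta$ (Jensen plus Sobolev) and $\int_\Omega|u|^r\le C'\|u\|^r$ yields $\|u\|^\theta\le \varepsilon C\|u\|^\theta+C_\varepsilon C'\|u\|^r$. Choosing $\varepsilon$ small gives $\|u\|^{r-\theta}\ge c>0$, i.e.\ $\|u\|\ge\delta$ on $\mathcal{N}$. The same estimate together with this lower bound gives $\|u\|_r^r\ge c'\|u\|^\theta\ge c'\delta^\theta$, so $\|u\|_r$ is bounded below on $\mathcal{N}$, and the $L^q$ statement follows for $q\ge r$ by H\"older and for $1<q<r$ by a standard contradiction argument: a sequence $u_n\in\mathcal{N}$ with $\|u_n\|_q\to 0$ that is bounded in $X$ would, by Rellich--Kondrachov, also satisfy $\|u_n\|_r\to 0$, contradicting the previous bound; if instead $\|u_n\|\to\infty$, interpolation with the embedding $X\hookrightarrow L^{p^*}$ forces the same contradiction through the growth rate of the Nehari identity.

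The main obstacle is the uniqueness of $t(u)$: the strict monotonicity of the integrand $f(tu)u/t^{\theta-1}$ in $t$ must be verified separately on $\{u>0\}$ and $\{u<0\}$, exploiting that on the latter set $t\mapsto tu(x)$ is decreasing so that (3) produces a decreasing factor $f(tu)/|tu|^{\theta-1}$ which, multiplied by the negative value $u(x)$, still yields a strictly increasing integrand. The upper-bound step for $t(u)$ on compact subsets of $\mathcal{S}$ is the other delicate point, since it requires tracking the ratio $\Phi(t(u_n)u_n)/t(u_n)^\theta$ rather than $\Phi(t(u_n)u_n)$ itself in order to exploit (4) via Fatou.
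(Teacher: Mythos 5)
Your treatment of assertion (1) and of the lower bound $\|u\|\ge\delta$ on $\mathcal{N}$ is correct and is essentially the argument the paper intends: the paper simply invokes the analogy with Lemma \ref{la}(1) and, for (2), the identity $\|u\|=t\left(\frac{u}{\|u\|}\right)$ coming from the $(-1)$-homogeneity of $t(\cdot)$, whereas you derive $\|u\|\ge\delta$ directly from the Nehari identity $\|u\|^{\theta}=\int_\Omega f(u)u$ together with $|f(s)s|\le\varepsilon|s|^{\theta}+C_\varepsilon|s|^{r}$; both routes are fine. Your monotonicity argument for the uniqueness of $t(u)$ (including the sign discussion on $\{u<0\}$) and the Fatou argument for the boundedness of $t(\cdot)$ on compact subsets of $\mathcal{S}$ are sound; note that the nonnegativity of $F$, which Fatou requires, follows from hypotheses (2)--(3).

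The one genuine gap is the final sub-claim, namely that $\mathcal{N}$ is away from zero in $L^q$ for $1<q<r$ along sequences with $\|u_n\|\to\infty$. The interpolation $\|u\|_r\le\|u\|_q^{1-s}\|u\|_{p^*}^{s}$ with $\|u\|_{p^*}\lesssim\|u\|$, fed into the Nehari identity, gives $\|u\|^{\theta}\le C\|u\|_q^{(1-s)r}\|u\|^{sr}$, which yields a contradiction only when $sr\le\theta$; since $s=\frac{1/q-1/r}{1/q-1/p^*}$, this fails once $q$ is far below $r$, so "the same contradiction" is not forced. Indeed the full-range statement appears to be false as written: for $f(s)=|s|^{r-2}s$, $p=2$, $\theta=1$, a concentrating sequence $u_n=c_n\phi(n(x-x_0))$ normalized to lie on $\mathcal{N}$ satisfies $\|u_n\|_q\to 0$ for every $q<\frac{2N(r-1)}{N+2}$, a threshold that exceeds $1$ when $r$ is close to $p^*$. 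This is an overstatement in the lemma itself rather than a repairable defect of your argument; what is actually used later (nonvanishing of strong $L^q$-limits of Palais--Smale sequences) only requires $q\in[r,p^*)$, which you do prove correctly via $\|u\|_r^r\ge c\|u\|^{\theta}\ge c\delta^{\theta}$ and H\"older.
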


\begin{proof}\strut
\begin{enumerate}
\item It follows in a similar way as Lemma \ref{la}(1).\\

\item Note that
$$t(u)=\frac{1}{\Vert u\Vert}t\left(\frac{u}{\Vert u\Vert}\right).$$
Then, $\Vert u\Vert=t\left(\frac{u}{\Vert u\Vert}\right)$ for all $u \in \mathcal{N}$. Since $\frac{u}{\Vert u\Vert} \in \mathcal{S}$, we obtain the first assertion by (1). For the second assertion note that for $u \in \mathcal{N}$ we have
$$\Vert u\Vert^\theta=\int_\Omega f(u)u.$$
\end{enumerate}
\end{proof}

\begin{lemma}\label{lff}
$\Phi$ satisfies the Palais-Smale condition on $\mathcal{N}$.
\end{lemma}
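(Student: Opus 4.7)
The plan is to mimic the strategy of the preceding lemma (the affine $p$-Laplacian case), adapted to the fact that now the ``leading'' part of $\Phi$ is $\frac{1}{\theta}\Vert u\Vert^\theta$ with $\theta\leq 1<p$. I would let $(u_k)\subset\mathcal{N}$ with $\Phi(u_k)$ bounded and $\Phi'(u_k)\to 0$ in $W^{-1,p'}(\Omega)$, and proceed in three steps: (i) show that $(u_k)$ is bounded in $W_0^{1,p}(\Omega)$; (ii) extract a weak limit $u\neq 0$ via the compact Sobolev embedding together with Lemma \ref{lf}(2); (iii) upgrade to strong convergence using the $(S_+)$ property of $-\Delta_p$.

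For the boundedness step, suppose by contradiction that $\Vert u_k\Vert\to\infty$ and set $v_k:=u_k/\Vert u_k\Vert$, so that $\Vert v_k\Vert=1$ and, up to a subsequence, $v_k\rightharpoonup v$ in $W_0^{1,p}(\Omega)$, $v_k\to v$ in $L^q(\Omega)$ for every $1\leq q<p^*$ and a.e.\ in $\Omega$. By Lemma \ref{lf}(1) we have $u_k=t(v_k)v_k$ with $t(v_k)=\Vert u_k\Vert$ a global maximizer of $t\mapsto \Phi(tv_k)$. Therefore, for every fixed $t>0$,
\begin{equation*}
\Phi(u_k)\;\geq\;\Phi(tv_k)\;=\;\frac{t^\theta}{\theta}-\int_\Omega F(tv_k).
\end{equation*}
If $v=0$, then $v_k\to 0$ in $L^r(\Omega)$, and assumption (1) combined with $F(s)=o(|s|^\theta)$ near $0$ (a consequence of (2)) gives $\int_\Omega F(tv_k)\to 0$; choosing $t$ large enough contradicts the boundedness of $\Phi(u_k)$. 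Hence $v\neq 0$. To finish the step, I would divide by $\Vert u_k\Vert^\theta$ and write
\begin{equation*}
\frac{\Phi(u_k)}{\Vert u_k\Vert^\theta}\;=\;\frac{1}{\theta}-\int_\Omega \frac{F(u_k)}{|u_k|^\theta}\,|v_k|^\theta\,dx.
\end{equation*}
On $\{v\neq 0\}$ we have $|u_k(x)|\to\infty$, so (4) yields $F(u_k)/|u_k|^\theta\to\infty$ pointwise; conditions (2) and (4) together imply that $F(s)/|s|^\theta$ is bounded from below by some $-M$, so $F(u_k)/|u_k|^\theta\cdot|v_k|^\theta\geq -M|v_k|^\theta$ and $(|v_k|^\theta)$ converges in $L^1(\Omega)$ (since $\theta<p\leq p^*$). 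Fatou's lemma then gives $\int_\Omega F(u_k)/\Vert u_k\Vert^\theta\,dx\to\infty$, whereas the left-hand side above tends to $0$ because $\Phi(u_k)$ is bounded and $\Vert u_k\Vert^\theta\to\infty$. This contradiction yields the boundedness of $(u_k)$.

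With $(u_k)$ bounded, I pass to a subsequence so that $u_k\rightharpoonup u$ in $W_0^{1,p}(\Omega)$ and $u_k\to u$ in $L^q(\Omega)$ for $1\leq q<p^*$. Since Lemma \ref{lf}(2) shows that $\mathcal{N}$ is bounded away from $0$ in $L^r(\Omega)$, we have $u\neq 0$. For the strong convergence, I compute
\begin{equation*}
\Phi'(u_k)(u_k-u)\;=\;\Vert u_k\Vert^{\theta-p}\int_\Omega|\nabla u_k|^{p-2}\nabla u_k\cdot\nabla(u_k-u)-\int_\Omega f(u_k)(u_k-u).
\end{equation*}
The last integral tends to $0$ by the subcritical growth (1) and the strong $L^r$-convergence, while the left-hand side tends to $0$ since $\Phi'(u_k)\to 0$ in $W^{-1,p'}(\Omega)$ and $(u_k-u)$ is bounded in $W_0^{1,p}(\Omega)$. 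Because $\Vert u_k\Vert$ is bounded both above (by boundedness) and below (by Lemma \ref{lf}(2)), the prefactor $\Vert u_k\Vert^{\theta-p}$ lies in a compact subinterval of $(0,\infty)$, so
\begin{equation*}
\lim_{k\to\infty}\int_\Omega|\nabla u_k|^{p-2}\nabla u_k\cdot\nabla(u_k-u)=0.
\end{equation*}
The $(S_+)$ property of $-\Delta_p$ on $W_0^{1,p}(\Omega)$ now yields $u_k\to u$ in $W_0^{1,p}(\Omega)$, and since $u\neq 0$ the convergence takes place in $W_0^{1,p}(\Omega)\setminus\{0\}$, as required.

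The main technical obstacle is the boundedness step: the exponent $\theta\leq 1$ is too small to allow the classical Ambrosetti--Rabinowitz type tricks, so I must rely on the superlinear condition (4) together with Fatou's lemma, and must carefully check that $F(s)/|s|^\theta$ is bounded from below globally in order to obtain a legitimate lower envelope for the application of Fatou.
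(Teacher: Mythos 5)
Your proof is correct and follows essentially the same route as the paper: boundedness via the normalized sequence $v_k=u_k/\Vert u_k\Vert$ with the dichotomy $v=0$ versus $v\neq 0$, then a nonzero weak limit, then the $(S_+)$ property of $-\Delta_p$ after cancelling the prefactor $\Vert u_k\Vert^{\theta-p}$. The only (harmless) deviations are that you justify the divergence of $\int F(u_k)/\Vert u_k\Vert^\theta$ more carefully via Fatou with the lower envelope $-M|v_k|^\theta$, and you get $u\neq 0$ directly from Lemma \ref{lf}(2) instead of re-running the $d_0$-argument as the paper does.
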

\begin{proof}
Let $(u_k)$ be a sequence in $\mathcal{N}$ such that $(\Phi(u_k))$ is bounded and $\Vert \Phi'(u_k) \Vert_{W^{-1, p'}(\Omega)} \rightarrow 0$. We claim that $(u_k)$ is bounded in $W_0^{1,p}(\Omega)$. Suppose by contradiction that $\Vert u_k\Vert\rightarrow\infty$, and we take $v_k := \frac{u_k}{\Vert u_k\Vert}$. Thus, $v_k \rightarrow v$ strongly in $L^q(\Omega)$, for $1\leq q<p^*$ module a subsequence. Assume $v = 0$. Since $u_k\in\mathcal{N}$  we have $u_k =t(v_k)v_k$. Then
\begin{equation}\label{13}
d \geq \Phi(u_k) = \Phi(t(v_k)v_k)\geq \Phi(tv_k)= \frac{t^\theta}{\theta} - \int_\Omega F(t v_k) \rightarrow \frac{t^\theta}{\theta}
\end{equation}
for all $t > 0$. Choosing $t > (d\theta)^{1/\theta}$, we obtain a contradiction. So $v\neq 0$ and hence
\[
0 \leq\frac{\Phi(u_k)}{\Vert u_k\Vert^{\theta}}=\frac{1}{\theta}-\int_\Omega \frac{F(\Vert u_k\Vert v_k)}{\Vert u_k\Vert^{\theta}}\rightarrow -\infty
\]   
as $k\rightarrow\infty$ by assumption (4), and we derive a contradiction again. It follows that $(u_k)$ is
bounded, so that $u_k \rightharpoonup u$ in $W_0^{1,p}(\Omega)$, module a subsequence. If $u = 0$, we see as in (\ref{13}) that
\[
d \geq \Phi(u_k)\geq \Phi(tu_k)\geq  \frac{d_0t^\theta}{\theta} - \int_\Omega F(t u_k) \rightarrow \frac{d_0t^\theta}{\theta}
 \]
for all $t > 0$, where $d_0 = \inf_{\mathcal{N}} \Vert u\Vert^\theta > 0$, a contradiction. Hence $u\neq 0$. Since $\Vert \Phi'(u_k) \Vert_{W^{-1, p'}(\Omega)} \rightarrow 0$ implies

\[
\lim_{k \rightarrow \infty}\, \Phi'(u_k)(u_k - u) = 0,
\]
we get
\[
\lim_{k \rightarrow \infty}\, \Vert u_k\Vert^{\theta-p}\langle \Delta_p u_k, u_k - u \rangle = 0,
\]
so that
\[
\lim_{k \rightarrow \infty}\, \langle \Delta_p u_k, u_k - u \rangle = 0.
\]
By the $(S_+)$ property of the $p$-Laplacian we find that $u_k \to u$ in $W_0^{1,p}(\Omega)$, which finishes the proof.
\end{proof}

\begin{lemma}
$\Phi$ satisfies (H3).
\end{lemma}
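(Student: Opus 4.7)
The plan is to mimic the argument that appears in the boundedness step of Lemma \ref{lff}, exploiting the fact that $t(u)$ is a \emph{global maximum} of $t \mapsto \Phi(tu)$ (guaranteed by Lemma \ref{lf}(1)) together with the compactness of the Sobolev embedding. Let $(u_n)\subset \mathcal{S}$, i.e.\ $\|u_n\|=1$, satisfy $u_n \rightharpoonup 0$ in $W_0^{1,p}(\Omega)$. Since $u_n\in\mathcal{S}$ and $t(u_n)>0$ is the point where $t\mapsto \Phi(tu_n)$ attains its global maximum, we have, for every fixed $t>0$,
\[
\Phi(t(u_n)u_n) \;\geq\; \Phi(tu_n) \;=\; \frac{t^{\theta}}{\theta} - \int_\Omega F(tu_n).
\]

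Next I would use the Rellich--Kondrachov compactness theorem: the weak convergence $u_n \rightharpoonup 0$ in $W_0^{1,p}(\Omega)$ yields strong convergence $u_n \to 0$ in $L^q(\Omega)$ for every $1\le q<p^*$. From the growth hypothesis (1) we obtain $|F(s)|\le C'(|s|+|s|^r)$ with $1<r<p^*$, hence by dominated convergence
\[
\int_\Omega F(tu_n)\ \longrightarrow\ 0 \qquad \text{as } n\to\infty,
\]
for each fixed $t>0$. Passing to the $\liminf$ in the inequality above therefore gives
\[
\liminf_{n\to\infty} \Phi(t(u_n)u_n) \;\geq\; \frac{t^{\theta}}{\theta} \qquad \text{for every } t>0.
\]

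Since $0<\theta\le 1$ we have $t^{\theta}/\theta\to\infty$ as $t\to\infty$, and the inequality holds for arbitrarily large $t$, so $\Phi(t(u_n)u_n)\to \infty$, which is precisely (H3). There is no serious obstacle here: the only point that deserves care is verifying that $\int_\Omega F(tu_n)\to 0$ along the weakly null sequence, which is immediate from the subcritical growth of $F$ combined with compactness of the embedding $W_0^{1,p}(\Omega)\hookrightarrow L^r(\Omega)$. Note that the argument uses neither assumption (2) nor (3), only the growth bound (1) and $\theta>0$; assumption (4) is implicitly replaced by the strictly simpler fact that for any fixed $t>0$ the integral $\int_\Omega F(tu_n)$ vanishes.
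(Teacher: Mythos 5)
Your proof is correct and is essentially identical to the paper's own argument: both use that $t(u_n)$ is a global maximum point to write $\Phi(t(u_n)u_n)\geq \Phi(tu_n)=\frac{t^{\theta}}{\theta}-\int_\Omega F(tu_n)$, invoke compactness of the embedding to kill the integral term, and let $t\to\infty$ using $\theta>0$. The only difference is that you spell out the growth estimate $|F(s)|\le C'(|s|+|s|^r)$ justifying $\int_\Omega F(tu_n)\to 0$, which the paper leaves implicit.
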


\begin{proof}
Let $(u_k) \subset \mathcal{S}$ be a sequence such that $u_k\rightharpoonup 0$ in $W_0^{1,p}(\Omega)$. Then, we have $u_k \rightarrow 0$ strongly in $L^q(\Omega)$, for $1\leq q<p^*$. Thus
\[
 \Phi(t(u_k)u_k)\geq \Phi(tu_k)=  \frac{t^\theta}{\theta} - \int_\Omega F(t u_k) \rightarrow \frac{t^\theta}{\theta},
 \]
 for all $t>0$. Therefore, $\Phi(t(u_k)u_k)\rightarrow\infty$ as $k\rightarrow\infty$.
\end{proof}

Let us deal now with the case $\theta<0$:
\begin{theorem}\label{tap2}
	Let $\theta<0$ and $f \in C(\mathbb{R})$ satisfy the assumptions of Theorem \ref{tap1}. 
Then the functional $\Phi$ has a ground state level,  which is achieved by a ground state solution $u_0$ of \eqref{P}. If, in addition, $f$ is odd, then we can choose $u_0 >0$ and $\Phi$ has infinitely many pairs of critical points, which are solutions of \eqref{P0}.
\end{theorem}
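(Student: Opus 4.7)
The plan is to apply Theorem~\ref{tn} by verifying hypotheses (H1) and (H2) for $\Phi$, paralleling the proof of Theorem~\ref{tap1}. Since Theorem~\ref{tap2} does not assert $\lambda_n\to\infty$, (H3) is not needed; the only genuine differences from the $0<\theta\le 1$ case are the sign-dependent geometry of $t\mapsto\Phi(tu)$ and the Palais--Smale argument.

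\emph{Verification of (H1).} For $u\in W_0^{1,p}(\Omega)\setminus\{0\}$, set $g(t):=\Phi(tu)=\frac{t^\theta}{\theta}\|u\|^\theta-\int_\Omega F(tu)$. Since $\theta<0$, one has $t^\theta/\theta\to-\infty$ as $t\to 0^+$ (so $g(t)\to-\infty$ there) and $t^\theta/\theta\to 0^-$ as $t\to\infty$, while (4) forces $\int_\Omega F(tu)\to+\infty$, so $g(t)\to-\infty$ also as $t\to\infty$. The equation $g'(t)=0$ rewrites, after dividing by $t^{\theta-1}$, as $\|u\|^\theta=\int_\Omega h(tu)|u|^{\theta-1}u$ with $h(s):=f(s)/|s|^{\theta-1}$; by (3) the right-hand side is strictly increasing in $t>0$, by (2) it tends to $0$ as $t\to 0^+$, and by (4) it eventually surpasses $\|u\|^\theta$. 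This yields a unique global maximizer $t(u)>0$, and the sphere bounds on $t(u)$ follow as in Lemma~\ref{lf}(1).

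\emph{Verification of (H2).} Evenness of $\Phi$ is immediate when $f$ is odd. The identity
\begin{equation*}
\frac{d}{ds}\left(\frac{F(s)}{|s|^\theta}\right)=\frac{f(s)s-\theta F(s)}{|s|^{\theta+1}}
\end{equation*}
combined with the monotonicity of $F(s)/|s|^\theta$ coming from (3) gives $f(s)s-\theta F(s)>0$ for $s\neq 0$, and hence, for $u\in\mathcal{N}$,
\begin{equation*}
\Phi(u)=\frac{1}{\theta}\int_\Omega\bigl(f(u)u-\theta F(u)\bigr)<0.
\end{equation*}
Combined with a uniform lower bound on $\|u\|$ over $\mathcal{N}$ (obtained as in Lemma~\ref{lf}(2) via the Nehari identity $\|u\|^\theta=\int_\Omega f(u)u$ and the subcritical growth (1)), this shows that $\Phi$ is bounded below on $\mathcal{N}$. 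For the Palais--Smale condition, I adapt Lemma~\ref{lff}: given $(u_k)\subset\mathcal{N}$ with $\Phi(u_k)\to d$ and $\Phi'(u_k)\to 0$, set $v_k:=u_k/\|u_k\|\rightharpoonup v$ and argue by contradiction that $\|u_k\|\to\infty$. If $v=0$, then $v_k\to 0$ in $L^q$ for $q<p^*$ and the maximization property yields $\liminf_k\Phi(u_k)\ge\sup_{t>0}t^\theta/\theta=0$, which, together with $\Phi<0$ on $\mathcal{N}$, excludes PS levels $d<0$. If $v\neq 0$ then $|u_k|\to\infty$ on $\{v\neq 0\}$, and combining (4) via Fatou with the expansion $\Phi(u_k)=\|u_k\|^\theta/\theta-\int_\Omega F(u_k)$ (noting $\|u_k\|^\theta\to 0$) forces $\int_\Omega F(u_k)\to\infty$, contradicting boundedness of $\Phi(u_k)$. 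Once $(u_k)$ is bounded, Rellich--Kondrachov compactness and the $(S_+)$ property of $-\Delta_p$ complete the proof of strong convergence, as in Lemma~\ref{lff}.

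Applying Theorem~\ref{tn}(1)-(2) then produces the ground state $u_0$ of $\Phi$ (positive by Remark~\ref{raf} when $f$ is odd) and, in the odd case, infinitely many pairs of critical points of $\Phi$, which are solutions of \eqref{P0}. The main obstacle is the Palais--Smale analysis in the case $v\neq 0$: because $\theta<0$ gives $\|u_k\|^\theta\to 0$ rather than $\to\infty$, the ratio $\Phi(u_k)/\|u_k\|^\theta$ stays negative and cannot be contradicted directly as in Lemma~\ref{lff}; instead one must leverage the divergence of $F(s)/|s|^\theta$ from (4) against the vanishing prefactor $\|u_k\|^\theta$ through a Fatou argument on the set $\{v\neq 0\}$.
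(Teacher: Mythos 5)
Your overall strategy (verify (H1)--(H2) for $\Phi$ and invoke Theorem \ref{tn}) is the paper's, and your analysis of the fibering map for (H1) and the observation that $\Phi<0$ on $\mathcal{N}$ are essentially correct. However, there are two genuine gaps, both traceable to a misreading of hypothesis (4) when $\theta<0$. First, boundedness of $\Phi$ from below on $\mathcal{N}$ does \emph{not} follow from ``$\Phi<0$ on $\mathcal{N}$ together with $\inf_{\mathcal{N}}\Vert u\Vert>0$'': the term $\frac{1}{\theta}\Vert u\Vert^{\theta}$ is indeed bounded on $\{\Vert u\Vert\ge\delta\}$ when $\theta<0$, but $-\int_\Omega F(u)$ could a priori tend to $-\infty$ along $\mathcal{N}$, so a separate argument is required. (The same issue makes your claim in (H1) that ``(4) forces $\int_\Omega F(tu)\to+\infty$'' unjustified, though that one is harmless.)

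Second, and more seriously, your Palais--Smale argument in the case $v\neq0$ fails. For $\theta<0$ one has $|s|^{\theta}\to0$ as $|s|\to\infty$, so hypothesis (4), namely $F(s)/|s|^{\theta}\to\infty$, is compatible with $F$ being \emph{bounded}: it holds automatically as soon as $\liminf_{|s|\to\infty}F(s)>0$, and the paper's Example 1 with $r<-1$ produces exactly such an $f$ vanishing at infinity with $F$ bounded. Hence ``combining (4) via Fatou \dots forces $\int_\Omega F(u_k)\to\infty$'' is false, and no contradiction is reached: $\Phi(u_k)=\Vert u_k\Vert^{\theta}/\theta-\int_\Omega F(u_k)$ can perfectly well converge to a negative level while $\Vert u_k\Vert\to\infty$ and $v\neq 0$. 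The paper's mechanism is different and uses $f$ rather than $F$: dividing the Nehari identity $\Vert u_k\Vert^{\theta}=\int_\Omega f(u_k)u_k$ by $\Vert u_k\Vert^{\theta}$ gives
\[
1=\int_\Omega \frac{f(u_k)}{|u_k|^{\theta-1}}\,|v_k|^{\theta-1}v_k,
\]
whose nonnegative integrand blows up on $\{v\neq0\}$ (by the monotonicity and growth of $f(s)/|s|^{\theta-1}$ from hypothesis (3), together with the negative exponent in $|v_k|^{\theta-1}v_k$), yielding $1=\infty$ by Fatou. This Nehari-identity blow-up is the ingredient you are missing, and it is needed twice: once to prove the lower bound for $\Phi$ on $\mathcal{N}$ and once to prove that Palais--Smale sequences at negative levels are bounded.
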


The next lemma can be proved exactly as Lemma \ref{lf}:
\begin{lemma}
	Under the assumptions of Theorem \ref{tap2} the following assertions hold:
	\begin{enumerate}
		\item $\Phi$ satisfies (H1). 
		\item The function $\Vert\cdot\Vert$ is bounded from below by a positive constant in $\mathcal{N}$. 
	\end{enumerate}
\end{lemma}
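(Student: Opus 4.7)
The plan is to adapt the proof of Lemma~\ref{lf} (which parallels Lemma~\ref{la}(1)) to the case $\theta<0$, the main adjustments concerning the sign of the homogeneous term. Set $\phi_u(t):=\Phi(tu)$ for $u\in W_0^{1,p}(\Omega)\setminus\{0\}$ and $t>0$, so that
\[
\phi_u'(t)=t^{\theta-1}\|u\|^\theta-\int_\Omega f(tu)\,u.
\]
To verify (H1), I would first show that $\phi_u'(t)=0$ has a unique positive solution $t(u)$. Writing $f(s)=g(s)|s|^{\theta-1}$ with $g$ strictly increasing on $\mathbb{R}\setminus\{0\}$ by (3), a pointwise computation gives
\[
\frac{f(tu(x))u(x)}{t^{\theta-1}}=g(tu(x))\,|u(x)|^{\theta-1}u(x),
\]
and checking the regions $\{u>0\}$ and $\{u<0\}$ separately (where the monotonicity of $g(tu)$ in $t$ is compensated by the sign of $|u|^{\theta-1}u$) shows this expression is positive and strictly increasing in $t$. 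Condition (2) forces the integrand to vanish pointwise as $t\to 0^+$, while (3)--(4) force it to blow up on $\{u\ne 0\}$ as $t\to\infty$; dominated/monotone convergence then yields a continuous strictly increasing function of $t$ sweeping through $(0,\infty)$, so $\phi_u'$ has a unique zero $t(u)$, which is a global maximum of $\phi_u$ since $\phi_u'$ changes sign from $+$ to $-$ at $t(u)$.

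For the remaining assertions of (H1), the identity $\phi_{su}'(t)=s\,\phi_u'(st)$ yields $t(su)=s^{-1}t(u)$ as in Lemma~\ref{la}(1). To bound $t(u)$ from below on $\mathcal{S}$, I use that $t(u)u\in\mathcal{N}$ combined with $\|u\|=1$ gives $t(u)^{\theta-1}=\int_\Omega f(t(u)u)u$; the growth bound (1) and Sobolev embedding then produce
\[
t(u)^{\theta-1}\le C\bigl(1+t(u)^{r-1}\bigr),
\]
and since $\theta-1<0<r-1$, this forces $t(u)\ge\delta>0$ uniformly on $\mathcal{S}$. The continuity of $u\mapsto t(u)$, which follows from the strict monotonicity and uniqueness established above together with continuous dependence of $\phi_u'$ on $u$, then bounds $t(u)$ from above on any compact subset of $\mathcal{S}$.

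For part (2), any $u\in\mathcal{N}$ satisfies $\|u\|^\theta=\int_\Omega f(u)u$, and (1) combined with Sobolev yields $\|u\|^\theta\le C(\|u\|+\|u\|^r)$. If $\|u\|\le 1$, this forces $\|u\|^{\theta-1}\le 2C$, and since $\theta-1<0$ we obtain $\|u\|\ge c>0$ for some constant $c$. The main obstacle I anticipate is the sign-tracking in the monotonicity argument of the first step when $u$ changes sign; this is resolved by observing that the oppositely monotone factor $g(tu)$ on $\{u<0\}$ is paired with the sign-flipped factor $|u|^{\theta-1}u$, yielding a positive increasing integrand on both regions, and the growth bound (1) guarantees that all integrals involved are finite for every $t>0$ (so one never needs to worry about $|u|^\theta$ being non-integrable). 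Apart from this bookkeeping, all steps mirror those of Lemma~\ref{lf}.
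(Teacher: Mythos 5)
Your argument is essentially the paper's own proof written out in full: the paper disposes of this lemma in one line (``can be proved exactly as Lemma \ref{lf}''), which in turn defers to Lemma \ref{la}(1), and your factorization $f(s)=g(s)|s|^{\theta-1}$ with $g(s)=f(s)/|s|^{\theta-1}$ strictly increasing, the sign bookkeeping on $\{u>0\}$ and $\{u<0\}$, the homogeneity identity $t(su)=s^{-1}t(u)$, and the Nehari identity $\|u\|^{\theta}=\int_\Omega f(u)u$ combined with (1) and Sobolev are exactly the ingredients the authors intend. Your direct derivation of the lower bound in part (2) from $\|u\|^{\theta}\le C(\|u\|+\|u\|^{r})$ is if anything cleaner than the paper's route through $\|u\|=t\bigl(u/\|u\|\bigr)$, and your observation that assumption (1) keeps every integral finite (so the singular weight $|u|^{\theta}$ never appears alone) is a worthwhile point the paper does not make.

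One deduction deserves a caveat. You assert that (3)--(4) force the integrand $g(tu)|u|^{\theta-1}u$ to blow up pointwise as $t\to\infty$, i.e.\ that $f(s)/|s|^{\theta-1}\to\infty$ as $|s|\to\infty$. For $0<\theta\le 1$ this really does follow from (4): if $g\le L$ then $F(s)\le C s^{\theta}$ for large $s$, contradicting $F(s)/|s|^{\theta}\to\infty$. But for $\theta<0$ the quantity $|s|^{\theta}$ tends to $0$, so $F(s)/|s|^{\theta}\to\infty$ holds as soon as $\liminf F(s)>0$, which is automatic once (2)--(3) give $f>0$ on $(0,\infty)$; hence (4) carries essentially no information here and cannot by itself yield the unboundedness of $g$. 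The unboundedness is genuinely what the existence of the critical point $t(u)$ requires (otherwise $\int g(tu)|u|^{\theta-1}u$ could saturate below $\|u\|^{\theta}$ for some $u$), and it does hold in both of the paper's examples, but it should be recorded as the operative hypothesis rather than derived from (4) when $\theta<0$. The paper's one-line proof is silent on this point as well, so this is a gap you have inherited and made visible rather than one you introduced; everything else in your write-up is sound.
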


\begin{lemma}
	$\Phi$ satisfies (H2).
\end{lemma}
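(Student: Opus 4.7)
My plan is to verify the three ingredients of (H2) --- evenness, boundedness from below on $\mathcal{N}$, and the Palais--Smale condition --- one by one, adapting the strategy of Lemma \ref{lff} to the sign reversal caused by $\theta<0$. Evenness is automatic once $f$ is odd, since then $F$ is even and $\|\cdot\|^\theta$ is always even, so $\Phi$ is even.

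For boundedness from below on $\mathcal{N}$, I would start from the estimate $\|u\|\geq c_0>0$ on $\mathcal{N}$ furnished by the preceding lemma. Since $\theta<0$, this gives $\|u\|^\theta\leq c_0^\theta$, hence $\frac{1}{\theta}\|u\|^\theta \geq \frac{c_0^\theta}{\theta}$, so it suffices to bound $\int_\Omega F(u)$ from above. I would combine the Nehari identity $\|u\|^\theta=\int_\Omega f(u)u$ (which bounds $\int_\Omega f(u)u$ by $c_0^\theta$) with the monotonicity in (3): writing $g(s)=f(s)/|s|^{\theta-1}$, an integration by parts in the representation $F(s)=\int_0^{s}g(t)t^{\theta-1}\,dt$ (the boundary term at $0$ vanishes by continuity of $f$) yields a pointwise comparison between $F(s)$ and $sf(s)$, and integrating the resulting inequality provides the required upper bound on $\int_\Omega F(u)$.

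For the Palais--Smale condition, take $(u_k)\subset\mathcal{N}$ with $\Phi(u_k)\to d$ and $\|\Phi'(u_k)\|_{W^{-1,p'}(\Omega)}\to 0$. The crucial step is the boundedness of $(u_k)$ in $W_0^{1,p}(\Omega)$: assume $\|u_k\|\to\infty$, set $v_k=u_k/\|u_k\|$, and pass to a subsequence with $v_k\rightharpoonup v$. If $v\neq 0$, then $|u_k(x)|\to\infty$ on a set of positive measure, and assumption (4) together with Fatou's lemma forces $\int_\Omega F(u_k)/\|u_k\|^\theta\to+\infty$, which contradicts the boundedness of $\Phi(u_k)/\|u_k\|^\theta$ coming from the step above. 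If $v=0$, then $v_k\to 0$ strongly in $L^q$ for $1\leq q<p^*$, and combining the growth (1) with $\|u_k\|^\theta=\int_\Omega f(u_k)u_k\to 0$ and $\Phi(u_k)\to d$ produces a second contradiction. Once $(u_k)$ is bounded, extract $u_k\rightharpoonup u$ in $W_0^{1,p}(\Omega)$; the lower bound $\|u\|\geq c_0$ on $\mathcal{N}$ together with complete continuity of the Nemytskii map attached to $f$ gives $u\neq 0$. Then $\|u_k\|^{\theta-p}$ stays both bounded and bounded away from $0$, so $\Phi'(u_k)(u_k-u)\to 0$ reduces to $\langle\Delta_p u_k,u_k-u\rangle\to 0$, and the $(S_+)$ property of the $p$-Laplacian closes the argument exactly as in Lemma \ref{lff}.

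The main obstacle is the PS boundedness step. In Lemma \ref{lff}, an unbounded sequence was ruled out via the estimate $\Phi(u_k)\geq t^\theta/\theta\to+\infty$, which exploited $\frac{t^\theta}{\theta}>0$ for $0<\theta\leq 1$. When $\theta<0$ that same quantity is negative and tends to $0$ as $t\to\infty$, so the contradiction has to be driven by assumption (4) instead, which here takes over the role of $p$-superlinearity; arranging the analysis in the two sub-cases $v=0$ and $v\neq 0$ so that the possibly vanishing candidate level $d$ does not spoil the contradiction is the delicate point of the whole argument.
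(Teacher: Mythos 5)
Your outline contains two genuine gaps, both caused by exactly the sign reversal you flag as the delicate point. First, the boundedness from below. Your pointwise comparison between $F(s)$ and $sf(s)$ does not exist for $\theta<0$: integrating by parts in $F(s)=\int_0^s g(t)t^{\theta-1}\,dt$ with $g(t)=f(t)/|t|^{\theta-1}$ increasing gives $F(s)=\frac{1}{\theta}f(s)s-\frac{1}{\theta}\int_0^s g'(t)t^{\theta}\,dt\geq \frac{1}{\theta}f(s)s$, i.e. only a (useless, negative) lower bound; and no inequality $F(s)\leq Cf(s)s$ with $C>0$ can hold in general, since the hypotheses allow $f(s)s\to 0$ while $F(s)\to F_\infty>0$ (take $f(s)=s^{\theta-1}\ln(1+s)$ for large $s$). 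The paper instead argues by contradiction on a sequence $(u_k)\subset\mathcal{N}$ with $\Phi(u_k)\to-\infty$, which forces $\|u_k\|\to\infty$, and splits on the weak limit $v$ of $v_k=u_k/\|u_k\|$: if $v=0$ the fibering inequality $\Phi(u_k)\geq\Phi(tv_k)\to \frac{t^\theta}{\theta}$ gives $\liminf\Phi(u_k)\geq 0$; if $v\neq 0$ the Nehari identity $1=\int_\Omega f(u_k)u_k/\|u_k\|^{\theta}$ blows up. Second, in the Palais--Smale boundedness step your case $v\neq 0$ invokes "the boundedness of $\Phi(u_k)/\|u_k\|^\theta$", but there is no such boundedness: here $\|u_k\|^\theta\to 0^+$, so $\Phi(u_k)\to d<0$ forces $\Phi(u_k)/\|u_k\|^\theta\to-\infty$, which is perfectly consistent with $\int_\Omega F(u_k)/\|u_k\|^\theta\to+\infty$. (This is precisely where the argument of Lemma \ref{lff} cannot be transplanted: there $\|u_k\|^\theta\to+\infty$ and $\Phi\geq 0$ on $\mathcal{N}$.) Again the paper replaces this by the Nehari identity $1=\int_\Omega f(\|u_k\|v_k)v_k/\|u_k\|^{\theta-1}\to\infty$.

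Your case $v=0$ has the complementary problem: the only contradiction available is $\liminf\Phi(u_k)\geq\sup_{t>0}\frac{t^\theta}{\theta}=0$, which is no contradiction when $d\geq 0$, and you never supply the missing idea. The paper's resolution is structural rather than analytic: since $\frac{1}{\theta}\|u\|^\theta<0$ and $F\geq 0$, the functional $\Phi$ is negative everywhere on $W_0^{1,p}(\Omega)\setminus\{0\}$, hence every min--max level $\lambda_k$ is negative and the Palais--Smale condition need only be verified at levels $d<0$ (in the spirit of condition (H2')). You should state this restriction explicitly; without it the claim as you argue it is not established. One small point in your favor: your derivation of $u\neq 0$ for the weak limit of a bounded Palais--Smale sequence, via $\int_\Omega f(u_k)u_k=\|u_k\|^\theta\geq M^\theta>0$ against the compactness of the Nemytskii map, is correct and slightly cleaner than the paper's corresponding step.
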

\begin{proof} First we prove that $\Phi$ is bounded from below on $\mathcal{N}$. Indeed, assume by contradiction that $\Phi(u_k) \to -\infty$ with $(u_k) \subset \mathcal{N}$.
Since $\mathcal{N}$ is away from zero, we must have $\|u_k\| \to \infty$. Set $v_k:=\frac{u_k}{\|u_k\|}$ and assume that $v_k \rightharpoonup v$. Since
$$\Phi(u_k) \geq \Phi(tv_k)= \frac{t^\theta}{\theta} - \int_\Omega F(t v_k)  \quad \forall t>0,$$
we see that $v \neq 0$, otherwise $\Phi(u_k) \geq \frac{t^\theta}{\theta}$ for any $t>0$, which contradicts $\Phi(u_k) \to -\infty$.
On the other hand, from $$\|u_k\|^{\theta}=\int_\Omega f(u_k)u_k$$ we find that $$1=\int_\Omega \frac{f(\|u_k\|v_k)}{\|u_k\|^{\theta-1}|v_k|^{\theta-1}}|v_k|^{\theta-1}v_k \to \infty,$$
which yields another contradiction. Thus $\Phi$ is bounded from below on $\mathcal{N}$.


	
	Let us show that $\Phi$ satisfies the Palais-Smale condition on $\mathcal{N}$. First note that $\Phi$ takes negative values, so that $\lambda_k<0$ for every $k$. Hence we shall prove that the Palais-Smale condition holds at negative levels.	
	 Let $(u_k)$ be a sequence in $\mathcal{N}$ such that $\Phi(u_k) \to d<0$ and $\Vert \Phi'(u_k) \Vert_{W^{-1, p'}(\Omega)} \rightarrow 0$. Arguing as above it is clear that $(u_k)$ is bounded in $W_0^{1,p}(\Omega)$. The proof can now be concluded as in Lemma \ref{lff}.
\end{proof}

   \begin{figure}[h]
	\centering
	\begin{tikzpicture}[scale=0.8]
		\draw[->] (0,0) -- (4,0) node[below] {\scalebox{0.8}{$s$}};
		
		\draw[->] (0,0) -- (0,3) node[right] {\scalebox{0.8}{$f(s)$}};
		
		\draw[blue,thick,domain=0:1,smooth,variable=\x] plot ({\x},{(\x)^(0.5)});	
		
		\draw[blue,thick,domain=1:4,smooth,variable=\x] plot ({\x},{(\x)^(-1)});	
		
	\end{tikzpicture}
	\caption{Plot of some $f$ satisfying the conditions of Theorem \ref{tap2}.} 
\end{figure}
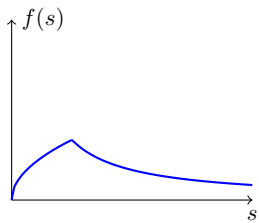

Note that Remark \ref{raf} also applies here, with no need of any smoothness condition on the boundary of $\Omega$.

\begin{example}
Some examples of $f$ satisfying the assumptions of Theorems \ref{tap1} and \ref{tap2} are given by the odd extensions of the following functions:
\begin{enumerate}
	\item $f(s)=s^{\beta}$ for $0 \leq s \leq 1$ and $f(s)=s^{r}$ for $s\geq 1$, where $\beta >0$ and  $\theta-1<r<p^*-1$.
	\item $f(s)=s^{\beta}$ for $0 \leq s \leq 1$ and $f(s)=s^{\theta-1} \frac{\ln(s+1)}{\ln 2}$ for $s\geq 1$, where $\beta >0$.
\end{enumerate}
Note that when $\theta <1$ and $r<0$ these functions vanish at infinity.  
\end{example}

\medskip

{\bf Acknowledgments}
E.J.F. Leite has been supported by CNPq-Grant 316526/2021-5. H. Ramos Quoirin has been supported by FAPEG Programa Pesquisador Visitante Estrangeiro 2024. K. Silva has been supported by  CNPq-Grant 308501/2021-7.  This work was completed during a visit of the second author at the Instituto de Matem\'atica e Estat\'istica, Universidade Federal de Goi\'as, whose warm hospitality is greatly appreciated. \\

{\bf Data availability statement}
Data sharing not applicable to this article as no datasets were generated or analyzed during the current study.

\medskip

\end{document}